\newtheorem{defn}{Definition}[section]
\newtheorem{example}[defn]{Example}
\newtheorem{corollary}[defn]{Corollary}
\newtheorem{lemma}[defn]{Lemma}
\newtheorem{theorem}[defn]{Theorem}
\newenvironment{proof}[1][Proof]{\begin{trivlist}
\item[\hskip \labelsep {\bfseries #1}]}{\end{trivlist}}
\newenvironment{remark}[1][Remark]{\begin{trivlist}
\item[\hskip \labelsep {\bfseries #1}]}{\end{trivlist}}
\newcommand{\coord}{$(x^1, \ldots ,x^n)$}
\newcommand{\man}[1]{$\mathcal{#1}$}
\newcommand{\real}[1]{$\mathbb{R}^{#1}$}
\newcommand{\tpm}{$\mathcal{T}_p\mathcal{M}$}
\title{\huge{The Schwarzian Curvature}}
\author{\Large{Kambiz Fathi} \\[14cm] Department of Mathematics, Uppsala University, Uppsala, Sweden \\ \ \\UUMD Preprint 1998:P8 \\ \date{1998}}
\begin{document}
\selectlanguage{english}

\maketitle

\clearpage{\pagestyle{empty}\cleardoublepage}

\begin{abstract}
We start with introducing one of the most fundamental notions of differential geometry, Manifolds. We present some properties and constructions such as submanifolds, tangent spaces and the tangent map. Then we continue with introducing the real and complex projective space, and describe them from some different points of view. This part is finished by showing that $\mathbb{CP}^{n}$ is a Grassmannian manifold. At this stage we are ready to present the main subject of this thesis. \\
The Schwarzian curvature, usually seems to be an accidental by--product of the calculations, can be seen as a geometric interpretation of the Schwarzian derivative. Harley Flanders  \cite{Fl70} interpreted the Schwarzian derivative of a $C'''$ function as a curvature for curves in the projective line by using the moving frame method of \'{E}lie Cartan. The same argumentation was extended by Weiqi Gao \cite{G94} to obtain the Schwarzian curvatures for curves in higher dimensional projective spaces. \\
I have aimed to give a detailed presentation of Gao's work, where he presented the general formulas for the Schwarzian curvatures for curves in $\mathbb{CP}^{n}$ and gives some properties for the behaviour of the formulas, for example the transformation rules under change of coordinates. The Schwarzian curvatures for curves in $\mathbb{CP}$, $\mathbb{CP}^{2}$ and $\mathbb{CP}^{3}$ are calculated, and some examples are given.
\end{abstract}

\clearpage{\pagestyle{empty}\cleardoublepage}

\setcounter{page}{1}

\tableofcontents
%\listoffigures
%\newpage

\clearpage{\pagestyle{empty}\cleardoublepage}

\section{Introduction}
The notion of curvature is of great importance in the study of curves and various types of curvature are introduced in differential geometry. In this paper we concern about a specific type of curvature defined for analytic curves on a special type of manifold, called the projective space. For this purpose we construct a moving frame on curves in the complex projective space $\mathbb{CP}^{n}$ in terms of their liftings to the complex space $\mathbb{C}^{n+1} \setminus \{ 0 \}$. Then we define the Schwarzian curvatures, which can be seen as a geometric interpretation of the Schwarzian derivative, for curves in $\mathbb{CP}^{n}$ in terms of the normalized lifting and its derivatives.
%\begin{boxit}
\[
f^{(n+1)}= \kappa_{0} f + \ldots + \kappa_{n-1} f^{(n-1)}.
\]
%\end{boxit}
Where the $\kappa$'s denote the Schwarzian curvatures and $f$ denotes the normalized lifting from $\mathbb{CP}^{n}$ to $\mathbb{C}^{n+1} \setminus \{ 0 \}$.

We continue with proving the invariance of the Schwarzian curvatures under affine non--singular transformations and give a slight introduction to the relation between the $\kappa$'s and the structure of the curve $\Phi$ in $\mathbb{CP}^{n}$. We present the general formula for calculation of the Schwarzian curvatures for curves in $\mathbb{CP}^{n}$ and calculate those for cases of $n=1, 2, 3$. Finally we present the transformation rules for in $\mathbb{CP}$ and $\mathbb{CP}^{2}$ under change of coordinates in the domain of the curve.

Almost all of the work about the Schwarzian curvature is a more detailed presentation of Weiqi Gao's paper \cite{G94}, expanded by the calculation of the $\kappa$'s for curves in $\mathbb{CP}^{3}$.

\clearpage{\pagestyle{empty}\cleardoublepage}

\section{Manifolds}
Manifolds are generalizations of our intuitive ideas about curves and surfaces to arbitrary dimensional objects. A curve in the three--dimensional Euclidean space is parameterized locally by a single number $t$ as $(x(t),y(t),z(t))$, while two numbers parameterize a surface as $(x(u,v),y(u,v),z(u,v))$. A curve and a surface are considered locally homeomorphic to $\mathbb{R}$ and \real{2}, respectively. A manifold denoted by \man{M} is a topological space which is homeomorphic to \real{n} locally, it may be different from \real{n} globally. The local homeomorphism enables us to give a point in a manifold a set of $n$ numbers called local coordinates. If a manifold is not homeomorphic to \real{n} globally, we have to introduce several local coordinates. What we then require is that the transition from one coordinate to another is smooth, i.e. is of class $C^{\infty}$.\\

\begin{defn}
\label{sec:atlas}
Let $\mathcal{M}$ be a Hausdorff topological space. A family $\mathcal{U} = \{ (U_i , \varphi_{i})\}_{i \in I}$ is called a $C^{\infty}$--atlas of dimension $n$ on $\mathcal{M}$ if:
\begin{enumerate}
\item $\{U_{i}\}$ is an open covering of $\mathcal{M}$, i.e. the sets $U_{i}$ are open and $\mathcal{M} = \bigcup_{i\in I} U_{i}$ ;
\item each $\varphi_{i} : U_{i} \longrightarrow \mathbb{R}^{n}$ is a homeomorphism onto an open set $U'_{i} \subset \mathbb{R}^{n}$;
\item for any $i,j\in I$ such that $U_{i}\cap U_{j} \neq \emptyset$, the map
\[ \Psi_{ij} = \varphi_i \circ \varphi_{j}^{-1} : \varphi_{j}(U_{i} \cap U_{j}) \longrightarrow \varphi_{i}(U_{i} \cap U_{j})\]
is infinitely differentiable.
\end{enumerate}
The mappings $\Psi_{ij}$ are called coordinate transformations.
\end{defn}
Note that if \man{U} and $\mathcal{U}'$ are $C^{\infty}$--atlases on \man{M} then also $\mathcal{U} \cup \mathcal{U}'$ is a $C^{\infty}$--atlas on \man{M}.

\begin{defn}
Let \man{U} be an atlas on \man{M}. The completion of \man{U} is the family of pairs $( V,\psi )$ with the following properties.
\begin{enumerate}
\item $V$ is an open subset of \man{M};
\item $\psi : V\longrightarrow \mathbb{R}^{n}$ is a homeomorphism onto an open set $V' \subset \mathbb{R}^{n}$;
\item for any $( U,\varphi )\in \mathcal{U}$ such that $U\cap V \neq \emptyset$, the maps
\[ \varphi \circ \psi^{-1} : \psi (U\cap V)\longrightarrow \varphi (U\cap V) \]
and 
\[ \psi \circ \varphi^{-1} : \varphi (U\cap V)\longrightarrow \psi (U\cap V) \]
are infinitely differentiable.
\end{enumerate}
\end{defn}
It is not difficult to check that the completion of \man{U} is also an atlas on \man{M} and that this atlas is maximal in the sense of inclusion.

\begin{defn}
A Hausdorff topological space \man{M} with a $C^{\infty}$--atlas \man{U} is called a differentiable manifold. Elements of \man{U} are called charts on \man{M}. If $(U,\varphi)$ is a chart, then $U$ is called the chart domain and $\varphi$ a local coordinate system (in $U$).
\end{defn} 

Some variants of differentiable manifolds are $C^k$--manifolds, analytic manifolds and complex manifolds. $C^k$--manifolds and analytic manifolds are simply defined by replacing $C^\infty$--differentiability of the coordinate transformation by that of $C^k$--differentiability or $C^\omega$--differentiability, in the definition \ref{sec:atlas}. Even the definition of complex manifold is the same as that of differentiable manifold, except that the local homeomorphisms are required to map from open subsets of the space $\mathbb{C}^n$ (instead of $\mathbb{R}^n$), and the change of coordinates $\Psi_{ij}$ are required to be holomorphic instead of $C^\infty$--differentiable.\\
Since $\mathbb{C}^n =\mathbb{R}^{2n}$ we may regard a complex manifold as a real differentiable manifold whose real dimension is twice the complex dimension. Moreover a complex manifold carries a real analytic structure.\\
We do not require that $\mathcal{M}$ is $\mathbb{R}^n$ globally, but from definition \ref{sec:atlas} we see that $\mathcal{M}$ is locally carrying the Euclidean structure, i.e. in each coordinate neighborhood $U_i$, $\mathcal{M}$ looks like a subset of $\mathbb{R}^n$.

\begin{remark}
A point $p\in\mathcal{M}$ exists independently of its coordinates, thus the choice of coordinates is free. We denote the coordinates of a point $p$ by $( x^1(p),\ldots ,x^n(p))$, where $\varphi = (x^{1}, \ldots ,x^{n})$.
\end{remark}

\begin{example}
The Euclidean space $\mathbb{R}^n$ is of course a differentiable manifold.\\
Let $Id :\mathbb{R}^n \longrightarrow \mathbb{R}^n$ be the identity map. Then the $(\mathbb{R}^n,Id)$ constitutes an atlas for $\mathbb{R}^n$ all by itself.
\end{example}

\begin{remark}
The example above formalizes the fact that the notion of manifold is a generalization of the Euclidean space.
\end{remark}

\begin{example}
The circle $S^1$ is a one--dimensional real manifold. Take the unit circle written in $\mathbb{R}^2$ as $x^2 +y^2 = 1$, then we can choose the charts as follows:
\[
\left\{ \begin{array}{ll}
U_1 & =\{(x,y)\in S^1 : x>0\}\\
U_2 & =\{(x,y)\in S^1 : x<0\}\\
U_3 & =\{(x,y)\in S^1 : y>0\}\\
U_4 & =\{(x,y)\in S^1 : y<0\}
\end{array}
\right.
\]
\begin{figure}[H]
\centering
\epsfig{file=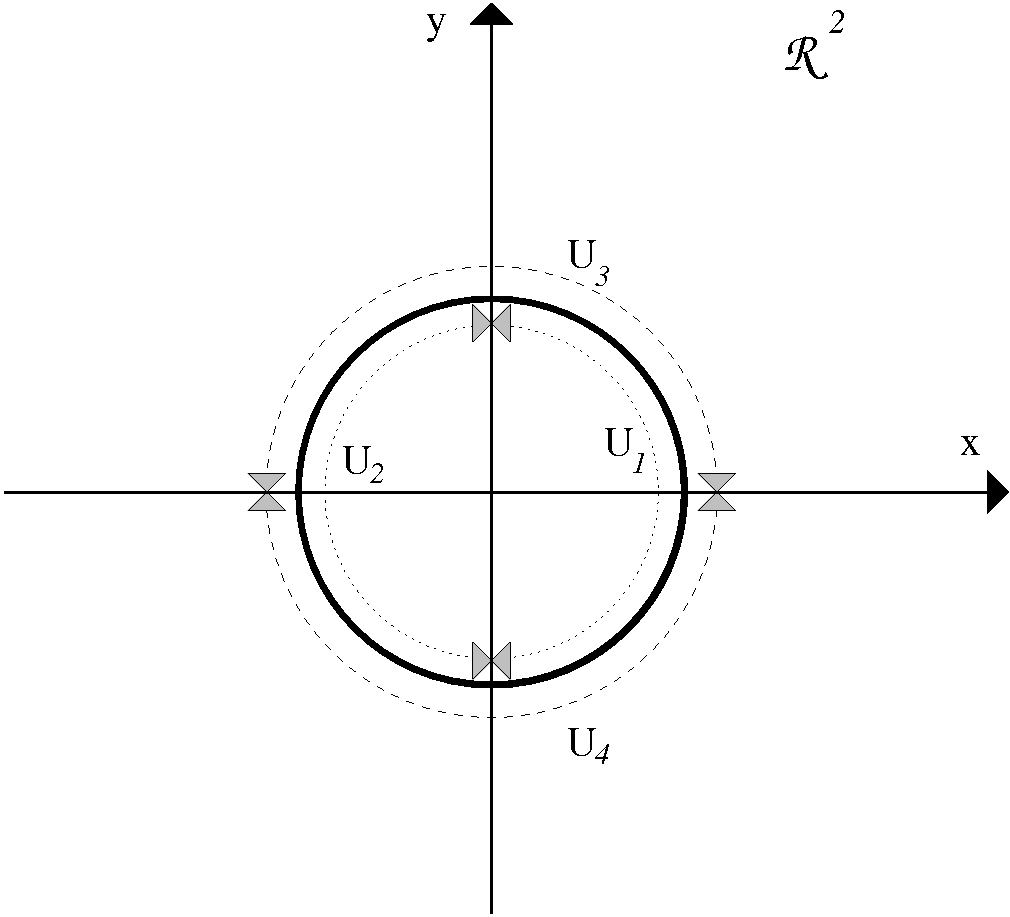, height=6cm}
\caption{The charts of the unit circle in $\mathbb{R}^2$.}
\label{fig:s1}
\end{figure}
From the figure we easily see that the homeomorphisms are:
\[
\left\{ \begin{array}{llll}
\varphi_1(x,y): U_1 \longrightarrow \mathbb{R} & =y  &\Longrightarrow & \varphi_1^{-1}(y)=(\sqrt{1-y^2},y)\\
\varphi_2(x,y): U_2 \longrightarrow \mathbb{R} & =-y &\Longrightarrow & \varphi_2^{-1}(y)=(-\sqrt{1-y^2},y)\\
\varphi_3(x,y): U_3 \longrightarrow \mathbb{R} & =x  &\Longrightarrow & \varphi_3^{-1}(x)=(x,\sqrt{1-x^2})\\
\varphi_4(x,y): U_4 \longrightarrow \mathbb{R} & =-x &\Longrightarrow & \varphi_4^{-1}(x)=(x,-\sqrt{1-x^2})
\end{array}
\right.
\]
Then the coordinate transformations for the overlapping charts can be calculated.
\[
\left\{ \begin{array}{llll}
\Psi_{13} &=\varphi_1\circ\varphi_3^{-1}(x) &=\varphi_1(x,\sqrt{1-x^2})    &=\sqrt{1-x^2}\\
\Psi_{14} &=\varphi_1\circ\varphi_4^{-1}(x) &=\varphi_1(x,-\sqrt{1-x^2})   &=-\sqrt{1-x^2}\\
\Psi_{23} &=\varphi_2\circ\varphi_3^{-1}(x) &=\varphi_2(x,\sqrt{1-x^2})    &=-\sqrt{1-x^2}\\
\Psi_{24} &=\varphi_2\circ\varphi_4^{-1}(x) &=\varphi_2(x,-\sqrt{1-x^2})   &=\sqrt{1-x^2}
\end{array}
\right.
\]
From above we see that they are $C^\infty$--functions. And thus we have found the charts and coordinate transformations for $S^1$.
\end{example}

\begin{example}
$S^2$ is a complex manifold which is identified with the Riemann sphere $\mathbb{C}\cup\{\infty\}$.\\
The stereographic coordinates of a point $P(x,y,z)\in S^2\setminus\{ \mbox{ North Pole }\}$ projected from the north pole are:
\[ (X,Y)=\left( \frac{x}{1-z}, \frac{y}{1-z}\right). \]
While those of a point $P(x,y,z)\in S^2\setminus\{ \mbox{ South Pole }\}$ projected from the south pole are:
\[ (U,V)=\left( \frac{x}{1+z}, \frac{-y}{1+z}\right). \]

\begin{figure}[H]
\centering
\epsfig{file=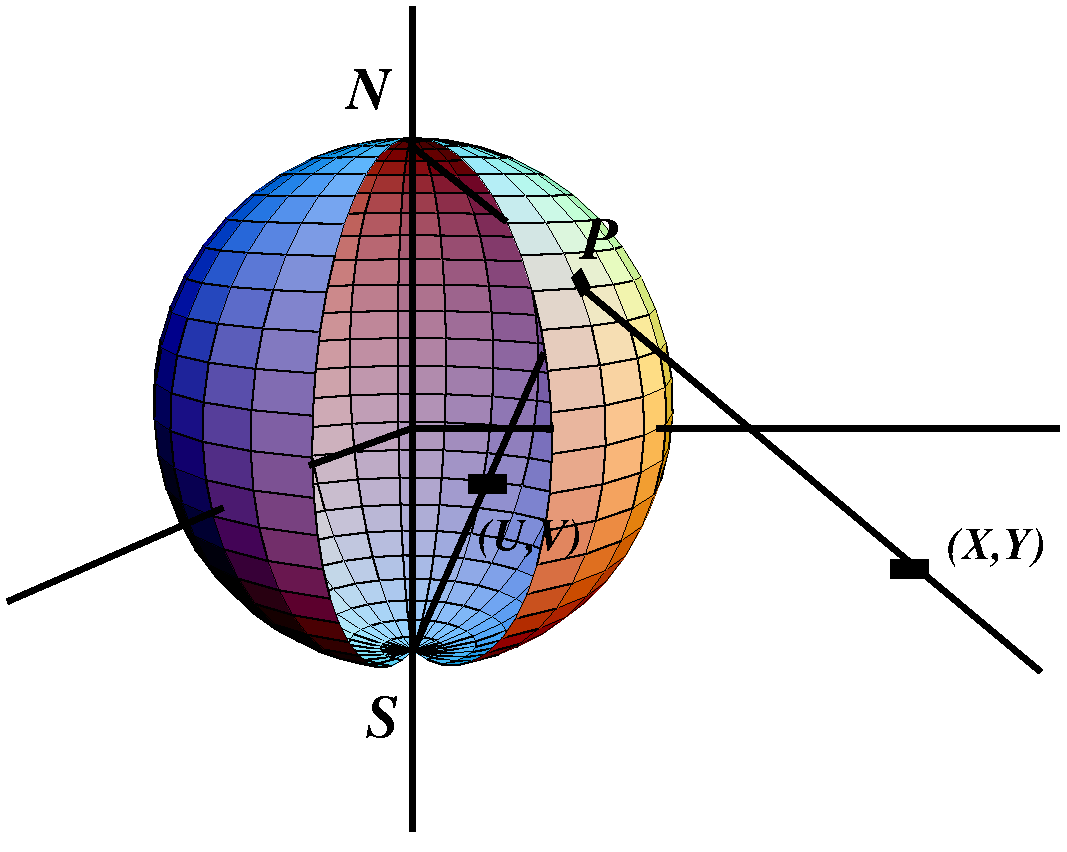, height=7cm}
\caption{A point $P$ projected from the south and the north pole.}
\label{fig:riemann}
\end{figure}

Let us define complex coordinates as 
\[Z=X+iY\mbox{ , } \overline{Z}=X-iY\mbox{ , }  W=U+iV\mbox{ , }  \overline{W}=U-iV.\]
Then 
\begin{align*}
Z &=\frac{x+iy}{1-z}=\frac{1+z}{1-z}\cdot\frac{x+iy}{1+z}=\frac{1+z}{1-z}(U-iV)\\
&=\frac{(1+z)(1+z)}{(1-z)(1+z)}(U+iV)=\frac{(1+z)^2}{1-z^2}(U+iV)\\
\intertext{Since we are on the unit sphere $x^2+y^2+z^2=1$ we have $1-z^2=x^2+y^2$ and hence we get:}
&=\frac{(1+z)^2}{x^2+y^2}(U+iV)=\frac{U+iV}{\frac{x^2+y^2}{(1+z^2)}}=\frac{U+iV}{U^2+V^2}=\frac{\overline{W}}{\overline{W} W}=\frac{1}{W}.
\end{align*}
Now we have shown that $Z=1/W$, thus $Z$ is a holomorphic function of $W$. Thus $S^2$ is a complex manifold which is identified with the Riemann sphere.
\end{example}

\begin{example}
The n--dimensional sphere $S^n$ is a differentiable manifold.\\
The sphere $S^n$ is realized in $\mathbb{R}^{n+1}$ as
\[ \sum_{i=0}^{n} (x^i)^2 = 1.\]
Introduce the coordinate neighborhoods

\begin{align*}
U_{i+} &=\{ (x^0, \ldots ,x^n)\in S^n : x^i > 0\}\\
U_{i-} &=\{ (x^0, \ldots ,x^n)\in S^n : x^i < 0\}.\\
\intertext{Define the coordinate maps $ \varphi_{i+} : U_{i+}\longrightarrow\mathbb{R}^n $ and $ \varphi_{i-} : U_{i-}\longrightarrow\mathbb{R}^n $ by}
\varphi_{i+}(x^0,\ldots ,x^n) &= (x^0,\ldots , x^{i-1},x^{i+1},\ldots , x^n)\\
\varphi_{i-}(x^0,\ldots ,x^n) &= (x^0,\ldots , x^{i-1},x^{i+1},\ldots , x^n)\\
\intertext{Note that the domains of $\varphi_{i+}$ and $\varphi_{i-}$ are different, and $\varphi_{i\pm}$ are projections of the hemisphere $U_{i\pm}$ to the plane $x^i =0$.}
\intertext{By $\sqrt{\ldots}$ we mean \[\sqrt{1-(x^{0})^2-\ldots -(x^{i-1})^2-(x^{i+1})^2-\ldots - (x^n)^2}.\] 
Then the inverses of the coordinate maps are:}
\varphi_{i+}^{-1}(x^0,\ldots , x^{i-1},x^{i+1},\ldots , x^n) &=(x^0, \ldots ,x^{i-1}, \sqrt{\ldots}\;, x^{i+1},\ldots , x^n)\\
\varphi_{i-}^{-1}(x^0,\ldots , x^{i-1},x^{i+1},\ldots , x^n) &=(x^0, \ldots ,x^{i-1},-\sqrt{\ldots}\;, x^{i+1},\ldots , x^n)\\
\intertext{Then the coordinate transformations for the charts $U_i$ and $U_j$ such that $U_i \cap U_j \neq \emptyset$ can be written as:}
\Psi_{i+j+}
 &= \varphi_{i+} \circ \varphi_{j+}^{-1}(x^0,\ldots , x^{i-1},x^{i+1},\ldots , x^n)\\
 &=\varphi_{i+}(x^0,\ldots ,x^{i-1}, \sqrt{\ldots}\;, x^{i+1},\ldots ,x^n)\\
 &=(\ldots ,x^{i-1},x^{i+1},\ldots ,x^{j-1},\sqrt{\ldots}\;, x^{j+1},\ldots )\\
 & \\
\Psi_{i+j-} 
 &= \varphi_{i+} \circ \varphi_{j+}^{-1}(x^0,\ldots , x^{i-1},x^{i+1},\ldots , x^n)\\
 &=\varphi_{i+}(x^0,\ldots ,x^{i-1},-\sqrt{\ldots}\;, x^{i+1},\ldots ,x^n)\\
 &=(\ldots ,x^{i-1},x^{i+1},\ldots ,x^{j-1},-\sqrt{\ldots}\;, x^{j+1},\ldots )
\end{align*}
And similarly we can obtain the different coordinate transformation functions and see that they are differentiable on the intersection of the charts, and hence $S^n$ is a differentiable manifold.
\end{example}

We all know how important it is to be able to construct subspaces in ordinary linear algebra. The fact that one can ``sort'' a space in parts of smaller dimensions, is rather trivial when we deal with Euclidean geometry. Take for example the ``three--dimensional'' sphere. It is quite easy to convince anyone that we can cut it into ``two--dimensional'' circles, which we describe by a plane structure. The same procedure should be valid for manifolds. We should be able to describe a part of an $n$--dimensional manifold as an $k$--dimensional submanifold. One way of looking at submanifolds is considering a $k$--dimensional submanifold of an $n$--dimensional manifold \man{M} as a subset \man{K} of \man{M} which is a $k$--dimensional manifold in the induced topology. We will now try to define submanifolds formally. For this purpose we first introduce a valuable tool, The Rank Theorem.

\begin{theorem}
\textbf{(The Rank Theorem)} Let $\Omega \subset \mathbb{R}^{m}$ be open and $f \in C^{k}(\Omega , \mathbb{R}^{n})$ i.e. $f:\Omega \longrightarrow \mathbb{R}^{n}$ be a $C^{k}$--map. Suppose that $rank \; d_{a} f =r$, $(\forall a \in \Omega )$. Then $\forall a \in \Omega $ $\exists $ a neighborhood $U$ of $a$, $\exists $ a neighborhood $V$ of $f(a)$, $\exists $ a neighborhood $G$ of $0 \in \mathbb{R}^{m} $, $\exists $ a neighborhood $D$ of $0 \in \mathbb{R}^{n}$. Furthermore, there exists diffeomorphisms $u:G\longrightarrow U$ and $v:V\longrightarrow D$ such that for every set $(x_{1},\ldots ,x_{n})\in G$
\[
(v \circ f \circ u)(x_{1},\ldots ,x_{n})=(x_{1},\ldots ,x_{r},0,\ldots ,0).
\]

\begin{figure}[H]
\centering
\epsfig{file=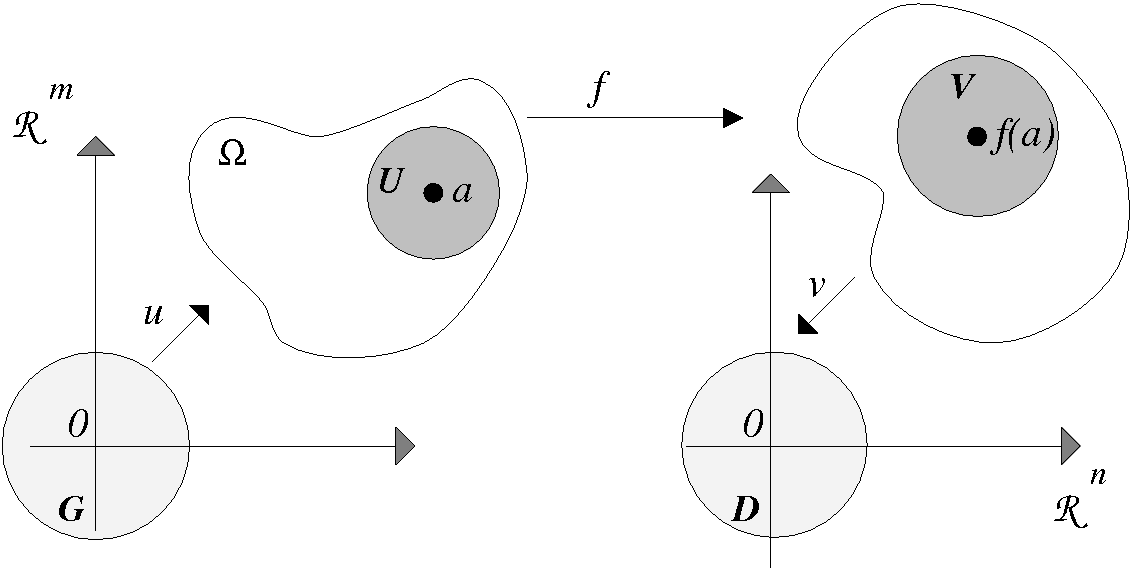, height=6cm , width=11cm}
\caption{A visualization of the mappings in The Rank Theorem.}
\label{fig:rank}
\end{figure}
\end{theorem}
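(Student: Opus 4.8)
The plan is to prove the Rank Theorem by \emph{straightening out} $f$ in two successive changes of coordinates, each produced by the Inverse Function Theorem. Since the whole statement is local, I would first reduce to a normalized situation: composing with translations in source and target, I may assume $a=0$ and $f(a)=0$. The neighborhoods $G,D$ of the origin and $U,V$ then arise automatically as the domains and images on which the two constructed maps turn out to be diffeomorphisms. Next I normalize the position of the invertible block: because $\operatorname{rank} d_0 f = r$, some $r\times r$ minor of the Jacobian is nonzero, and after permuting the coordinates of $\mathbb{R}^m$ and of $\mathbb{R}^n$ I may assume it is the top-left block. Writing $x=(x',x'')\in\mathbb{R}^r\times\mathbb{R}^{m-r}$ and $f=(f',f'')$ with $f'=(f_1,\dots,f_r)$, this means $\bigl(\partial f_i/\partial x_j\bigr)_{1\le i,j\le r}$ is invertible at $0$.

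The first change of coordinates is in the source. I would define $\varphi(x)=(f_1(x),\dots,f_r(x),x_{r+1},\dots,x_m)$, whose Jacobian at $0$ is the block upper-triangular matrix $\begin{pmatrix}\partial f'/\partial x' & \partial f'/\partial x''\\ 0 & I_{m-r}\end{pmatrix}$, hence invertible. By the Inverse Function Theorem $\varphi$ is a $C^{k}$-diffeomorphism of a neighborhood of $0$ onto its image; I set $u=\varphi^{-1}$. By construction the first $r$ components of $f$ become the first $r$ coordinates, so $f\circ u$ takes the form $(y_1,\dots,y_r,g_{r+1}(y),\dots,g_n(y))$ for suitable $C^{k}$-functions $g_{r+1},\dots,g_n$.

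The crucial step, and the one I expect to be the genuine obstacle, is to exploit that the rank equals $r$ on the \emph{entire} neighborhood rather than at a single point. Since $u$ is a diffeomorphism, $d(f\circ u)$ still has rank $r$ everywhere, and its Jacobian is $\begin{pmatrix} I_r & 0\\ \partial g''/\partial y' & \partial g''/\partial y''\end{pmatrix}$. A rank of exactly $r$ at every point forces the lower-right block to vanish, i.e.\ $\partial g''/\partial y''\equiv 0$, so that $g_{r+1},\dots,g_n$ depend only on $(y_1,\dots,y_r)$. This is precisely where the constant-rank hypothesis, as opposed to a pointwise rank condition, is indispensable, and care is needed to justify that identically-$r$ rank really kills that block on a connected neighborhood.

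Finally I perform the second change of coordinates, in the target. Setting $z'=(z_1,\dots,z_r)$, I would define $\psi(z)=(z_1,\dots,z_r,\,z_{r+1}-g_{r+1}(z'),\dots,z_n-g_n(z'))$, whose Jacobian $\begin{pmatrix} I_r & 0\\ -\partial g''/\partial z' & I_{n-r}\end{pmatrix}$ has determinant $1$, so $\psi$ is a diffeomorphism; take $v=\psi$. Using that the $g_j$ depend only on $y'$, a direct substitution gives $(v\circ f\circ u)(y)=(y_1,\dots,y_r,0,\dots,0)$, the asserted normal form. It then remains only to shrink $U,V,G,D$ so that all four maps are diffeomorphisms onto their images and every composition is defined, which completes the argument.
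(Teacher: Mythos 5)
Your proposal is correct and follows essentially the same route as the paper's proof: the source-side straightening via $\varphi(x)=(f_1,\dots,f_r,x_{r+1},\dots,x_m)$ is the paper's map $h$, the constant-rank argument killing $\partial g''/\partial y''$ is the paper's step showing $g_{r+1},\dots,g_n$ are independent of the last $m-r$ variables, and your target-side map $\psi$ coincides with the paper's $v$. The only differences are cosmetic (you permute coordinates where the paper pre-normalizes $d_0f$, and you subtract $g_j(z')$ where the paper writes $g_j(z_1,\dots,z_r,0,\dots,0)$ — the same function once independence is established).
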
 

\begin{proof}
We may suppose that $a=f(a)=0$ and that $d_{0}f$ is the map $(v_{1},\ldots ,v_{n}) \longmapsto (v_{1},\ldots ,v_{r},0,\ldots ,0)$ which satisfies 

\begin{equation}
\label{eq:detfunctionalmatrix}
det \, \left[ \frac{\partial f_{i}}{\partial x_{j}}(0)\right]_{1\leq i,j\leq r}\neq 0.
\end{equation}
Then we define the $C^{k}$--map $h:\Omega \longrightarrow \mathbb{R}^{m}$ by $h(x)=(f_{1}(x),\ldots ,f_{r}(x),x_{r+1},\ldots x_{m})$ where $x \in \Omega$ and $f(x) = (f_{1}(x),\ldots ,f_{r}(x),\ldots f_{n}(x))$. Then the Jacobi matrix of $h$ is the $(m \times m)$ matrix
\[
d_{0}h = \left[ \frac{\partial h_{i}}{\partial x_{j}}(0) \right] 
= 
\left[ \begin{array}{ll}
\left[ \frac{\partial f_{j}}{\partial x_{k}}(0) \right]_{ 1\leq j \leq r \atop 1\leq k \leq m} &\left\{ \begin{array}{l} \mbox{(r)--rows} \\ \mbox{(m)--columns} \end{array} \right. \\
 \\
\underbrace{\begin{array}{ccc} 0 & \ldots & 0 \\ \vdots &  & \vdots \\ 0 & \ldots & 0 \end{array}}_{\mbox{(r)--columns}} & \underbrace{\begin{array}{ccc} 1 & \ldots & 0 \\ \vdots & \ddots & \vdots \\ 0 & \ldots & 1 \end{array}}_{\mbox{(m-r)--columns}}
\end{array} \right].
\]
Since the determinant in equation (\ref{eq:detfunctionalmatrix}) is equal to zero, we conclude that the Jacobi matrix of $h$ has full rank, that is $det \, d_{0}h \neq 0$. \\
Then by the Inverse Mapping Theorem $\exists$ $U'$ and $G'$ neighborhoods of $0 \in \mathbb{R}^{m}$ such that $h|_{U'}$ is a $C^{k}$--diffeomorphic map from $U'$ to $G'$. By the same theorem we know that ${h^{-1}|_{G'}}$ is a $C^{k}$--diffeomorphism. Moreover, for all $y\in G'$ we have

\begin{align}
\label{eq:hinvers}
h\circ h^{-1}(y) & = (h_{1}(h^{-1}(y)),\ldots ,h_{m}(h^{-1}(y))) \nonumber \\
 & =(f_{1}(h^{-1}(y)),\ldots ,f_{r}(h^{-1}(y)),y_{r+1},\ldots ,y_{m}), 
\end{align} 
and hence $(y_{1},\ldots ,y_{r})= (f_{1}(h^{-1}(y)),\ldots ,f_{r}(h^{-1}(y)))$. \\
Now we define $g=f \circ h^{-1}$, and comparing to (\ref{eq:hinvers}) we get

\[
g(x)= (x_{1},\ldots ,x_{r},g_{r+1}(x),\ldots ,g_{n}(x))
\]
where $g_{r+1},\ldots ,g_{n}\in C^{k}(G')$. We can express the Jacobi matrix of $g$ at $0$ as the following $(n\times m)$ matrix
\[
d_{0}g  = \left[ \begin{array}{ll}
\begin{array}{ccc} 1  & \ldots & 0 \\ \vdots & \ddots   & \vdots \\ 0 & \ldots & 1 \end{array}  & \begin{array}{ccc} 0 & \ldots & 0 \\ \vdots &  & \vdots \\ 0 & \ldots & 0 \end{array}\\
 \\
\begin{array}{ccc} \ast & \ldots & \ast \\ \vdots &  & \vdots \\ \ast & \ldots & \ast \end{array}  &  \left[ \frac{\partial g_{i}}{\partial x_{j}}(0) \right]_{r+1 \leq i \leq n \atop r+1 \leq j \leq m}
\end{array} \right],
\]
where the blocks have the following sizes:
\begin{figure}[H]
\centering
\epsfig{file=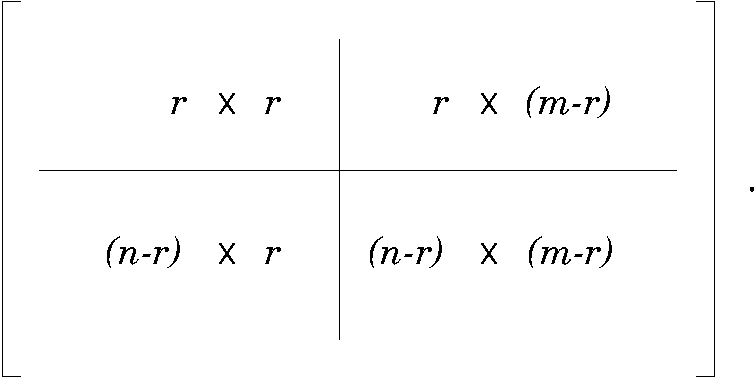, height=3cm , width=6cm}
\end{figure}
Since we know that $rank \, d_{a}f =r$ and $g=f\circ h^{-1}$, we conclude that $rank\, d_{x}g = \, rank\, d_{h^{-1}}f =r$ for all $x\in G'$ and therefore we obtain that the Jacobi matrix of $g$ at $x$ is equal to zero, so $g_{r+1},\ldots ,g_{n}$ are independent of $x_{r+1},\ldots ,x_{m} \in G'$. \\
We let $G'\subset \mathbb{R}^{m}$ have the following structure
\[
G'\; = \; {G'}^{r} \, \times \, {G'}^{m-r},
\]
where ${G'}^{r}$ and ${G'}^{m-r}$ are open neighborhoods of $0$ in \real{r} and \real{m-r} respectively. Then we define the map
\begin{align*}
v(y_{1},\ldots ,y_{n}) & =(y_{1},\ldots ,y_{r}, y_{r+1}-g_{r+1}(y_{1},\ldots ,y_{r},0,\ldots ,0),\ldots  \\
                       & \qquad , y_{n}-g_{n}(y_{1},\ldots ,y_{r},0,\ldots ,0)).
\end{align*}
This makes $g_{r+1},\ldots ,g_{n}$ disappear when we compose it with $f\circ h^{-1}$. The Jacobi matrix of $v$ at $0$ is the following $(n \times n)$ matrix

\[
d_{0}v  = \left[ \begin{array}{cc}
\begin{array}{ccc} 1 & \ldots & 0 \\ \vdots & \ddots & \vdots \\ 0 & \ldots & 1 \end{array} &        \begin{array}{ccc} 0 & \ldots & 0 \\ \vdots &  & \vdots \\ 0 & \ldots & 0 \end{array} \\
 \\ 
\begin{array}{ccc} \ast & \ldots & \ast \\ \vdots &  & \vdots \\ \ast & \ldots & \ast \end{array} &  \begin{array}{ccc} 1 & \ldots & 0 \\ \vdots & \ddots  & \vdots \\ 0 & \ldots & 1 \end{array} 
\end{array} \right], 
\]
where the blocks have the following sizes:
\begin{figure}[H]
\centering
\epsfig{file=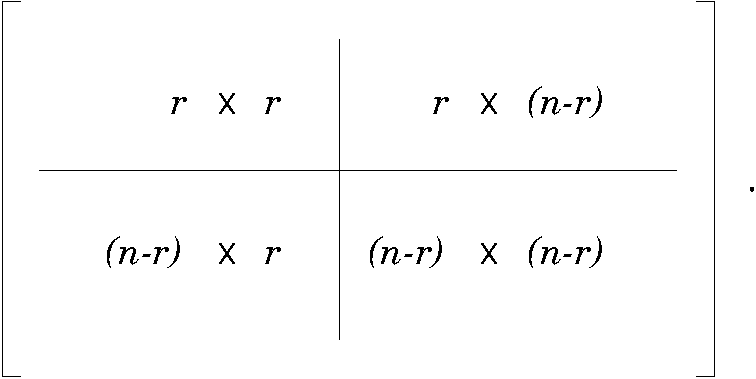, height=3cm , width=6cm}
\end{figure}
Applying the Inverse Mapping Theorem, we see that $\exists $ a neighborhood $V$ of $0 \in \mathbb{R}^{n}$ and $\exists $ a neighborhood $D$ of $0 \in \mathbb{R}^{n}$ such that $v|_{V} : V \longrightarrow D$ is a $C^{k}$--diffeomorphism for a neighborhood $G$ of $0 \in \mathbb{R}^{n}$ such that $G \subset G'$ and $g(G) \subset V$. We define $U=h^{-1}(G)$ and $u=h^{-1}|_{G}$, then $u:G\longrightarrow U$ is a $C^{k}$--diffeomorphism. Then for $x\in G$ we have 
\begin{align*}
v\circ f\circ u (x) & = v\circ f\circ h^{-1} (x)=v(g(x)) \\
                    & = v(x_{1},\ldots ,x_{r},g_{r+1}(x),\ldots ,g_{n}(x)) \\
                    & = v(x_{1},\ldots ,x_{r},g_{r+1}(x)-g_{r+1}(x_{1},\ldots ,x_{r},0,\ldots ,0),\ldots \\
                    & \qquad ,g_{n}(x)-g_{n}(x_{1},\ldots ,x_{r},0,\ldots ,0)).
\end{align*}
But $g_{r+1}(x),\ldots ,g_{n}(x)$ are independent of $x_{r+1},\ldots ,x_{n}$ in $G'$ which implies that this holds also in $G \subset G'$, and thus
\[
v\circ f\circ u (x)\, = \, v\circ f\circ u (x_{1},\ldots ,x_{n})\, = \, (x_{1},\ldots , x_{r},0,\ldots ,0). \quad \blacksquare
\] 
\end{proof}

\begin{theorem}
\label{thm:subman}
Let $k\in \mathbb{N}$ such that $1 \leq k \leq n$, and let $\mathcal{M}\subset \mathbb{R}^{n}$. Then the following conditions are equivalent.
\begin{description}
\item[\textbf{(1)  }] $\forall a \in \mathcal{M} \, \exists$ a neighborhood $U$ of $a$ and a function $f\in C^{\infty}(U,\mathbb{R}^{n-k})$ such that $rank \, d_{a}f =n-k$ and $U \cap \mathcal{M} = f^{-1}(0)$.
\item[\textbf{(2)  }] $\forall a \in \mathcal{M} \, \exists$ a neighborhood $U$ of $a$, an open set $V\subset \mathbb{R}^{n}$ and a diffeomorphism $\phi :U\longrightarrow V$ such that $\phi (U\cap \mathcal{M}) = V\cap (\mathbb{R}^{k} \times \{0 \})$.   
\item[\textbf{(3)  }] $\forall a \in \mathcal{M} \, \exists$ a neighborhood $U$ of $a$, an open set $W\subset \mathbb{R}^{k}$, a function $\psi \in C^{\infty}(W,\mathbb{R}^{n})$ such that 
\[
rank \, d_{\psi^{-1}(a)}\psi =k,
\]
and $\psi : W \longrightarrow \mathcal{M} \cap U$ is bijective and $\psi^{-1}$ is continuous.
\item[\textbf{(4)  }] After permuting the variables in $\mathbb{R}$, locally, \man{M} is the graph of a $C^{\infty}$--mapping from an open subset of $\mathbb{R}^{k}$ into $\mathbb{R}^{n-k}$.
\end{description}
\end{theorem}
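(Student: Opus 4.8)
My plan is to prove the cyclic chain of implications $(1)\Rightarrow(2)\Rightarrow(3)\Rightarrow(4)\Rightarrow(1)$, which establishes the equivalence of all four conditions. The only serious analytic input, the passage from a locally defining submersion to a flattening chart, will be handled by the Rank Theorem proved above; everything else reduces to the Inverse Mapping Theorem plus one genuinely topological observation. For $(1)\Rightarrow(2)$ I would argue as follows. Given $f\in C^{\infty}(U,\mathbb{R}^{n-k})$ with $rank\, d_{a}f=n-k$ and $U\cap\mathcal{M}=f^{-1}(0)$, observe that $n-k$ is the maximal possible rank of $d_{x}f$, so by continuity of the relevant $(n-k)\times(n-k)$ minor the rank is constantly $n-k$ on a smaller neighborhood of $a$. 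I then apply the Rank Theorem with source dimension $n$, target dimension $n-k$, and $r=n-k$, getting diffeomorphisms $u$ and $v$ with $v\circ f\circ u(x_{1},\ldots,x_{n})=(x_{1},\ldots,x_{n-k})$. Since $v$ is a diffeomorphism fixing the origin, $f(y)=0$ exactly when the first $n-k$ coordinates of $u^{-1}(y)$ vanish, so $u^{-1}$ carries $U\cap\mathcal{M}$ onto $\{0\}\times\mathbb{R}^{k}$. Composing $u^{-1}$ with the coordinate permutation that swaps the two blocks produces the required $\phi$ with $\phi(U\cap\mathcal{M})=V\cap(\mathbb{R}^{k}\times\{0\})$.

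For $(2)\Rightarrow(3)$ I would set $W=\{w\in\mathbb{R}^{k}:(w,0)\in V\}$, which is open, and define $\psi(w)=\phi^{-1}(w,0)$. This $\psi$ is smooth, maps $W$ bijectively onto $\mathcal{M}\cap U$, and its inverse is the restriction to $\mathcal{M}\cap U$ of the first $k$ components of $\phi$, hence continuous. Because $\phi^{-1}$ is a diffeomorphism and $w\mapsto(w,0)$ has rank $k$, the chain rule gives $rank\, d_{w}\psi=k$ everywhere, which is precisely (3).

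The implication $(3)\Rightarrow(4)$ is where I expect the main obstacle to lie. Writing $w_{0}=\psi^{-1}(a)$, the $n\times k$ matrix $d_{w_{0}}\psi$ has rank $k$, so after permuting the coordinates of $\mathbb{R}^{n}$ I may assume its top $k\times k$ block is invertible. Splitting $\psi=(\psi',\psi'')$ with $\psi':W\to\mathbb{R}^{k}$, the Inverse Mapping Theorem makes $\psi'$ a diffeomorphism of a neighborhood of $w_{0}$ onto an open $\Omega\subset\mathbb{R}^{k}$ with smooth inverse $\eta$; then $\psi(w)=(\psi'(w),\psi''(w))=(x,h(x))$ with $x=\psi'(w)$ and $h:=\psi''\circ\eta$, which displays the image as a graph. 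The delicate step is to confirm that this local graph really is \emph{all} of $\mathcal{M}$ near $a$, and not merely a sliver of the parametrized image. This is exactly where the hypothesis that $\psi^{-1}$ is continuous must be used: it forces $\psi$ to be a homeomorphism onto $\mathcal{M}\cap U$, so the image of a small ball around $w_{0}$ is open in the subspace topology and therefore equals $\mathcal{M}\cap U\cap O$ for some open $O\subset\mathbb{R}^{n}$. Shrinking $U$ to $O$, I conclude that $\mathcal{M}$ is locally the graph of $h$. I would stress that an injective immersion alone is insufficient here; the embedding condition is essential.

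Finally, for $(4)\Rightarrow(1)$, after the permutation I have $\mathcal{M}\cap U=\{(x,h(x)):x\in\Omega\}$ for an open $\Omega\subset\mathbb{R}^{k}$ and a smooth $h:\Omega\to\mathbb{R}^{n-k}$. I define $f(x,y)=y-h(x)$ on $(\Omega\times\mathbb{R}^{n-k})\cap U$. Then $f^{-1}(0)=U\cap\mathcal{M}$, and the Jacobian $d f=[\,-d h \mid I_{n-k}\,]$ visibly has rank $n-k$ at every point, which gives (1) and closes the cycle. The remaining verifications—smoothness of the various composites and the openness of the auxiliary sets $W$ and $\Omega$—are routine and I would only indicate them briefly.
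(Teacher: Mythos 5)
Your proof is correct and follows the same cyclic scheme $(1)\Rightarrow(2)\Rightarrow(3)\Rightarrow(4)\Rightarrow(1)$ as the paper, with the same tools in the same places: the Rank Theorem (with $r=n-k$) for $(1)\Rightarrow(2)$, the inclusion $w\mapsto(w,0)$ composed with $\phi^{-1}$ for $(2)\Rightarrow(3)$, and the defining function $f(x,y)=y-h(x)$ for $(4)\Rightarrow(1)$. Where you genuinely diverge is $(3)\Rightarrow(4)$: the paper disposes of this step in two sentences (``since we can do the permutation as we please, there will not be any complication''), which is not a proof, whereas you give the standard complete argument --- invert the $k\times k$ block of $d_{w_{0}}\psi$ via the Inverse Mapping Theorem to write the image locally as the graph of $h=\psi''\circ\eta$, and then invoke the continuity of $\psi^{-1}$ to guarantee that this graph exhausts $\mathcal{M}$ near $a$ rather than being a proper sliver of the parametrized image. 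Your observation that an injective immersion alone would not suffice is exactly the point the paper's version misses. Two further small improvements over the source: you justify the local constancy of $rank\, d_{x}f$ in $(1)\Rightarrow(2)$ by continuity of a maximal minor (the paper merely asserts constant rank), and your computation $df=[\,-dh\mid I_{n-k}\,]$ in $(4)\Rightarrow(1)$ replaces the paper's rather confused rank justification. I see no gaps in your argument.
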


\begin{proof}
\begin{description}
\item[$\mathbf{1 \Longrightarrow 2  }$]  We know that the Jacobi matrix of $f$ has constant rank equal to $n-k$ in a neighborhood of $a\in \mathcal{M}$. We may suppose that $U$ is this this neighborhood. Then by The Rank Theorem we can find diffeomorphisms $u$ and $v$ such that
\[
(v\circ f\circ u)(x_{1},\ldots ,x_{n})=(0,\ldots ,0,x_{k+1},\ldots ,x_{n}),
\]
which can be seen as the projection onto the $n-k$ dimensional subspace of $\mathbb{R}^{n}$, in a neighborhood of $0$. We put $u(0)=a$. Then we let $U'\subset U$ be the range of $u$ and define $\phi = u^{-1}$. Illustrating what we have accomplished so far, hopefully gives a better understanding.

\begin{figure}[H]
\centering
\epsfig{file=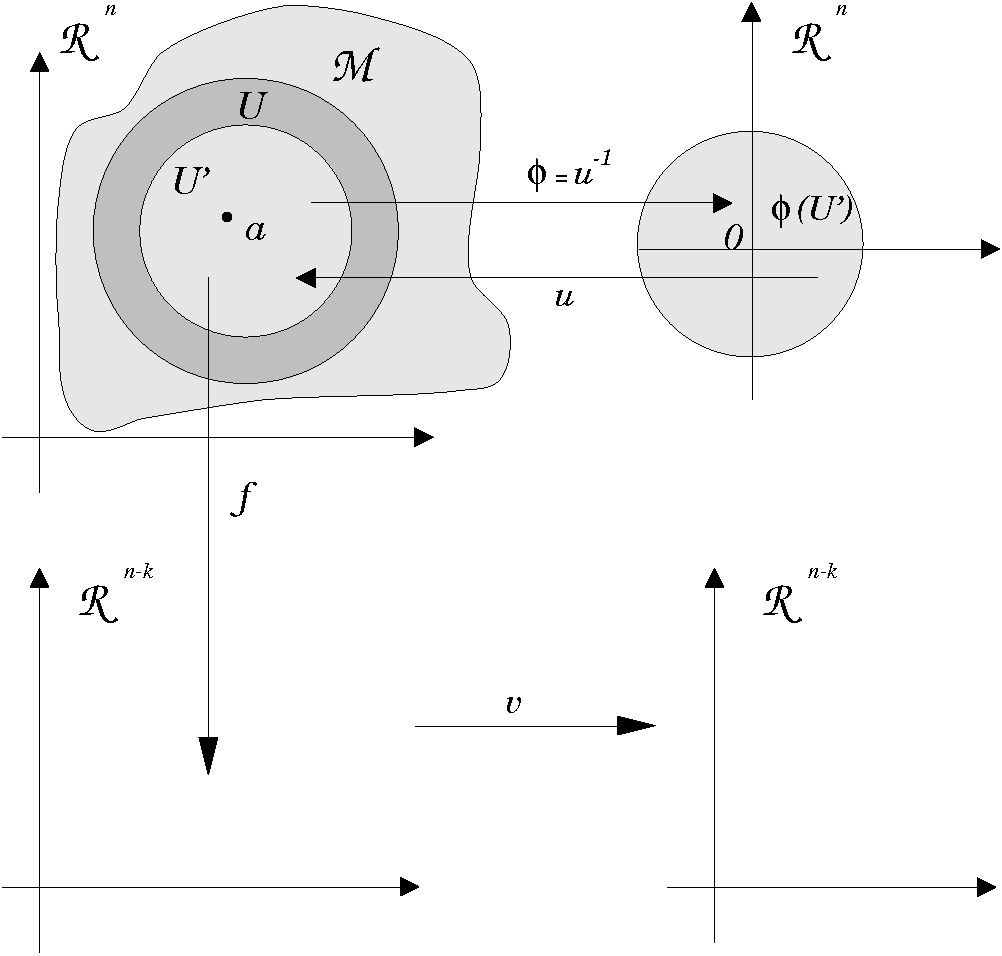, height=10cm, width=12cm}
\caption{An illustration of the mappings presented above.}
\end{figure}
According to the assumption in (1), we get
\begin{equation}
\label{eq:fiUnM}
\phi (U' \cap \mathcal{M}) = \phi (U' \cap (U \cap \mathcal{M}))=\phi (U' \cap f^{-1}(0))= \phi (U')\cap \phi(f^{-1}(0)).
\end{equation}
We also know that $f^{-1}(a')$ is a mapping from $a' \in \mathbb{R}^{n-k}$ into $\mathbb{R}^{n}$ and v is a mapping from a neighborhood of $0\in \mathbb{R}^{n-k}$ to a neighborhood of $a'$. Since we have $a'=0$ in equation (\ref{eq:fiUnM}), we can write $(v\circ f)^{-1}(0)$ instead of $f^{-1}(0)$ and then we get
\[
\phi (U' \cap \mathcal{M})= u^{-1}(v\circ f^{-1})(0) \cap \phi (U').
\]
Using the composition law and the fact that $V$ is equal to $\phi (U')$, we get 
\[
\phi (U' \cap \mathcal{M})=V\cap (v\circ f\circ u^{-1})(0)=V\cap (\mathbb{R}^{k} \times \{ 0 \}),
\]
and thus we have constructed (2) from (1).
\item[$\mathbf{2 \Longrightarrow 3  }$]  Set $I(x)=(x,0)$, then $I:\mathbb{R}^{k} \longrightarrow \mathbb{R}^{n}$. This enables us to look at \real{k} from \real{n}. Then we define $W=I^{-1}(V\cap (\mathbb{R}^{k} \times \{0 \}))$ where $V$ is as in (2), and $\psi = \phi^{-1} \circ I$ where $\phi$ is also as in (2) which implies that $\phi^{-1} : V \longrightarrow U$ and $\psi : W \longrightarrow \mathbb{R}^{n}$which is bijective. Note that since $\phi$ and $I$ are diffeomorphic, we find out that $\psi^{-1}=I^{-1}\circ \phi$ is continuous. \\
 \\
\item[$\mathbf{3 \Longrightarrow 4  }$]  Split \real{n} into \real{k} and \real{n-k} and let $\psi$ map \real{k} into the \real{n-k} part of \real{n}, since we can do the permutation as we please, there will not be any complication. Then \man{M} is the graph of the $C^{\infty}$--mapping $\psi$.
\item[$\mathbf{4 \Longrightarrow 1  }$]  According to (4), $\mathcal{M}$ is the graph of a $C^{\infty}$--mapping from \real{k} into \real{n-k}. Thus we can formally write
\[
\mathcal{M}= \{ (x,F(x)): x\in \mathcal{E} \subset \mathbb{R}^{k} \mbox{   where $\mathcal{E}$ is the domain of $F(x)$.} \}
\]
Then we start with looking at \real{n} as $\mathbb{R}^{k} \times \mathbb{R}^{n-k}$ and consider open subsets $U\subset \mathbb{R}^{k}$ and $V\subset \mathbb{R}^{n-k}$, hence we can say that$F\in C^{\infty}(U,V)$ and
\[
\mathcal{M} \cap (U\times V)=\mbox{graph }F.
\]
Then we define $f(x,y)=y-F(x)$ and since $y\in V$ and $x\in U$, we may vary $x$ and $y$ independently, thus $y$ is not always equal to $F(x)$ and hence $rank f(x,y)=n-k$. Furthermore we know that each element of $U\cap \mathcal{M}$ can be written in the form $(x,F(x))$ which tells us that the image of $U\cap \mathcal{M}$ under $f(x,y)$ is
\[
f(x,y)=y-F(x)=F(x)-F(x)=0. \quad \blacksquare
\]  
\end{description}
\end{proof}

\begin{defn}
Let \man{N} be a $C^{\infty}$--manifold with an atlas \man{U}. We say that $\mathcal{M}\subset\mathcal{N}$ is a $k$--dimensional submanifold of \man{N} if
\begin{align*}
&\forall a \in \mathcal{M} \, \exists \, (U,\phi)\in \mathcal{U} \mbox{  such that  } a\in U \\
\intertext{and}
&\phi (U\cap \mathcal{M}) = \phi (U) \cap (\mathbb{R}^{k} \times \{0 \}).
\end{align*}
\end{defn}

\begin{remark}
In particular a subset \man{M} of \real{n} satisfying any of the equivalent conditions in Theorem \ref{thm:subman} is called a $k$--dimensional submanifold of \real{n} ( of class $C^{\infty}$ ). \\
\end{remark}

\subsection{Tangent vectors and Tangent spaces}
In general an elementary picture of a vector as an arrow connecting a point and the origin does not work in a manifold. Where is the origin? What is a straight arrow? How do we define a straight arrow that connects two points on a curved surface?\\
The notion of a vector tangent to a curve or a surface in \real{n} is intuitively clear. But if we try to generalize the notion of tangent vector, we face a difficulty:\\
The elementary definition makes a tangent vector to a surface in \real{n} a tangent vector to \real{n}. But an arbitrary manifold does not have to be contained in any Euclidean space, so we need a definition of a tangent vector that does not depend on any such assumption. We start with a look at the notion of differentiable functions on manifolds.\\   
The definition of differentiability of a real--valued function on a differentiable manifold \man{M} is almost the same as the corresponding definition on \real{n}.
  
\begin{defn}
\label{sec:diff}
Let $ f : \mathcal{W} \subset \mathcal{M} \longrightarrow \mathbb{R}$ be a function on an open subset \man{W} of a differentiable manifold \man{M}. We say that $f$ is differentiable at a point $p\in\mathcal{W}$, provided that for some chart $(U,\varphi)$ such that $p\in U \subset\mathcal{W}$, the composition $f\circ\varphi^{-1}(U):\mathbb{R}^{n} \longrightarrow \mathbb{R}$ is differentiable in the ordinary Euclidean sense at $\varphi (p)$. If $f$ is differentiable at all points of \man{W}, we say that f is differentiable on \man{W}.
\begin{figure}[H]
\centering
\epsfig{file=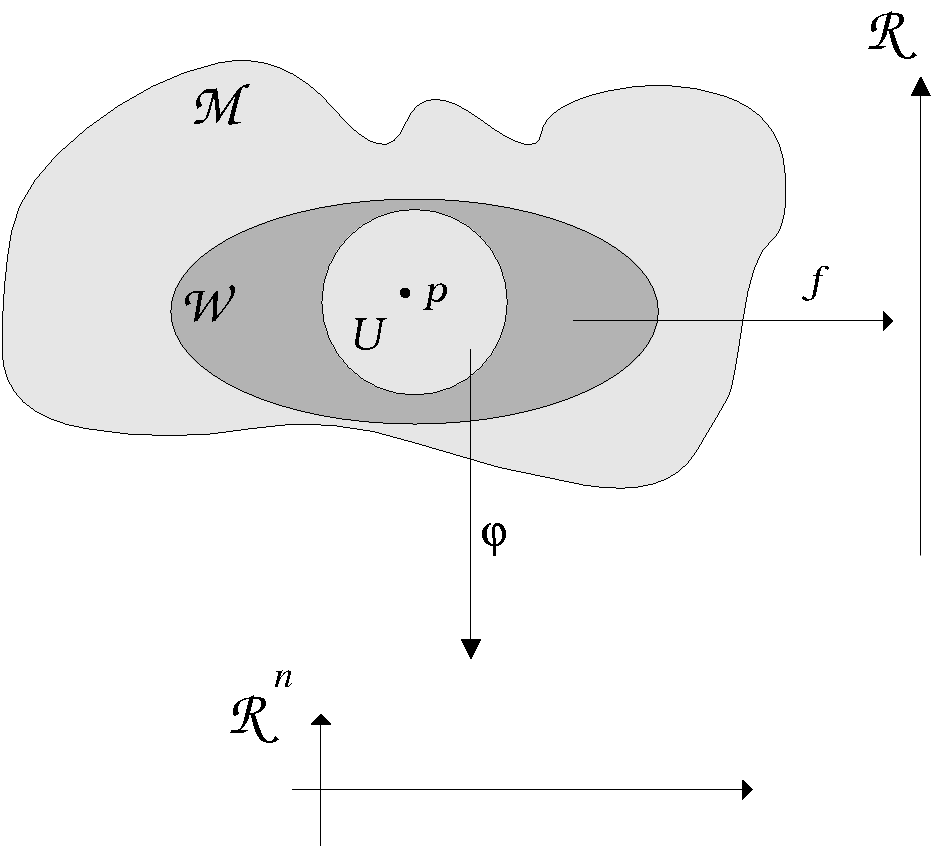, height=6cm, width=8cm}
\caption{A visualization of definition \ref{sec:diff}.}
\end{figure} 
\end{defn}

\begin{lemma}
The definition of differentiability of a real--valued function on a differentiable manifold \man{M} does not depend on the choice of the chart.
\end{lemma}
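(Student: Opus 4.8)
The plan is to compare two charts around the same point and reduce the claim to the smoothness of the coordinate transformations guaranteed by Definition \ref{sec:atlas}. Suppose $f$ is differentiable at $p$ in the sense of Definition \ref{sec:diff} with respect to a chart $(U,\varphi)$, so that $f\circ\varphi^{-1}$ is differentiable at $\varphi(p)$ in the ordinary Euclidean sense. Let $(V,\psi)$ be any other chart with $p\in V\subset\mathcal{W}$. I must show that $f\circ\psi^{-1}$ is differentiable at $\psi(p)$, from which it follows that the notion does not depend on the particular chart chosen.

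First I would restrict attention to the overlap. Since $U$ and $V$ are open and both contain $p$, the intersection $U\cap V$ is an open neighborhood of $p$, and $\psi(U\cap V)$ is an open neighborhood of $\psi(p)$ in $\mathbb{R}^{n}$ on which everything below is defined. On this set I would write the key factorization
\[
f\circ\psi^{-1} \;=\; \left(f\circ\varphi^{-1}\right)\circ\left(\varphi\circ\psi^{-1}\right),
\]
which is valid because $\varphi^{-1}\circ\varphi$ is the identity on the points of $U\cap V$, and $\psi^{-1}$ maps $\psi(U\cap V)$ into $U\cap V$.

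The heart of the argument is then that the right-hand factor $\varphi\circ\psi^{-1}$ is precisely a coordinate transformation in the sense of Definition \ref{sec:atlas}, hence by condition (3) of that definition it is infinitely differentiable, and in particular differentiable at $\psi(p)$, where it takes the value $\varphi(p)$. Since $f\circ\varphi^{-1}$ is differentiable at $\varphi(p)$ by hypothesis, the ordinary Euclidean chain rule shows that the composite $f\circ\psi^{-1}$ is differentiable at $\psi(p)$. As $(V,\psi)$ was arbitrary, differentiability of $f$ at $p$ is independent of the chart; and because the two charts play interchangeable roles, the two resulting notions of differentiability are in fact equivalent.

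I expect no serious obstacle here. The only point requiring care is the bookkeeping of domains, namely ensuring that the composition is formed on the open set $\psi(U\cap V)$ where both $\varphi\circ\psi^{-1}$ and $f\circ\varphi^{-1}$ are simultaneously defined; once this is in place the conclusion is immediate from the chain rule together with the smoothness of the transition maps built into the atlas.
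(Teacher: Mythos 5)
Your proof is correct and follows essentially the same route as the paper: both factor $f\circ\psi^{-1}$ as $(f\circ\varphi^{-1})\circ(\varphi\circ\psi^{-1})$ and invoke the smoothness of the coordinate transformation from Definition \ref{sec:atlas} together with the Euclidean chain rule. Your added care about restricting to the overlap $U\cap V$ and the open set $\psi(U\cap V)$ is a welcome refinement the paper glosses over, but it does not change the argument.
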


\begin{proof}
If $(U_1, \varphi_1)$ and $(U_2, \varphi_2)$ are charts on a differentiable manifold \man{M}, then the change of coordinates $\varphi_1\circ\varphi_2^{-1}$ is differentiable by definition \ref{sec:atlas}. We can write $f\circ\varphi_{2}^{-1}$ as 
\[f\circ\varphi_2^{-1}=(f\circ\varphi_1^{-1})\circ(\varphi_1\circ\varphi_2^{-1}).\]
Since the composition of the Euclidean--differentiable function is differentiable, it follows that the differentiability of $f\circ\varphi_1^{-1}$ implies the differentiability of $f\circ\varphi_2^{-1}$, and vice versa.\quad $\blacksquare$
\end{proof}

\begin{defn}
Let \man{M} be a differentiable manifold. We denote by 
\[
\mathcal{F}(\mathcal{M})=\{ f: \mathcal{M}\longrightarrow\mathbb{R} : f \mbox{ is differentiable }\}
\] 
the algebra of real--valued differentiable functions. Then for $\delta , \rho \in\mathbb{R}$ and $ f,g\in\mathcal{F}(\mathcal{M})$ the following combinations are defined by:
\begin{align*}
(\delta f + \rho g )(p) &=\delta f(p) + \rho g(p)\\
\intertext{and}
(fg)(p) &=f(p)g(p)
\end{align*}
for any point $p\in\mathcal{M}$. Also we identify any $\delta\in\mathbb{R}$ with the constant function $\delta$ given by $\delta (p)=\delta $ for $p\in\mathcal{M}$.
\end{defn}
  
\begin{defn}
\label{sec:tanvec}
Let $p$ be any point on a differentiable manifold \man{M}. A tangent vector, denoted by $v_p$ to \man{M} at $p$ is a real--valued function $v_p:\mathcal{F}(\mathcal{M})\longrightarrow \mathbb{R}$, such that
\begin{align}
\mbox{\underline{Linear property:}}&\mbox{ } &v_p[\delta f +\rho g]=\delta v_p[f]+\rho v_p[g]\\
\mbox{\underline{Leibnitz property:}}&\mbox{ } &v_p[fg]=f(p)v_p[g]+g(p)v_p[f]
\end{align}
is satisfied for all $\delta, \rho\in\mathbb{R}$ and $f,g\in\mathcal{F}(\mathcal{M})$.
\end{defn}

\begin{defn}
Let $(U,\varphi)$ be a chart on a differentiable manifold \man{M} with the local coordinate system $\varphi = (x^{1}, \ldots ,x^{n})$. The natural coordinate functions of \real{n} are denoted by $u_i$. For $f\in\mathcal{F}(\mathcal{M})$ and $p\in(U, \varphi)$, we write $q=\varphi(p)$ and define
\begin{equation}
\left.\frac{\partial f}{\partial {x^i}}\right|_p = \left.\frac{\partial f}{\partial {x^i}}\right|_{\varphi^{-1}(q)} =\left. \frac{\partial (f\circ\varphi^{-1})}{\partial u_i}\right|_q \mbox{      for $i=1,\ldots ,n$.}
\end{equation}
\end{defn}

\begin{remark}
The derivative that appears in the right hand side of the equation is the ordinary Euclidean partial derivative.
\end{remark}

\begin{lemma}
Let $(U,\varphi)$ be a chart on a differentiable manifold \man{M} and let $\varphi = (x^{1}, \ldots ,x^{n})$. If $p\in (U)$, then
\[
\left. \frac{\partial}{\partial {x^i}}\right|_p : \mathcal{F}(\mathcal{M}) \longrightarrow \mathbb{R}
\]
defined by 
\[\left. \frac{\partial}{\partial {x^i}}\right|_p (f)=\left. \frac{\partial f}{\partial {x^i}}\right|_p \]
is a tangent vector to \man{M} at $p$ for $i=1,\ldots ,n$.
\end{lemma}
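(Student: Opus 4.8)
The plan is to verify directly that the operator $\left.\frac{\partial}{\partial x^i}\right|_p$ satisfies the two defining axioms of a tangent vector from Definition \ref{sec:tanvec}, namely the linear property and the Leibnitz property. The whole argument rests on one simple observation: composing a function on \man{M} with the chart inverse $\varphi^{-1}$ turns the algebraic operations on $\mathcal{F}(\mathcal{M})$ into the corresponding pointwise operations on ordinary functions defined on the open set $\varphi(U)\subset\mathbb{R}^n$. Once this is in place, both axioms follow from the familiar linearity and product rule for Euclidean partial derivatives, since by definition the operator is nothing but such a Euclidean derivative of $f\circ\varphi^{-1}$ evaluated at $q=\varphi(p)$.

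First I would record the key compositional identities. For $f,g\in\mathcal{F}(\mathcal{M})$ and scalars $\delta,\rho\in\mathbb{R}$, writing $q=\varphi(p)$, one has
\[
(\delta f + \rho g)\circ\varphi^{-1} = \delta(f\circ\varphi^{-1}) + \rho(g\circ\varphi^{-1})
\]
and
\[
(fg)\circ\varphi^{-1} = (f\circ\varphi^{-1})(g\circ\varphi^{-1}),
\]
both understood as equalities of real--valued functions on $\varphi(U)$. These hold simply because composition on the right distributes over pointwise sums, scalar multiples and products. Note also that by Definition \ref{sec:diff}, together with the preceding chart--independence lemma, each $f\circ\varphi^{-1}$ is a genuinely differentiable function near $q$, so the ordinary partial derivatives appearing below are all well defined.

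Next I would establish linearity. By the definition of $\left.\frac{\partial}{\partial x^i}\right|_p$ and the first identity,
\[
\left.\frac{\partial}{\partial x^i}\right|_p[\delta f + \rho g] = \left.\frac{\partial\bigl(\delta(f\circ\varphi^{-1}) + \rho(g\circ\varphi^{-1})\bigr)}{\partial u_i}\right|_q,
\]
and since the ordinary partial derivative $\partial/\partial u_i$ is a linear operator, the right--hand side equals $\delta\left.\frac{\partial}{\partial x^i}\right|_p[f] + \rho\left.\frac{\partial}{\partial x^i}\right|_p[g]$, which is exactly the linear property.

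Finally, for the Leibnitz property I would apply the second identity together with the Euclidean product rule at $q$:
\[
\left.\frac{\partial}{\partial x^i}\right|_p[fg] = (f\circ\varphi^{-1})(q)\left.\frac{\partial(g\circ\varphi^{-1})}{\partial u_i}\right|_q + (g\circ\varphi^{-1})(q)\left.\frac{\partial(f\circ\varphi^{-1})}{\partial u_i}\right|_q.
\]
The only remaining point is to identify the coefficients: since $(f\circ\varphi^{-1})(q)=f(\varphi^{-1}(q))=f(p)$ and likewise $(g\circ\varphi^{-1})(q)=g(p)$, this reads $f(p)\left.\frac{\partial}{\partial x^i}\right|_p[g] + g(p)\left.\frac{\partial}{\partial x^i}\right|_p[f]$, which is the Leibnitz property, completing the verification. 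I do not expect any genuine obstacle here; the only thing one must be careful about is keeping track of where each object is evaluated, namely at $q$ in the Euclidean picture versus $p$ on the manifold, so that the chart inverse is applied consistently and the coefficient identification $f(\varphi^{-1}(q))=f(p)$ is legitimate.
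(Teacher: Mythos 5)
Your proposal is correct and follows essentially the same route as the paper: set $q=\varphi(p)$, transfer sums, scalar multiples and products through $\varphi^{-1}$ to ordinary functions on $\varphi(U)$, and then invoke the linearity and product rule of the Euclidean partial derivative $\partial/\partial u_i$, identifying $(f\circ\varphi^{-1})(q)=f(p)$ for the Leibnitz coefficients. No gaps; your explicit remark on keeping track of evaluation at $q$ versus $p$ is exactly the one point of care the paper's computation also relies on.
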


\begin{proof}
Put $q=\varphi (p)$ and let $\delta,\rho\in\mathbb{R}$ and $f,g\in\mathcal{F}(\mathcal{M})$. Then checking the properties of tangent vector, we get:

\begin{align*}
\left.\frac{\partial {(\delta f+\rho g)}}{\partial {x^i}}\right|_p &=\left.\frac{\partial }{\partial u_i}((\delta f+\rho g)\circ \varphi^{-1})\right|_q =\left.\frac{\partial }{\partial u_i}(\delta (f\circ \varphi^{-1})+\rho (g\circ \varphi^{-1}))\right|_q \\
 \\
&=\left. \delta \frac{\partial {(f\circ \varphi^{-1})}}{\partial u_i}\right|_q+\left. \rho \frac{\partial {(g\circ \varphi^{-1})}}{\partial u_i}\right|_q =\delta \left.\frac{\partial f}{\partial {x^i}}\right|_p + \rho \left.\frac{\partial g}{\partial {x^i}}\right|_p . \\
\intertext{Hence the Linear property is satisfied.}
\left.\frac{\partial {(f g)}}{\partial {x^i}}\right|_p &=\left.\frac{\partial }{\partial u_i}((f g)\circ \varphi^{-1})\right|_q=\left.\frac{\partial }{\partial u_i}((f\circ \varphi^{-1})(g\circ \varphi^{-1}))\right|_q \\
 \\
&=(g\circ\varphi^{-1})(q)\left. \frac{\partial{(f\circ\varphi^{-1})}}{\partial {u_i}}\right|_q +(f\circ\varphi^{-1})(q)\left. \frac{\partial {(g\circ\varphi^{-1})}}{\partial {u_i}}\right|_q \\
 \\
&=g(p)\left. \frac{\partial f}{\partial {x^i}}\right|_p + f(p)\left. \frac{\partial g}{\partial {x^i}}\right|_p . \\
\end{align*}
The Leibnitz property is satisfied as well, and our proof is finished.   $\blacksquare$
\end{proof}

\begin{theorem}
\label{sec:chain}
\textbf{ (The Chain Rule) } Let $(U_1 ,\varphi_1)$ and $(U_2 ,\varphi_2)$ be two charts on a differentiable manifold \man{M}. Let \coord be a local coordinate system on $(U_1 ,\varphi_1)$ and $(y^1,\ldots ,y^n)$ be local coordinate system on $(U_2 ,\varphi_2)$, then for $f\in\mathcal{F}(\mathcal{M})$ on $U_1\cap U_2$ we have:
\[ \frac{\partial f}{\partial {y^i}} =\sum_{i=1}^{n} {\frac{\partial f}{\partial {x^j}}\frac{\partial {x^j}}{\partial {y^i}}}.\]
\end{theorem}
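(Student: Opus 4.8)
The plan is to reduce the manifold chain rule to the ordinary chain rule in Euclidean space, which is precisely how the partials $\partial/\partial x^i$ and $\partial/\partial y^i$ were defined in terms of the natural coordinate functions $u_i$ of $\mathbb{R}^n$. First I would fix a point $p \in U_1 \cap U_2$ and set $q_1 = \varphi_1(p)$ and $q_2 = \varphi_2(p)$. Unwinding the definition of the partial derivative with respect to $y^i$ at $p$, it equals $\partial(f \circ \varphi_2^{-1})/\partial u_i$ evaluated at $q_2$, so everything is reduced to a statement about a single ordinary function $f \circ \varphi_2^{-1}$ defined on an open subset of $\mathbb{R}^n$.

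The key step is to factor this composition through the other chart. Since $\varphi_1^{-1} \circ \varphi_1$ is the identity on $U_1 \cap U_2$, I can write
\[
f \circ \varphi_2^{-1} = (f \circ \varphi_1^{-1}) \circ (\varphi_1 \circ \varphi_2^{-1}),
\]
exactly as in the earlier lemma showing that differentiability is chart--independent. The inner map $\varphi_1 \circ \varphi_2^{-1}$ is the coordinate transformation $\Psi_{12}$, which is $C^{\infty}$ by the atlas axiom in Definition~\ref{sec:atlas}; moreover its $j$-th component is $x^j \circ \varphi_2^{-1}$, that is, the coordinate function $x^j$ read through $\varphi_2$. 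This identification is what ultimately licenses the notation $\partial x^j/\partial y^i$.

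Now I would apply the ordinary Euclidean chain rule to the composition of the real-valued function $F = f \circ \varphi_1^{-1}$ with the vector-valued map $G = \varphi_1 \circ \varphi_2^{-1}$, observing that $G(q_2) = \varphi_1(p) = q_1$. This yields
\[
\left.\frac{\partial (f \circ \varphi_2^{-1})}{\partial u_i}\right|_{q_2} = \sum_{j=1}^{n} \left.\frac{\partial (f \circ \varphi_1^{-1})}{\partial u_j}\right|_{q_1} \left.\frac{\partial (x^j \circ \varphi_2^{-1})}{\partial u_i}\right|_{q_2}.
\]
Finally I would translate each factor back into manifold notation: by definition the first factor is $\partial f/\partial x^j$ at $p$ and the second is $\partial x^j/\partial y^i$ at $p$, so summing over $j$ gives the asserted formula (with the summation index correctly read as $j$).

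The work here is bookkeeping rather than a genuine obstacle. The two points requiring care are keeping the evaluation points $q_1$ and $q_2$ straight through the composition, and verifying that the components of $\Psi_{12}$ really are the coordinate functions $x^j$ expressed in the $y$-coordinates. The differentiability needed to invoke the Euclidean chain rule is supplied by the $C^{\infty}$-compatibility of the charts from Definition~\ref{sec:atlas}, applied both to $\Psi_{12}$ and, via the chart-independence lemma, to $f$ itself.
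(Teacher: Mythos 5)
Your proof is correct and complete: reducing to the Euclidean chain rule by factoring $f\circ\varphi_2^{-1}=(f\circ\varphi_1^{-1})\circ(\varphi_1\circ\varphi_2^{-1})$, identifying the components of the coordinate transformation with $x^j\circ\varphi_2^{-1}$, and translating back through the definitions of $\partial/\partial x^j$ and $\partial/\partial y^i$ is exactly the right argument, and you correctly note that the summation index in the stated formula should be $j$. The paper itself states this theorem without any proof, so there is nothing to compare against; your write-up supplies the missing argument in the standard way, consistent with the factoring trick the paper does use in its chart-independence lemma.
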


\begin{defn}
Let \man{M} be a differentiable manifold. The tangent space to \man{M} at a point $p\in\mathcal{M}$ is the set of all tangent vectors to \man{M} at $p$. We denote it by \tpm. Formally we can define \tpm as follows:

\begin{align*}
\mathcal{T}_p\mathcal{M} & = \{ v_p : \mathcal{F}(\mathcal{M})  \longrightarrow  \mathbb{R} : v_p [\delta f+\rho g] = \delta v_p [f] +\rho v_p [g] \mbox{ and } \\
 \\
& v_p [fg] = f(p) v_p [g] + g(p) v_p [f] \mbox{ for } \delta ,\rho\in\mathbb{R}\mbox{ and } f,g \in \mathcal{F}(\mathcal{M})\}.
\end{align*}
\end{defn}

\begin{theorem}
\label{sec:tpmisofdimn}
If \man{M} is an $n$--dimensional differentiable manifold, then the tangent space \tpm  is also of dimension $n$.
\end{theorem}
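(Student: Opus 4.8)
The plan is to exhibit an explicit basis for the tangent space and read off its cardinality. We have already shown above that each coordinate derivation $\partial/\partial x^i|_p$, for a chart $(U,\varphi)$ with $\varphi=(x^1,\ldots,x^n)$ containing $p$, is a tangent vector at $p$. So I would aim to prove that the $n$ operators $\partial/\partial x^1|_p,\ldots,\partial/\partial x^n|_p$ are linearly independent and span \tpm; since a vector space admitting a basis of $n$ elements has dimension $n$, this settles the theorem.

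Linear independence is immediate: if $\sum_{i=1}^n a_i\,\partial/\partial x^i|_p=0$, I would apply this operator to the $j$-th coordinate function $x^j\in\mathcal{F}(\mathcal{M})$. Since $\partial x^j/\partial x^i|_p=\delta_{ij}$, this forces $a_j=0$ for every $j$.

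The substantive step is spanning. First I would record that every tangent vector kills constants: applying the Leibnitz property to $1=1\cdot 1$ gives $v_p[1]=2v_p[1]$, hence $v_p[1]=0$, and linearity yields $v_p[c]=0$ for every constant $c$. The key analytic tool is Hadamard's lemma: writing $q=\varphi(p)$, for $f\in\mathcal{F}(\mathcal{M})$ one has, near $p$,
\[
f = f(p) + \sum_{i=1}^n (x^i-q^i)\, g_i, \qquad g_i(x)=\int_0^1 \partial_i(f\circ\varphi^{-1})\big(q+t(\varphi(x)-q)\big)\,dt,
\]
where each $g_i$ is differentiable with $g_i(p)=\partial f/\partial x^i|_p$. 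Applying an arbitrary $v_p$ to this expansion, the constant $f(p)$ would contribute nothing; and in each term $(x^i-q^i)g_i$ the factor $x^i-q^i$ vanishes at $p$, so the Leibnitz property leaves only $g_i(p)\,v_p[x^i]=(\partial f/\partial x^i|_p)\,v_p[x^i]$. Setting $a^i=v_p[x^i]$, I would obtain $v_p[f]=\sum_i a^i\,\partial f/\partial x^i|_p$ for all $f$, that is $v_p=\sum_i a^i\,\partial/\partial x^i|_p$. Thus the coordinate derivations span \tpm.

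The main obstacle is a localization issue, since $\mathcal{F}(\mathcal{M})$ consists of globally defined functions whereas the Hadamard functions $g_i$ live only on the chart domain. I would first establish a locality property --- that $v_p[f]$ depends only on the behaviour of $f$ near $p$, which follows from the annihilation of constants together with a bump-function argument showing that $v_p$ vanishes on any $f$ that is identically zero on a neighborhood of $p$. Then I would multiply the Hadamard identity by a bump function supported in $U$ and equal to $1$ near $p$, turning it into an equality of genuine elements of $\mathcal{F}(\mathcal{M})$ valid near $p$, to which $v_p$ may legitimately be applied. This promotion of local smooth data to the global algebra is the only genuinely delicate point; the remaining manipulations are purely algebraic.
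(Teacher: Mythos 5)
Your proposal is correct and follows essentially the same route as the paper: exhibit the coordinate derivations as a basis, prove spanning via the Hadamard-type decomposition $f=f(p)+\sum_i(x^i-x^i(p))f_i$ obtained from the fundamental theorem of calculus, and get linear independence by applying the operators to the coordinate functions $x^j$. The only difference is that you treat the local-versus-global issue (and the annihilation of constants) more carefully with bump functions, where the paper instead invokes a ``globalization'' of $f\circ\varphi^{-1}$ and shrinks $U$; this is a refinement of the same argument, not a different one.
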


\begin{proof}
What we need to show is that we can write a tangent vector as the linear combination of $n$ tangent vectors, and that the $n$ vectors are linearly independent. We can express the linear combination as follows:

\begin{equation}
\label{eq:lincomb}
v_p =\sum_{i=1}^{n} {v_p[x^i]\left.\frac{\partial}{\partial {x^i}}\right|_p }
\end{equation}
if $(U,\varphi)$ is a chart with $p\in U$ and $\varphi = (x^{1},\ldots ,x^{n})$. \\
First we prove the existence of a tool that we need.
We let $g:\mathbb{R}^n \longrightarrow \mathbb{R}$ be a differentiable function, and let $a=(a_1,\ldots ,a_n)$ be in \real{n}. Now we will show that there exist differentiable functions $g_i :\mathbb{R}^n \longrightarrow \mathbb{R}$ (for $i=1,\ldots ,n$.) such that

\begin{equation}
\label{eq:g}
g=\sum_{i=1}^{n}{(u_i - a_i)g_i} + g(a)
\end{equation}
Where the set $(u_1,\ldots ,u_n)$ is the natural coordinate functions of \real{n}.\\
Now we fix $t=(t_1,\ldots ,t_n)\in \mathbb{R}^n$ and consider $f:\mathbb{R}\longrightarrow\mathbb{R}$ defined by

\begin{equation*}
f(s)=g(st_1 + (1-s) a_1,\ldots ,st_n+(1-s)a_n)=g(st+(1-s)a).
\end{equation*}
Then $f$ is differentiable, since $g$ is. Moreover we have $u_i(s)=st_i+(1-s)a_i$, and by the chain rule we get:

\begin{equation}
\label{eq:fprim}
f'(s)=\sum_{i=1}^{n}{\frac{\partial g}{\partial {u_i}}\frac{\partial {u_i}}{\partial {s_i}}}=\sum_{i=1}^{n}{(t_i-a_i)\frac{\partial g}{\partial {u_i}}(st_1 + (1-s) a_1,\ldots ,st_n+(1-s)a_n)}.
\end{equation}
On the other hand by the fundamental theorem of analysis we have:

\begin{equation}
\label{eq:integral}
\int_{0}^{1} {f'(s)ds} =f(1)-f(0)=g(t)-g(a).
\end{equation}
So from the equations (\ref{eq:fprim}) and (\ref{eq:integral}) we get:

\begin{equation*}
\begin{split}
g(t)-g(a) &=\int_{0}^{1}{f'(s)\,ds} \\
 \\
& =\int_{0}^{1}{\sum_{i=1}^{n}{(t_i-a_i)\frac{\partial g}{\partial {u_i}}(st_1 + (1-s) a_1,\ldots ,st_n+(1-s)a_n)}\,ds}\\
 \\
& = \sum_{i=1}^{n}{(t_i-a_i)\int_{0}^{1}{\frac{\partial g}{\partial {u_i}}(st_1 + (1-s) a_1,\ldots ,st_n+(1-s)a_n)\,ds}}.
\end{split}
\end{equation*}
If we denote $\int_{0}^{1}{\frac{\partial g}{\partial {u_i}}(st_1 + (1-s) a_1,\ldots ,st_n+(1-s)a_n)\,ds}$  by  $g_i(t)$, we obtain

\begin{align*}
g(t)-g(a) &=\sum_{i=1}^{n}{(u_i(t)-a_i)g_i(t)} \quad \Longrightarrow\\ 
g &=\sum_{i=1}^{n}{(u_i - a_i)g_i} + g(a)
\end{align*}
And so we have obtained (\ref{eq:g}). Now we are ready to show that 
\[
v_p =\sum_{i=1}^{n} {v_p[x^i]\left.\frac{\partial}{\partial {x^i}}\right|_p .}
\]
for all $f\in \mathcal{F}(\mathcal{M})$ and $p\in (U)$, where $(U,\varphi)$ is a chart  on a differentiable manifold \man{M} and $\varphi = (x^{1},\ldots ,x^{n})$.\\
We let $g:\mathbb{R}^n \longrightarrow \mathbb{R}$ be a globalization of $f\circ\varphi^{-1} : \varphi (U) \longrightarrow \mathbb{R}$, equivalently $g|_{\varphi (U)} = f\circ\varphi^{-1}$. Since we are only concerned with what is happening in an arbitrarily small neighborhood of $p$, we may suppose --- by taking a smaller $U$ --- that such a globalization exists. Then there exist  functions $g_i :\mathbb{R}^n \longrightarrow \mathbb{R}$ for $i=1,\ldots ,n.$ such that:

\begin{equation*}
g=\sum_{i=1}^{n} {(u_i - u_i(\varphi (p))) g_i} + g(\varphi (p)).
\end{equation*}
Combining this with the fact that $g=f\circ\varphi^{-1}$ on $U$, we see that 
\[
g(\varphi (p))= f\circ\varphi^{-1} (\varphi (p))=f(p).
\]
We see also that $ u_i(\varphi (p))= x^{i}(p)$, and if we let $g_i \circ\varphi$ be denoted by $f_i$ we get

\begin{equation}
\label{eq:f}
f=\sum_{i=1}^{n}{(x^i -x^i (p))f_i}+f(p).
\end{equation}
Now we are able to compute $v_p[f]$ from (\ref{eq:f}):

\begin{equation*}
\begin{split}
v_p[f] &=v_p\left[ \sum_{i=1}^{n}{(x^i -x^i(p))f_i} +f(p)\right]=\sum_{i=1}^{n}{v_p[f]\left[ (x^i -x^i(p))f_i \right]+v_p[f(p)]}\\
&=\sum_{i=1}^{n}{\left( f_i(p) v_p \left[ (x^i -x^i(p))\right]+(x^i(p)-x^i(p))v_p[f_i]\right) }+v_p[f(p)].
\end{split}
\end{equation*}
Since $x^i(p)-x^i(p)=0$ and $f(p)$ and $x^i(p)$ are constants we get:
\[ v_p[f]=\sum_{i=1}^{n} {f_i (p) v_p [x^i]} \]
which can be written as follows:

\begin{equation}
\label{eq:fpartial}
\left. \frac{\partial f}{\partial {x^j}}\right|_p =\sum_{i=1}^{n} {f_i(p) \left. \frac{\partial {x^i}}{\partial {x^j}}\right|_p}.
\end{equation}
And this is equal to $f_j(p)$ since $\left. \frac{\partial {x^i}}{\partial {x^j}}\right|_p$ is equal to the \emph{Kronecker delta function} defined by

\begin{equation*} 
\delta_{ij}=\left\{ \begin{array}{ll}
 1\quad \mbox{ for } &i=j\\
 0\quad \mbox{ for } &i\neq j
\end{array}\right.
\end{equation*}
and since
\begin{equation*}
\left.\frac{\partial {x^i}}{\partial {x^j}}\right|_p =\left.\frac{\partial {x^i \circ\varphi^{-1}}}{\partial {u_j}}\right|_{\varphi(p)} =\left.\frac{\partial {u_i}}{\partial {u_j}}\right|_{\varphi(p)}=\delta_{ij},
\end{equation*}
the equation (\ref{eq:fpartial}) is equal to $f_j(p)$, and hence

\begin{equation*}
\begin{split}
\left. \left( \sum_{j=1}^{n}{v_p[x^j] \frac{\partial}{\partial {x^j}}}\right)\right|_p &=\sum_{j=1}^{n} {v_p[x^j]\left. \frac{\partial f}{\partial {x^j}}\right|_p}\\
&= \sum_{j=1}^{n} {v_p[x^j] f_j(p)} = v(p)[f].
\end{split}
\end{equation*}
Now we have shown that the tangent vector $v_p[f]$ can be written as a linear combination of the vectors $\left. \frac{\partial}{\partial {x^1}} \right|_p ,\ldots ,\left. \frac{\partial}{\partial {x^n}} \right|_p$. The remaining part of the proof is now to show that the vectors $\left. \frac{\partial}{\partial {x^1}} \right|_p ,\ldots ,\left. \frac{\partial}{\partial {x^n}} \right|_p$ are linearly independent, i.e. they form a basis for the tangent space \tpm.\\
Suppose that $a_1,\ldots ,a_n\in\mathbb{R}^n$ are such that 
\[ \sum_{i=1}^{n} {a_i \left. \frac{\partial}{\partial {x^i}}\right|_p} =0.\]
Then
\[ 0= \sum_{i=1}^{n} {a_i \left. \frac{\partial {x^j}}{\partial {x^i}}\right|_p} =\sum_{i=1}^{n} {a_i \delta_{ij}} = a_j \]
for $j=1\ldots ,n$. Thus all integers $a_j$ for $j=1\ldots ,n$ must be zero, i.e. the vectors are linearly independent.\quad $\blacksquare$
\end{proof}

The following picture probably gives a better understanding of a globalization of a function on a differentiable manifold.

\begin{figure}[H]
\centering
\epsfig{file=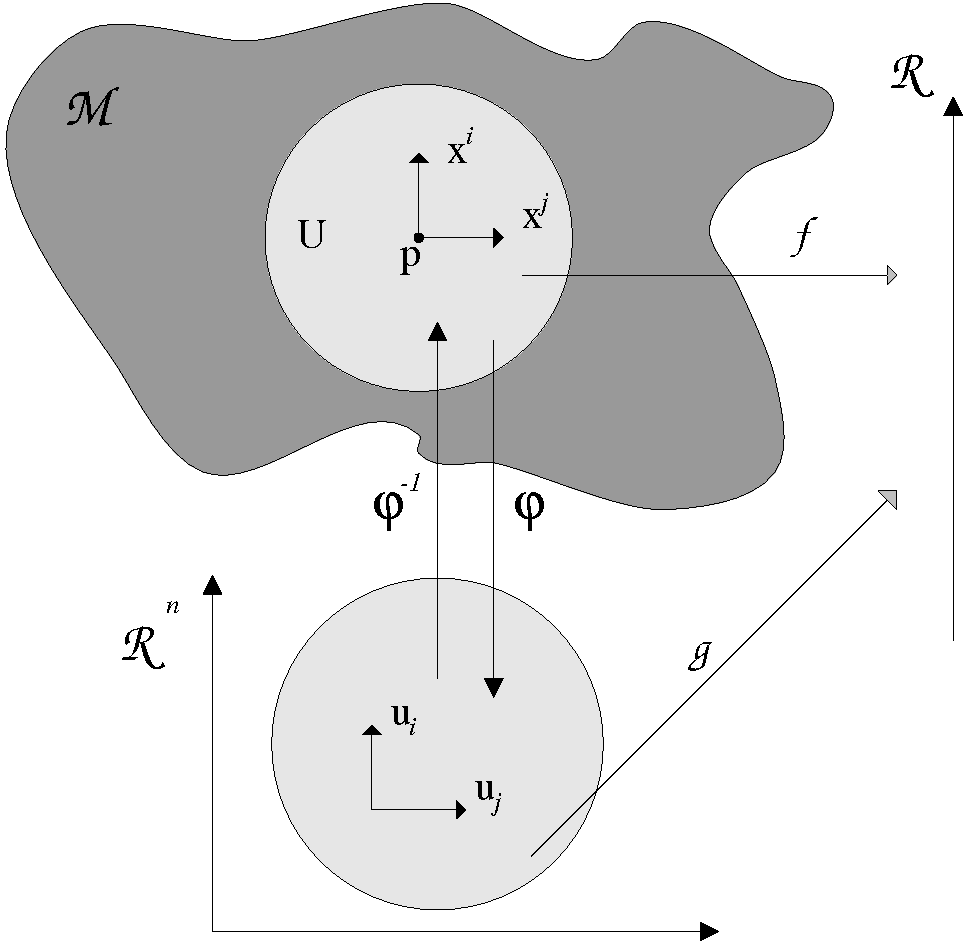, height=7cm, width=8cm}
\caption{The globalization $g$ of $f$.}
\end{figure}

\subsection{The Tangent Map}
In this section we show how a differentiable map $\Psi :\mathcal{M}\longrightarrow\mathcal{N}$ between differentiable manifolds \man{M} and \man{N} gives rise to a linear map between their tangent spaces which we will call the tangent map. The tangent map is the best linear approximation to a differentiable map between manifolds, which we start presenting a definition for. \\

\begin{defn}
Let $\Psi : \mathcal{M} \longrightarrow \mathcal{N}$ be a map between differentiable manifolds \man{M} and \man{N}. We say that $\Psi$ is differentiable if for any charts $(U,\varphi)$, $(V,\psi)$ on \man{M} and \man{N} respectively the mapping
\[ 
\psi \circ \Psi \circ \varphi^{-1} : \varphi (U) \longrightarrow \psi (V)
\]   
is $C^{\infty}$.
\end{defn}

\begin{defn}
Let $\Psi :\mathcal{M}\longrightarrow\mathcal{N}$ be a differentiable map between two differentiable manifolds \man{M} and \man{N}, and let $p\in \mathcal{M}$ and $\Psi (p) \in \mathcal{N}$. Then the tangent map $\Psi$ at $p$ denoted by $\Psi_{*p}$ is the map $\Psi_{*p} :\mathcal{T}_p \mathcal{M}\longrightarrow\mathcal{T}_{\Psi (p)} \mathcal{N}$ given by:

\begin{equation}
\Psi_{*p}(v_p)[f]=v_p [f\circ\Psi]
\end{equation}
for each $f\in\mathcal{F}(\mathcal{M})$ and $v_p \in \mathcal{T}_{p} \mathcal{M}$.
\end{defn}

\begin{theorem}
Let $\Psi :\mathcal{M}\longrightarrow\mathcal{N}$ be a differentiable map, and let $p\in \mathcal{M}$,\ $v_p \in \mathcal{T}_{p} \mathcal{M}$. Define $\Psi_{*p}(v_p) :\mathcal{F}(\mathcal{N}) \longrightarrow \mathbb{R}$ by $\Psi_{*p}(v_p)[f]=v_p [f\circ\Psi]$. Then
\[ \Psi_{*p}(v_p) \in \mathcal{T}_{\Psi (p)} \mathcal{N}. \]
\end{theorem}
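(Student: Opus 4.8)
The plan is to verify directly that the map $\Psi_{*p}(v_p)$ satisfies the two defining properties of a tangent vector at $\Psi(p)$, namely the linear property and the Leibnitz property from Definition \ref{sec:tanvec}. The key observation that makes everything work is that precomposition with $\Psi$, i.e. the assignment $f \mapsto f \circ \Psi$, sends differentiable functions on $\mathcal{N}$ to differentiable functions on $\mathcal{M}$, so that $v_p[f \circ \Psi]$ actually makes sense whenever $f \in \mathcal{F}(\mathcal{N})$. First I would record this well-definedness: since $\Psi$ is differentiable and $f \in \mathcal{F}(\mathcal{N})$, the composite $f \circ \Psi$ lies in $\mathcal{F}(\mathcal{M})$, so $v_p$ may legitimately be applied to it.

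For the linear property, I would take $\delta, \rho \in \mathbb{R}$ and $f, g \in \mathcal{F}(\mathcal{N})$ and compute $\Psi_{*p}(v_p)[\delta f + \rho g]$ by unwinding the definition to $v_p[(\delta f + \rho g)\circ\Psi]$. The point is that $(\delta f + \rho g)\circ\Psi = \delta(f\circ\Psi) + \rho(g\circ\Psi)$ as functions on $\mathcal{M}$, which is a purely pointwise identity. Applying the linearity of $v_p$ (already known, since $v_p \in \mathcal{T}_p\mathcal{M}$) then yields $\delta\, v_p[f\circ\Psi] + \rho\, v_p[g\circ\Psi]$, and folding the definition back up gives $\delta\,\Psi_{*p}(v_p)[f] + \rho\,\Psi_{*p}(v_p)[g]$, as required.

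For the Leibnitz property, the argument is parallel: I would expand $\Psi_{*p}(v_p)[fg] = v_p[(fg)\circ\Psi]$ and use the pointwise identity $(fg)\circ\Psi = (f\circ\Psi)(g\circ\Psi)$. Invoking the Leibnitz rule for $v_p$ produces
\[
(f\circ\Psi)(p)\, v_p[g\circ\Psi] + (g\circ\Psi)(p)\, v_p[f\circ\Psi].
\]
Here the only subtlety is to rewrite the scalar coefficients correctly: $(f\circ\Psi)(p) = f(\Psi(p))$ and likewise for $g$, so that the expression becomes $f(\Psi(p))\,\Psi_{*p}(v_p)[g] + g(\Psi(p))\,\Psi_{*p}(v_p)[f]$, which is exactly the Leibnitz property evaluated at the base point $\Psi(p)$.

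I do not expect any genuine obstacle here; the proof is essentially bookkeeping, with the substance resting entirely on two facts established earlier — that $\Psi$ differentiable guarantees $f\circ\Psi \in \mathcal{F}(\mathcal{M})$, and that $v_p$ itself is already a tangent vector. The one place to be careful is the evaluation of the Leibnitz coefficients at $p$ versus $\Psi(p)$: one must remember that $v_p$ weights its two terms by the values of its arguments at $p$, and that these arguments are $f\circ\Psi$ and $g\circ\Psi$, whose values at $p$ are the values of $f$ and $g$ at $\Psi(p)$. Once that identification is made, both properties fall out and we conclude that $\Psi_{*p}(v_p) \in \mathcal{T}_{\Psi(p)}\mathcal{N}$.
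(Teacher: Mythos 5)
Your proposal is correct and follows essentially the same route as the paper's own proof: unwind the definition, use the pointwise identities $(\delta f+\rho g)\circ\Psi=\delta(f\circ\Psi)+\rho(g\circ\Psi)$ and $(fg)\circ\Psi=(f\circ\Psi)(g\circ\Psi)$, and invoke the linearity and Leibnitz properties of $v_p$, identifying $(f\circ\Psi)(p)$ with $f(\Psi(p))$. Your explicit remark that $f\circ\Psi\in\mathcal{F}(\mathcal{M})$ (so that $v_p$ may be applied to it) is a small point the paper leaves implicit, but otherwise the two arguments coincide.
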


\begin{proof}
Let $f,g\in\mathcal{F} (\mathcal{N}) \mbox{ and } \delta ,\rho \in \mathbb{R}$. Then

\begin{align*}
\Psi_{*p}(v_p)[fg] &=v_p [(fg)\circ \Psi] = v_p [(f\circ\Psi)(f\circ\Psi)]\\
 \\
&= (f\circ\Psi)(p)v_p [g\circ\Psi]+(g\circ\Psi)(p)v_p [f\circ\Psi]\\
 \\
&= f(\Psi (p)) \Psi_{*p}(v_p)[g]+g(\Psi (p)) \Psi_{*p}(v_p)[f].\\
\intertext{Thus $\Psi_{*p}(v_p)$ satisfies the Leibnitz property.}
\Psi_{*p}(v_p)[\delta f+\rho g] &=v_p [(\delta f+\rho g)\circ \Psi] = v_p[(\delta f\circ \Psi)+(\rho g\circ \Psi)]\\
 \\
&=\delta v_p[f\circ\Psi]+\rho v_p[g\circ\Psi]= \delta \Psi_{*p}(v_p)[f]+\rho \Psi_{*p}(v_p)[g].
\end{align*}
The Linear property is satisfied as well, and the proof is completed.    $\blacksquare$
\end{proof}

\begin{remark}
From the second part of the proof one can conclude that $\Psi_{*p} :\mathcal{T}_p \mathcal{M}\longrightarrow\mathcal{T}_{\Psi (p)} \mathcal{N}$ is a linear map between the vector spaces.
\end{remark}

\begin{theorem}
Let $(U_1,\varphi_1)$ be a chart on an $m$--dimensional manifold \man{M}, let $p$ denote a point in the chart domain $U_1 \subset \mathcal{M}$, and let $\varphi_{1} = (x^1,\ldots ,x^m)$ be the local coordinate system on $U_1$. Let $(V_1,\psi_1)$ be a chart on an $n$--dimensional manifold \man{N}, let $\Psi (p)$ denote a point in the chart domain $V_1 \subset \mathcal{N}$, and let $\psi_{1} = (y^1,\ldots ,y^n)$ be the local coordinate system on $V_{1}$. Assume that $\Psi : \mathcal{M} \longrightarrow \mathcal{N}$ is a differentiable map. Then for $i=1,\ldots ,m.$ we have
\[ \Psi_{*p}\left( \left. \frac{\partial}{\partial {x^i}}\right|_p \right) = \sum_{j=1}^{n} {\left. \frac{\partial {(y^j \circ\Psi)}}{\partial {x^i}}(p)\frac{\partial}{\partial {y^j}} \right|_{\Psi (p)}}.\]
\end{theorem}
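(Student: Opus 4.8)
The plan is to exploit the fact, established in the theorem immediately preceding, that $\Psi_{*p}(v_p)$ is genuinely a tangent vector at $\Psi(p)$, and then to read off its coordinate components using the basis expansion proved in Theorem \ref{sec:tpmisofdimn}. Concretely, since $\Psi_{*p}\!\left(\partial/\partial x^i|_p\right)$ lies in $\mathcal{T}_{\Psi(p)}\mathcal{N}$, and since the vectors $\partial/\partial y^1|_{\Psi(p)},\ldots,\partial/\partial y^n|_{\Psi(p)}$ form a basis of that space, the expansion formula (\ref{eq:lincomb}) applied in the chart $(V_1,\psi_1)$ says that every tangent vector $w$ at $\Psi(p)$ satisfies $w = \sum_{j=1}^n w[y^j]\,\partial/\partial y^j|_{\Psi(p)}$. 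The whole theorem then reduces to identifying the coefficients $w[y^j]$ for the particular vector $w = \Psi_{*p}\!\left(\partial/\partial x^i|_p\right)$.

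First I would apply the expansion formula to $w = \Psi_{*p}\!\left(\partial/\partial x^i|_p\right)$, obtaining
\[ \Psi_{*p}\!\left(\frac{\partial}{\partial x^i}\Big|_p\right) = \sum_{j=1}^n \Psi_{*p}\!\left(\frac{\partial}{\partial x^i}\Big|_p\right)\![y^j]\;\frac{\partial}{\partial y^j}\Big|_{\Psi(p)}. \]
Next I would compute each coefficient directly from the definition of the tangent map. Since $\Psi_{*p}(v_p)[f] = v_p[f\circ\Psi]$ for every $f\in\mathcal{F}(\mathcal{N})$, taking $f = y^j$ and $v_p = \partial/\partial x^i|_p$ gives
\[ \Psi_{*p}\!\left(\frac{\partial}{\partial x^i}\Big|_p\right)\![y^j] = \frac{\partial}{\partial x^i}\Big|_p[y^j\circ\Psi] = \frac{\partial(y^j\circ\Psi)}{\partial x^i}(p). \]
Substituting this expression for the coefficients back into the expansion yields exactly the claimed formula.

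The argument is essentially bookkeeping rather than hard analysis, so the only point demanding care is verifying that the two main tools genuinely apply. For the expansion (\ref{eq:lincomb}) I need $\Psi_{*p}(\partial/\partial x^i|_p)$ to be a bona fide element of $\mathcal{T}_{\Psi(p)}\mathcal{N}$, which is precisely the content of the preceding theorem; and I need the component functions $y^j$ of $\psi_1$ to be differentiable functions on $\mathcal{N}$, so that $y^j\in\mathcal{F}(\mathcal{N})$ and the symbol $\partial(y^j\circ\Psi)/\partial x^i$ is defined. This holds because $\Psi$ is differentiable and coordinate functions are smooth in their own chart. Strictly speaking $y^j$ is only defined on $V_1$, so one should either work with a local representative or replace $y^j$ by a globally differentiable function agreeing with it near $\Psi(p)$ --- the same globalization device used in the proof of Theorem \ref{sec:tpmisofdimn}; since a tangent vector depends only on the germ of a function at the point, this localization causes no difficulty. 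With these two applicability checks in place, the chain of equalities closes and the proof is complete.
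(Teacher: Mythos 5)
Your proposal is correct and follows essentially the same route as the paper: both expand $\Psi_{*p}(\partial/\partial x^i|_p)$ in the basis $\partial/\partial y^j|_{\Psi(p)}$ and identify the coefficients by applying the vector to the coordinate functions $y^j$ via the definition $\Psi_{*p}(v_p)[f]=v_p[f\circ\Psi]$ (the paper writes unknown coefficients $a_{ij}$ and evaluates both sides on $y^k$, which is the same computation). Your added remark about localizing $y^j$ to make it an element of $\mathcal{F}(\mathcal{N})$ is a point the paper silently glosses over, and it is handled correctly.
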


\begin{proof}
Since $\Psi_{*p} \left( \left. \frac{\partial}{\partial {x^i}}\right|_p \right)\in\mathcal{T}_{\Psi (p)}\mathcal{N}$, and $\left( \left. \frac{\partial}{\partial {y^1}}\right|_{\Psi (p)}, \ldots ,\left. \frac{\partial}{\partial {y^n}}\right|_{\Psi (p)}\right) $ according to Theorem \ref{sec:tpmisofdimn} form a basis for $\mathcal{T}_{\Psi (p)}\mathcal{N}$, we may write

\begin{equation}
\label{eq:psi*}
\Psi_{*p} \left( \left. \frac{\partial}{\partial {x^i}}\right|_p \right) =\sum_{j=1}^{n} {a_{ij}\left. \frac{\partial}{\partial {y^j}}\right|_{\Psi (p)}}.
\end{equation}
We apply both sides to $y^k$, and we get:

\begin{align}
\label{eq:psi*1}
\Psi_{*p} \left( \left. \frac{\partial}{\partial {x^i}}\right|_p \right) [y^k] &=\left( \left. \frac{\partial}{\partial {x^i}}\right|_p \right) [y^k \circ \Psi ] = \left. \frac{\partial {(y^k \circ \Psi)}}{\partial {x^i}}\right|_p\\
a_{ik} &= \left( \sum_{j=1}^{n} {a_{ij}\left. \frac{\partial}{\partial {y^j}}\right|_{\Psi (p)}}\right) [y^k].
\end{align}
And from (\ref{eq:psi*}) and (\ref{eq:psi*1}) we get $a_{ik}= \left. \frac{\partial {(y^k \circ \Psi)}}{\partial {x^i}}\right|_p $. Similarly $a_{ij}= \left. \frac{\partial {(y^j \circ \Psi)}}{\partial {x^i}}\right|_p $ which gives us the possibility to rewrite (\ref{eq:psi*}) as 
\[ \Psi_{*p}\left( \left. \frac{\partial}{\partial {x^i}}\right|_p \right) = \sum_{j=1}^{n} {\left. \frac{\partial {(y^j \circ\Psi)}}{\partial {x^i}}(p)\frac{\partial}{\partial {y^j}} \right|_{\Psi (p)}}\]
which is the desired formula.\quad $\blacksquare$
\end{proof}

\clearpage{\pagestyle{empty}\cleardoublepage}

\section{The Projective Space}
Projective geometry is concerned with properties of incidence, that is properties which are invariant under stretching, translation or rotation of the plane. In the axiomatic development of the theory the notation of distance and angle will play no part. One of the most important examples of the theory is the Real Projective Plane. Here we give an introduction to the subject and a synthetic development gives an understanding to the Real Projective Space and the Complex Projective Space.

\subsection{The Real Projective Plane $\mathbb{R}{\mathbb{P}}^2$}

\begin{defn}
A set of points $P_{1}, \ldots ,P_{n}$ is said to be collinear if there exists a line $l$ containing them all.
\end{defn}

\begin{defn}
An affine plane is a set whose elements are called points and a set of subsets, called lines, satisfy the following three axioms.
\begin{description}
\item[A 1 ] Given two distinct points $P$, $Q$, there is one and only one line containing both $P$ and $Q$.
\item[A 2 ] Given a line $l$ and a point $P$, not on $l$, there is one and only one line $m$ which is parallel to $l$, and which passes through $P$.
\item[A 3 ] There exist three non-collinear points.
\end{description}
\end{defn}

\begin{example}
The ordinary plane, known from the Euclidean geometry, satisfies the axioms \textbf{A1--A3}, and therefore is an affine plane. A convenient way of representing this plane is by introducing Cartesian coordinates. Thus a point $P$ is represented as a pair $(x_1,x_2)$ of real numbers.
\end{example}

\begin{defn}
A relation $\sim$ is an equivalence relation if it has the following three properties.
\begin{description}
\item[1. Reflexive:] $a \sim a$
\item[2. Symmetric:] $a \sim b \Longrightarrow b \sim a$
\item[3. Transitive:] $a \sim b \; \wedge \; b \sim c \; \Longrightarrow \; a\sim c$
\end{description}
\end{defn}

\begin{example}
We say that two lines are parallel if they are equal, or if they have no points in common. Parallelism is an equivalence relation.\\
As a proof we check the three properties.  
\begin{enumerate}
\item Any line is parallel to itself, by definition.
\item $l\| m \Longrightarrow m\| l$, by definition.
\item If $l\| m$ and $m\| n$, we wish to prove that $l\| n$. Suppose $l$ is not parallel to $n$ and there is a point $P$ on the intersection of $l$ and $n$, i.e. $(P\in l\cap n)$. Then $l$ and $n$ are both parallel to $m$ and pass through $P$ which is impossible by axiom \textbf{A2}. We conclude that $l\cap n=\emptyset$, so $l||n$.
\end{enumerate}
\end{example}

\begin{defn}
A pencil of lines is either the set of all lines passing through some point $P$, or the set of all lines parallel to some line $l$.
\end{defn}

\begin{defn}
Let $A$ be an affine plane. For each line $l\in A$ we will call the pencil of lines parallel to $l$, an ideal point and denote it by $P^*$.
\end{defn} 

\begin{defn}
$S$ is a completion of $A$ if the points of $S$ are the points of $A$ plus all the ideal points of $A$. A line in $S$ is either:
\begin{itemize}
\item An ordinary point $l$ of $A$ plus the ideal point $P^*$ of $l$.
\item The line at infinity, consisting of all idea points of $A$.
\end{itemize}
\end{defn}

\begin{defn}
A projective plane ${\mathbb{P}}^2$ is a set whose elements are called points and a set of subsets, called lines, satisfy the following four properties.
\begin{description}
\item[P1.] Two distinct points $P$ and $Q$ of ${\mathbb{P}}^2$ lie on one and only one line.
\item[P2.] Any two lines meet in at least one point.
\item[P3.] There exist three non-collinear points.
\item[P4.] Every line contains at least three points.
\end{description}
\end{defn}

\subsubsection{Homogeneous coordinates in $\mathbb{P}^2$}
An easy way to introduce the homogeneous coordinates is to start with another construction of the real projective plane then the one we have done earlier.\\
Let $\mathbb{R}^3$ be the ordinary Euclidean 3--space, and let $O$ be a point of $\mathbb{R}^3$. Let $L$ be the set of lines through $O$. Define a point of $L$ to be a line through $O$ in $\mathbb{R}^3$ and define a line in $L$ to be the collection of lines through $O$ which all lie in the same plane through $O$. Then $L$ satisfies the properties \textbf{P1--P4} and so it is a projective plane. Now we are ready to introduce the homogeneous coordinates.\\
A point of ${\mathbb{P}}^2$ is a line $l$ through $O\in \mathbb{R}^3$. We will represent the point $P$ of $\mathbb{P}^2$ corresponding to $l$ by choosing any point $(x_1,x_2,x_3)\neq (0,0,0)$ on $l$. The numbers $(x_1,x_2,x_3)$ are the homogeneous coordinates of $P$.
\begin{figure}[htb]
\centering
\epsfig{file=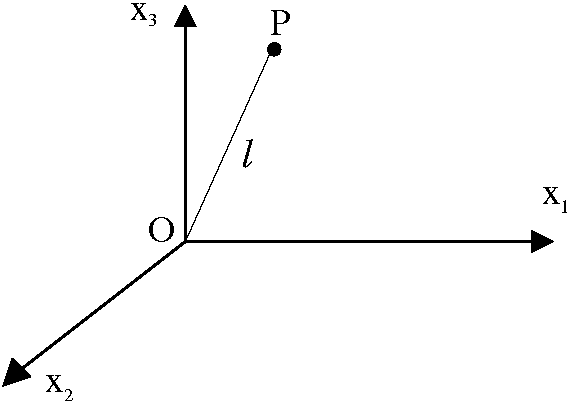, height=5cm}
\caption{The homogeneous coordinates $(x_1,x_2,x_3)$ of the point $P$.}
\label{fig:hcoord}
\end{figure}

Any other point of $l$ has the coordinates$(\lambda x_1, \lambda x_2, \lambda x_3)$ where $\lambda \in \mathbb{R} \setminus \{0\}$. Thus ${\mathbb{P}}^2$ is the collection of triples $(x_1, x_2, x_3)$ of real numbers, not all zero, and two triples $(x_1, x_2, x_3)$ and $(x'_1, x'_2, x'_3)$ represent the same point if and only if there exist $\lambda \in \mathbb{R}$ such that $x_i =\lambda x'_i$ for $i=1, 2, 3$. Since the equation of a plane in ${\mathbb{R}}^3$ passing through $O$ is of the form $\sum_{i=1}^{3}{a_i x_i}=0$ for $(a_1, a_2, a_3)\neq (0,0,0)$, we see that this is also the equation of a line of ${\mathbb{P}}^2$ in terms of the homogeneous coordinates.

\subsubsection{Topological view}
One can even look at the real projective plane from a topological point of view. There are useful topological descriptions of some elementary surfaces that are obtained from identifying edges of a square. For example, if we identify the top and the bottom edges of a square we obtain a cylinder. We describe this identification by means of a square with an arrow along the top edge and an arrow pointing in the same direction along the bottom edge. Now consider a square with the top and bottom edges identified, but in reverse order. This means that we twist one of the edges by $\pi$ before pasting them together. The resulting surface is the M\"{o}bius strip.\\
The surface that results when we identify the edges of the square with both the two vertical arrows pointing in different directions and the two horizontal arrows pointing in different directions is the real projective plane.
\begin{figure}[htb]
\centering
\subfigure[The Cylinder]{\epsfig{file=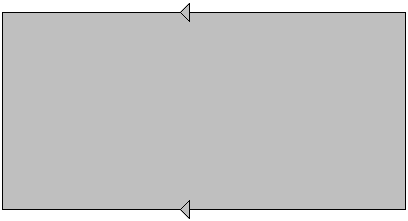, height=2cm}}
\subfigure[The M\"{o}bius Strip]{\epsfig{file=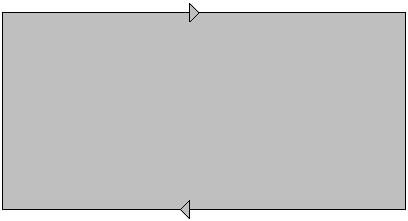, height=2cm}}
\subfigure[The Real Projective Plane]{\epsfig{file=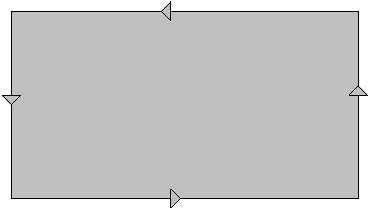, height=2cm}}
\label{fig:planes}
\end{figure}

One very important property of the M\"{o}bius strip and $\mathbb{R{P}}^2$ is that they both are nonorientable. One property of an n--dimensional nonorientable surface is that it can not be embedded in $\mathbb{R}^n$. To give a better understanding of this notion we first define an orientation of a manifold.

\begin{defn}
Let $\mathcal{M}$ be a connected m-dimensional differentiable manifold. At a point $p\in \mathcal{M}$, the tangent-space $\mathcal{T}_p\mathcal{M}$ is spanned by the basis $\{e_{\alpha} \}=\{{\partial}/{\partial x^{\alpha}} \}$, where $x^{\alpha}$ is the the local coordinate on the chart $U_i$ to which $p$ belongs. Let $U_j$ be another chart such that $U_i \cap U_j \neq \emptyset$, with the local coordinates $y^{\beta}$. If $p \in U_i \cap U_j $, then $\mathcal{T}_p\mathcal{M}$ is spanned by either $\{e_{\alpha}\}$ or $\tilde{e}_{\beta}= \{{\partial}/{\partial y^{\beta}} \}$. The basis changes as $\tilde{e}_{\beta}=({\partial x^{\alpha}}/{\partial y^{\beta}})e_{\alpha}$. If $det({\partial x^{\alpha}}/{\partial y^{\beta}})>0$ on $U_i \cap U_j$, $\{e_{\alpha}\}$ and $\{\tilde{e}_{\beta}\}$ are said to define the same orientation on $U_i \cap U_j$ and if $det({\partial x^{\alpha}}/{\partial y^{\beta}})<0$, they define the opposite orientation.
\end{defn}

\begin{defn}
Let $\mathcal{M}$ be a differentiable manifold. $\mathcal{M}$ is orientable if for any overlapping charts $U_i$ and $U_j$, there exist local coordinates $\{x^{\alpha}\}$ for $U_i$ and $\{y^{\beta}\}$ for $U_j$ such that $det({\partial x^{\alpha}}/{\partial y^{\beta}})>0$. Otherwise $\mathcal{M}$ is nonorientable 
\end{defn}

\begin{example}
The M\"{o}bius strip which is obtained as we described earlier, is a nonorientable surface. See figure \ref{fig:mobiusstrip}.\\
As we walk along the strip the coordinates changes, $x^1=y^1$ and $x^2=-y^2$, thus the determinant is $-1$.

\begin{figure}[H]
\centering
\epsfig{file=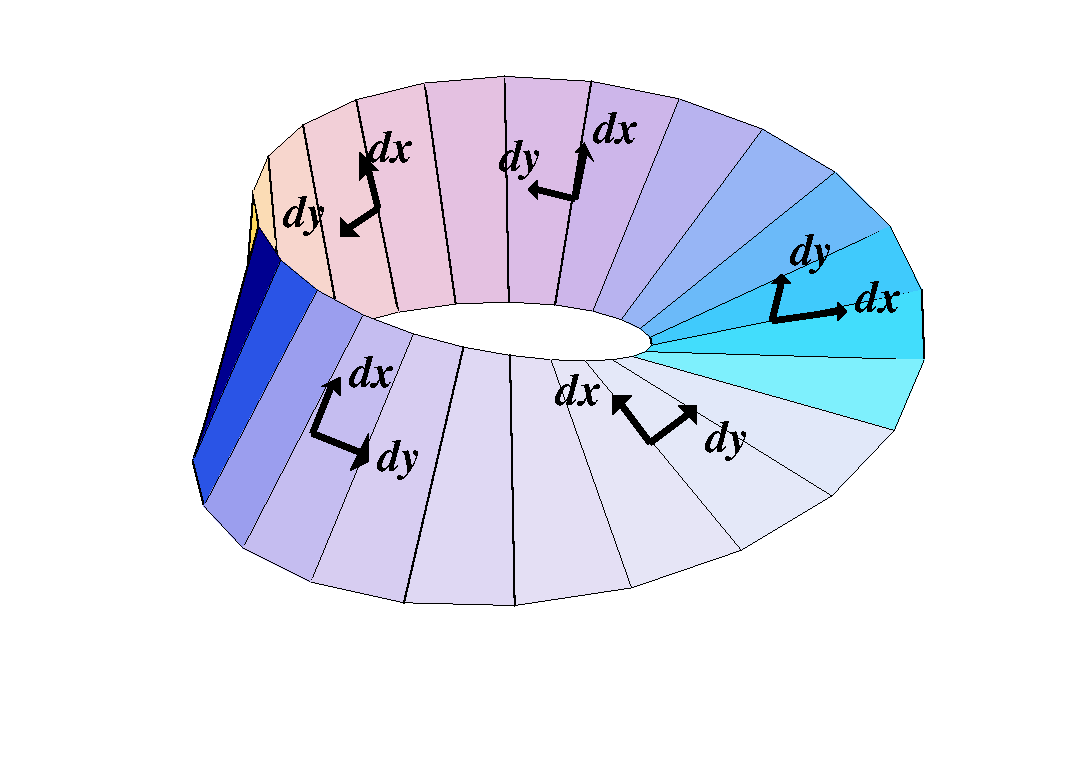, height=6cm, width=12cm}
\caption{The orientation of the coordinate system changes as we walk along the M\"{o}bius strip.}
\label{fig:mobiusstrip} 
\end{figure}
\end{example}

\subsubsection{Realization of $\mathbb{R{P}}^2$}
\begin{defn}
The pair of points $(p, -p)$ is called the pair of antipodal points.
\end{defn}

\begin{defn}
A map $f:{\mathbb{R}}^3\longrightarrow {\mathbb{R}}^3$ such that
\[ f(p)=f(-p) \]
is said to have the antipodal property.
\end{defn}

The real projective plane can even be thought of as a sphere with antipodal points identified, thus we can realize $\mathbb{R{P}}^2$ as the image of $S^2$ under a map $f$ which has the antipodal property.

\begin{example}
Steiner's Roman map $f_r (x,y,z)=(xy,yz,xz)$ has the antipodal property, and thus it realizes the real projective plane.\\
It is obvious that this map has the antipodal property, hence the map induces a map of $\mathbb{R{P}}^2$ onto $f_r(S^2(a))$. We call $f_r(S^2(a))$ Steiner's Roman surface of radius $a$. We can plot a portion of $f_r(S^2(a))$ by composing it with any patch on $S^2(a)$. Let us use the standard parameterization of the sphere, defined by:
\[ (u,v)\longmapsto (a \cos v \cos u, a\cos v\sin u, a\sin v) \]
Then the composition parameterizes all of Steiner's Roman surface of radius $a$. We get:
\[ \left( \frac{a^2\cos^{2} v\sin {2u}}{2}, \frac{a^2\sin u\sin {2v}}{2}, \frac {a^2\cos u\sin {2v}}{2}\right) \]
We plot this parameterized form of $f_r(S^2(a))$ and look at it from some different point of views:\\

\begin{figure}[H]
\centering
\subfigure[Back view.]{\epsfig{file=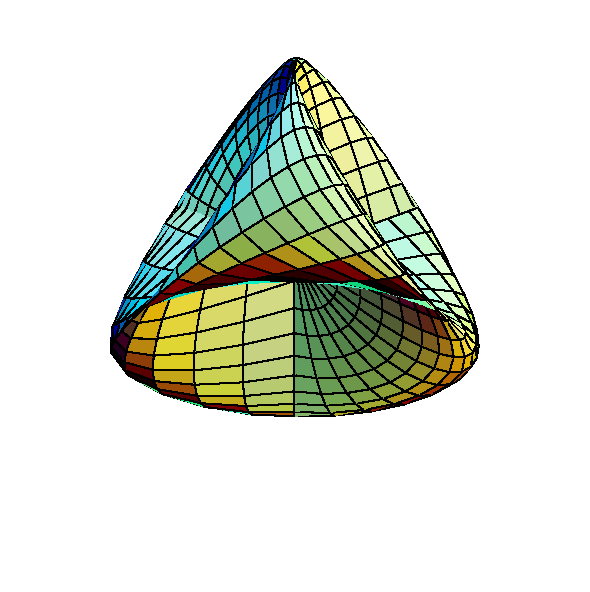, height=6cm}}
\subfigure[Front view.]{\epsfig{file=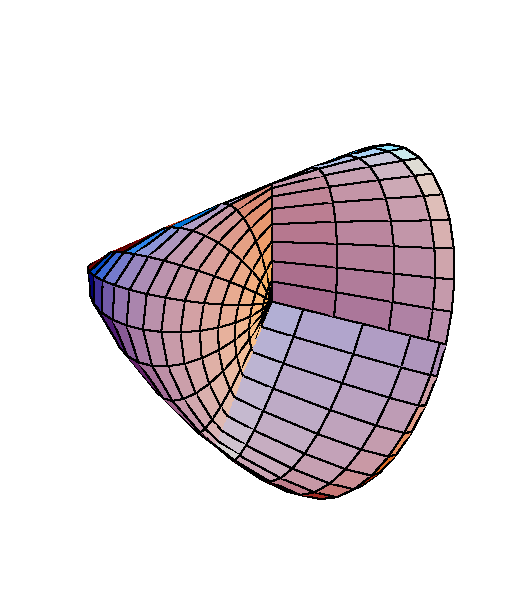, height=6cm}}
\subfigure[A front view with some part cut of.]{\epsfig{file=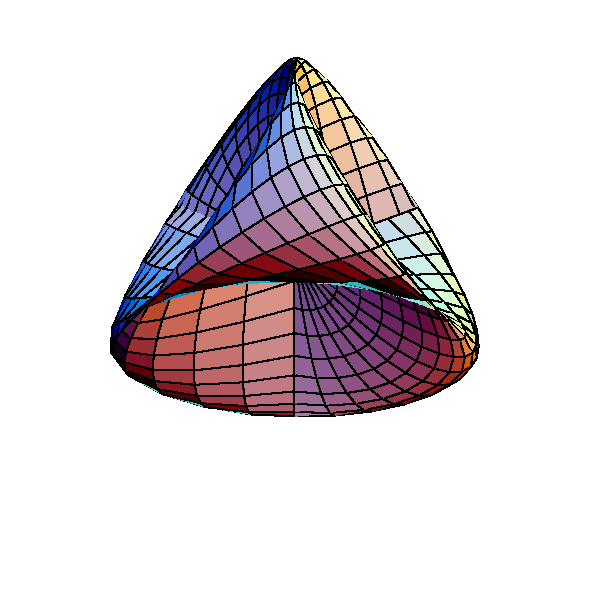, height=5cm}}
\subfigure[A side view with the same part cut of as in (c).]{\epsfig{file=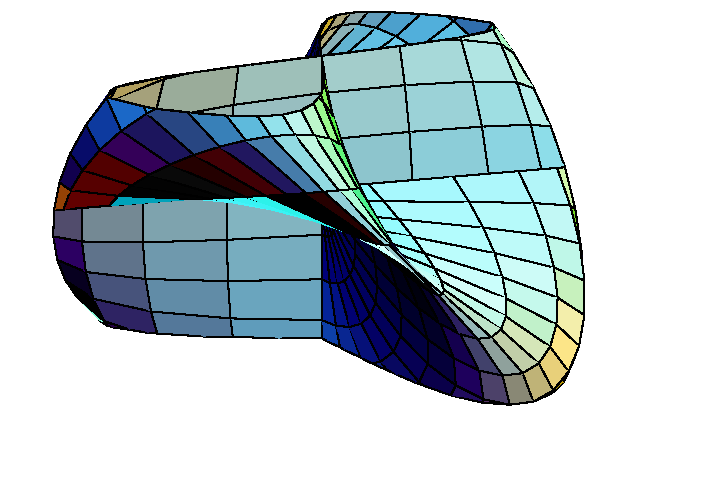, height=4cm}}
\caption{Steiner's Roman surface.}
\label{fig:RomanMap}
\end{figure}
\end{example}

\begin{example}
Another map called the Cross Cap with the antipodal property is given by
\[ f_c(x,y,z)= (yz, 2xy, x^2 - y^2) \]
We get a parameterization of the Cross Cap as in previous example by the standard parameterization of the sphere, then the composition gives the following parameterized form of $f_c(S^2(a))$
\[ \left( \frac{1}{2} a^2\sin u\sin {2v}, a^2\cos^{2} v\sin {2u}, a^2\cos {2u}\cos^{2} v\right) \]
This we can plot:
\begin{figure}[H]
\centering
\subfigure[One view of the Cross Cap.]{\epsfig{file=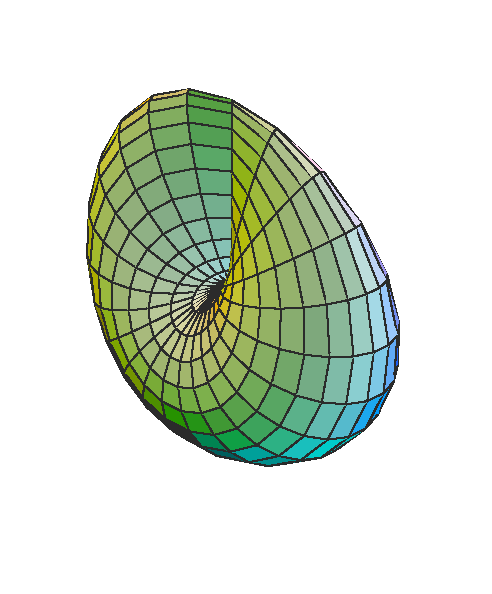, height=8cm}}
\subfigure[Another view of the Cross Cap. Its top is cut of.]{\epsfig{file=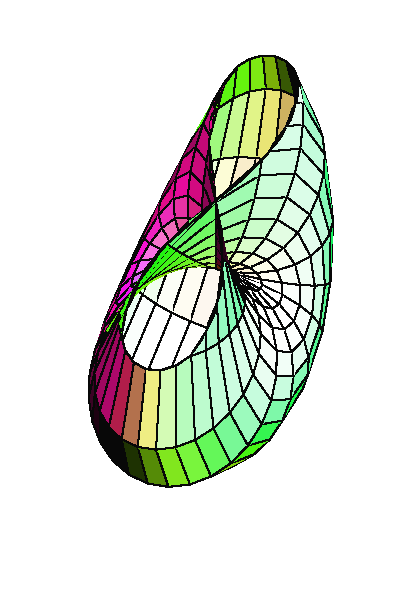, height=8cm}}
\caption{The Cross Cap.}
\label{fig:CrossCap}
\end{figure}
\end{example}

\subsection{The Real Projective Space $\mathbb{R{P}}^n$}
The real projective space denoted by $\mathbb{R{P}}^n$ is as we will see defined similarly to $\mathbb{R{P}}^2$, the difference is that the dimension of $\mathbb{R{P}}^n$ is usually greater than $2$.

\begin{defn}
The real projective space is the set of lines through the origin in $\mathbb{R}^{n+1}$. If $x=(x^0,\ldots ,x^n)\neq 0$, $x$ generates a line through the origin. Note that $y\in\mathbb{R}^{n+1}$ defines the same line as $x$ if there exists a real number $\delta\in\mathbb{R}^{n+1}\setminus\{0\}$ such that $y=\delta x$. Introduce an equivalence relation $\sim$ by $x\sim y$ if there exists a $\delta\in\mathbb{R}^{n+1}\setminus\{0\}$ such that $y=\delta x$. Then we introduce the following expression:
\[ \mathbb{R{P}}^n =( \mathbb{R}^{n+1}\setminus\{0\}) /\sim. \]
\end{defn}

\begin{defn}
The numbers $x^0,\ldots ,x^n$ are called the homogeneous coordinates of the equivalence class $[x]$ of $x=(x^{0}, \ldots ,x^{n})$ in $\mathbb{RP}^{n}$.
\end{defn}

Let $\pi : \mathbb{R}^{n+1} \setminus \{0\} \longrightarrow \mathbb{RP}^{n}$ be the natural projection. Thus 
\[
\pi (x)=[x]=\mbox{ ( the line through $0$ and $x$ in $\mathbb{R}^{n+1}$ ), $x\in \mathbb{R}^{n+1} \setminus \{ 0\} $.}
\]
We endow $\mathbb{RP}^{n}$ with the following topology. A subset $U\subset \mathbb{RP}^{n}$ is open if and only if $\pi^{-1}(U)$ is open in $\mathbb{R}^{n+1} \setminus \{ 0\}$. It can be shown that with this topology $\mathbb{RP}^{n}$ is compact. $\mathbb{R{P}}^n$ is in fact an $n$--dimensional manifold, an $(n+1)$--dimensional space with one--dimensional degree of freedom killed, thus the homogeneous coordinates $x^0,\ldots ,x^n$ that consists of $n+1$ elements can not be a good coordinate system. So we introduce the inhomogeneous coordinate system which is more useful than the homogeneous coordinate system.

\begin{defn}
Take the coordinate neighborhood $U_i$ as the set of lines with $x^i\neq 0$, that is $U_{i}=\{ [x^{0}, \ldots ,x^{n}]:x^{i}\neq 0\}$. Then we can introduce the inhomogeneous coordinates $\xi ^j_{(i)}$ on $U_i$ by
\[ 
\xi ^j_{(i)} = \frac{x^j}{x^i} \quad , \quad j \neq i. 
\]
\end{defn}

The inhomogeneous coordinates $\xi_{(i)}=(\xi^0_{(i)}, \ldots, \xi^{i-1}_{(i)}, \xi^{i+1}_{(i)}, \ldots ,\xi^n_{(i)})$ are well defined on $U_i$ since $x^i \neq 0$, and furthermore they are independent of the choice of the representative of the equivalence class since
\[ 
\frac{y^j}{y^i}=\frac{\delta x^j}{\delta x^i}=\frac{x^j}{x^i}.
\]
$\xi_{(i)}$ gives the coordinate map $\varphi_i : U_i \longrightarrow \mathbb{R}^n$, i.e. 
\[ 
\varphi_i :[x^0,\ldots ,x^n] \longmapsto (x^0/x^i, \ldots , x^{i-1}/x^i, x^{i+1}/x^i, \ldots , x^n/x^i). 
\]

\begin{defn}
For $x=(x^0, \ldots, x^n)\in U_i\cap U_j$we assign two inhomogeneous coordinates $\xi^k_{(i)}=x^k/x^i$ and $\xi^k_{(j)}=x^k/x^j$. Then the coordinate transformation $\Psi_{ij}=\varphi_i \circ \varphi_j^{-1}$ is 
\[ 
\Psi_{ij} : \xi^k_{(j)}\longmapsto \xi^k_{(i)} =(x^j/x^i)\xi^k_{(j)}. 
\]
Thus $\Psi_{ij}$ is nothing but multiplication by $(x^j/x^i)$.
\end{defn}

Consequently, the family $\{ (U_{i},\varphi_{i}) : i=0,\ldots ,n \}$ forms a $C^{\infty}$--atlas on $\mathbb{RP}^{n}$, and so $\mathbb{RP}^{n}$ is an $n$--dimensional $C^{\infty}$--manifold. \\ 
Just in the same way that we could think of $\mathbb{R{P}}^2$ as the sphere with the antipodal points identified, we can define $\mathbb{R{P}}^n$ as the unit sphere $S^{n} \subset \mathbb{R}^{n+1} $ with antipodal points identified. Here comes a description:\\
As a representative of the equivalence class, we may take points $|x|=1$ on a line through the origin. These are points on the unit sphere. Since there are two points on the intersection of a line with $S^n$ we have to take one of them consistently, i.e. nearby lines are represented by nearby points in $S^n$. This amounts to taking the hemisphere. Note that the antipodal points on the boundary are identified by the definition, $(x^0,\ldots ,x^n)\sim - (x^0,\ldots ,x^n)$. This hemisphere is homeomorphic to the unit ball $\mathcal{B}^{n}$ in \real{n} with antipodal points on the boundary of $\mathcal{B}^{n}$ identified.

\subsection{The Complex Projective Space $\mathbb{C{P}}^n$}
\begin{defn}
$z=(z^0, \ldots ,z^n)\in \mathbb{C}^{n+1}$ determines a complex line through the origin if $z\neq 0$. Define an equivalence relation $\sim$ by $z\sim w$ if there exists a complex number $\lambda\neq 0$ such that $w=\lambda z$. Then the complex projective space can be expressed as follows:
\[ \mathbb{C{P}}^n=(\mathbb{C}^{n+1}\setminus \{0\})/\sim. \]
\end{defn}
Similarly to $\mathbb{R{P}}^n$, the $n+1$ numbers $(z^0, \ldots ,z^n)$ are called the homogeneous coordinates of $[z]$. The topology on $\mathbb{CP}^{n}$ and the atlas are introduced as in $\mathbb{RP}^{n}$ with $\mathbb{C}$ replacing $\mathbb{R}$. A chart $U_\mu$ is a subset of $\mathbb{C}^{n+1}\setminus \{0\}$ such that $z^{\mu}\neq 0$. In the chart $U_\mu$, the inhomogeneous coordinates are defined by $\xi^\nu_{(\mu)}= z^\nu /z^\mu$, for $\nu\neq\mu$. In $U_\mu\cap U_\nu\neq\emptyset$, the coordinate transformation $\Psi_{\mu\nu} : \mathbb{C}^n\longrightarrow\mathbb{C}^n$ is
\[ \Psi_{\mu\nu} :\xi^\kappa_{(\nu)} \longmapsto \xi^\kappa_{(\mu)} = (z^\nu / z^\mu)\xi^\kappa_{(\nu)}.\]
Accordingly $\Psi_{\mu\nu}$ is a multiplication by $(z^\nu / z^\mu)$, which is holomorphic. In other words $\mathbb{CP}^{n}$ is a complex manifold of dimension $n$.

\begin{defn}
The Grassmannian manifold $G_{k,n}(\mathbb{C})$ is the set of $k$--dimensional surfaces of $\mathbb{C}^n$.
\end{defn}

\begin{defn}
If we delete $(n-k)$ rows and $(n-k)$ columns from an $(n\times n)$ matrix, then the remaining elements form a $(k\times k)$ matrix. The determinant of the remaining $(k\times k)$ matrix is called the $k\times k$ minor of the $(n\times n)$ matrix. Sometimes one even calls just the $(k\times k)$ matrix the $k\times k$ minor of the $(n\times n)$ matrix. This we will do in the following theorem. Moreover it might be useful to remark that an $(n\times n)$ matrix has $\binom{n}{k}^2$, $k\times k$ minors.
\end{defn}

\begin{theorem}
The Grassmannian manifold $G_{k,n}(\mathbb{C})$ is indeed a complex manifold.
\end{theorem}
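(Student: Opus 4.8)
The plan is to generalize verbatim the construction used earlier to show that $\mathbb{CP}^{n}$ is a complex manifold, since $\mathbb{CP}^{n}$ is the special case $G_{1,n+1}(\mathbb{C})$. First I would fix a concrete model for the points of $G_{k,n}(\mathbb{C})$: a $k$-dimensional subspace of $\mathbb{C}^{n}$ is the row space of a $k\times n$ complex matrix $A$ of rank $k$, and two such matrices $A$, $A'$ span the same subspace if and only if $A'=gA$ for some $g\in GL(k,\mathbb{C})$. Thus I identify $G_{k,n}(\mathbb{C})$ with the quotient of the set of rank-$k$ matrices by the left action of $GL(k,\mathbb{C})$, and endow it with the quotient topology via the natural projection $\pi$, exactly as $\mathbb{RP}^{n}$ and $\mathbb{CP}^{n}$ were given the quotient topology.

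Next I would build the atlas. For each $k$-element subset $\sigma=\{\sigma_1<\cdots<\sigma_k\}\subset\{1,\ldots,n\}$, let $A_\sigma$ denote the $k\times k$ minor of $A$ obtained by keeping the columns indexed by $\sigma$, and set
\[
U_\sigma=\{[A]\in G_{k,n}(\mathbb{C}) : \det A_\sigma\neq 0\}.
\]
Since replacing $A$ by $gA$ multiplies $\det A_\sigma$ by $\det g\neq 0$, the condition depends only on the class $[A]$, so each of these $\binom{n}{k}$ sets $U_\sigma$ is well defined and open. On $U_\sigma$ there is a distinguished representative: left-multiplying $A$ by $A_\sigma^{-1}$ produces the identity in the columns indexed by $\sigma$, and the remaining $k\times(n-k)$ entries are then uniquely determined by $[A]$. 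Reading off these entries gives a bijection $\varphi_\sigma:U_\sigma\longrightarrow\mathbb{C}^{k(n-k)}$, which I would take as the coordinate map; this also exhibits the complex dimension as $k(n-k)$.

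Then I must verify the two manifold conditions. For the covering, any rank-$k$ matrix has $k$ linearly independent columns, hence some nonsingular $k\times k$ minor, so the $U_\sigma$ cover $G_{k,n}(\mathbb{C})$. For the transitions, on an overlap $U_\sigma\cap U_\tau$ the map $\varphi_\tau\circ\varphi_\sigma^{-1}$ takes the $\sigma$-normalized matrix $B=\varphi_\sigma^{-1}(\,\cdot\,)$, forms its $\tau$-minor $B_\tau$ (invertible precisely because we are in $U_\tau$), and reads off the entries of $B_\tau^{-1}B$ outside the $\tau$-columns. The step I expect to carry the real content is showing this map is holomorphic: by Cramer's rule each entry of $B_\tau^{-1}$ is a polynomial in the entries of $B$ divided by $\det B_\tau$, and $\det B_\tau$ is nowhere zero on $U_\sigma\cap U_\tau$, so every component of $\varphi_\tau\circ\varphi_\sigma^{-1}$ is a rational function with non-vanishing denominator, hence holomorphic. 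This is the exact analogue of the earlier observation that the coordinate changes $\Psi_{\mu\nu}$ on $\mathbb{CP}^{n}$ were holomorphic multiplications by $z^{\nu}/z^{\mu}$.

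Finally I would record the topological hypotheses required by the definition of a complex manifold, namely that $G_{k,n}(\mathbb{C})$ is Hausdorff: since each $\varphi_\sigma$ is a homeomorphism onto $\mathbb{C}^{k(n-k)}$, two distinct planes lying in a common chart are separated by their normalized coordinates, and planes not sharing a chart are separated by a limit-of-representatives argument as for $\mathbb{CP}^{n}$. With the covering, the homeomorphism charts, and the holomorphic transition maps in hand, the family $\{(U_\sigma,\varphi_\sigma)\}$ is a holomorphic atlas, so $G_{k,n}(\mathbb{C})$ is a complex manifold of dimension $k(n-k)$.
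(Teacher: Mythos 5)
Your construction is essentially the same as the paper's: identify $G_{k,n}(\mathbb{C})$ with the quotient of the rank-$k$ matrices by the left $GL(k,\mathbb{C})$-action, cover it by the charts where a given $k\times k$ minor is invertible, and use the $k(n-k)$ remaining entries of the normalized representative $A_\sigma^{-1}A$ as coordinates. In fact your write-up is more complete than the paper's, which explicitly only sketches the atlas and omits the verification that the transition maps are holomorphic (your Cramer's-rule argument) and the Hausdorff check; both additions are correct and welcome.
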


\begin{proof}
We will sketch here the construction of a complex atlas on $G_{k,n}(\mathbb{C})$. Let $M_{k,n}(\mathbb{C})$ be the set of $(k\times k)$ matrices of rank $k$, $(k\leq n)$. Take $A=(a_{ij})\in M_{k,n}(\mathbb{C})$ and define $k$ vectors $a_i \mbox{ }(1\leq i\leq k)$ in $\mathbb{C}^n$ by $a_i=(a_{ij})$. Since $rank\mbox{ }A=k$, the $k$ vectors $a_i$ are linearly independent and span a $k$--dimensional plane in $\mathbb{C}^n$.\\
Let $GL(k,\mathbb{C})$ be the group of all nonsingular linear transformations of the $k$--dimensional complex vector space. Take $g\in GL(k,\mathbb{C})$ and consider a matrix $ \bar{A} =gA\in M_{k,n}(\mathbb{C})$, then $\bar{A}$ defines the same $k$--planes as $A$.\\
Introduce an equivalence relation $\sim$ by $\bar{A}\sim A$ if there exists $g\in GL(k,\mathbb{C})$ such that $\bar{A}=gA$.\\
We identify $G_{k,n}(\mathbb{C})$ with the coset space $M_{k,n}(\mathbb{C})/GL(k,\mathbb{C})$. Take $A\in M_{k,n}(\mathbb{C})$ and let $(A_1,\ldots ,A_l)$ be the collection of all $k\times k$ minors of $A$. Then $l=\binom{n}{k}$ because it is equal to the number of $k\times k$ minors of an $k\times n$ matrix. Since $rank \mbox{ } A=k$, there exists some $A_\alpha\mbox{ } (1\leq \alpha \leq l)$ such that $det\mbox{ } A\neq 0$.\\
Let us assume that the minor $A_1$ made of the first $k$ columns has non-vanishing determinant. Then $A=(A_1,\tilde{A}_1)$ where $\tilde{A}_1$ is a $k\times (n-k)$ matrix. Then $A_1^{-1}\cdot A = (\mathbb{I}_k,A_1^{-1}\cdot \tilde{A}_1)$, Where $\mathbb{I}_k$ is the $k\times k$ unit matrix. Note that $A_1^{-1}$ always exists since $det\mbox{ } A_1\neq 0$.\\
Thus the degrees of freedom are given by the entries of the $k\times (n-k)$ matrix $A_1^{-1}\cdot \tilde{A}_1$. We denote this subset of $G_{k,n}(\mathbb{C})$ by $U_1$, where $U_1$ is a coordinate neighborhood whose coordinates are given by $k\cdot (n-k)$ entries of $A_1^{-1}\cdot \tilde{A}_1$. In the case that $det\mbox{ } A_\alpha\neq 0$, where $A_\alpha$ is composed of the columns $(i_1,\ldots ,i_k)$, we multiply $A_1^{-1}$ to obtain the representative of the set to which $A$ belongs to be. \\
Similarly to the case with $A_1$, but written in matrix form we get:
%\[ 
%Column \begin{array}{ccccccc}
%\ldots & i_1  & \ldots & i_2 & \ldots & i_k & \mbox{ }\\
%\end{array}
%\]
\begin{equation}
\label{eq:matris} 
A^{-1}_{\alpha}\cdot A =
\left(
\begin{array}{ccccccc}
\mathbf{Column}     & \mathbf{i_1}    & \ldots & \mathbf{i_2} & \ldots & \mathbf{i_k} & \ldots \\         
\ldots & 1          & \ldots & 0        & \ldots & 0            & \ldots \\
\ldots & 0          & \ldots & 1        & \ldots & 0            & \ldots \\
\ldots & \vdots     & \ldots & \vdots   & \ldots & \vdots       & \ldots \\
\ldots & 0          & \ldots & 0        & \ldots & 1            & \ldots \\
\end{array}
\right)
\end{equation}
where the entries not written explicitly form a $k\times(n-k)$ matrix. We denote this subset of $M_{k,n}(\mathbb{C})$ with $det\mbox{ }A_\alpha\neq 0$ by $U_\alpha$. Now we have defined the chart $U_\alpha$ to be a subset of $G_{k,n}(\mathbb{C})$ such that $det\mbox{ }A_\alpha\neq 0$. The $k\cdot (n-k)$ coordinates on $U_\alpha$ are given by the entries of the $k\times (n-k)$ matrix $A^{-1}_\alpha A$.    $\blacksquare$
\end{proof} 

The relation between the Projective space and the Grassmannian manifold is now evident. An element of $M_{1,n+1}(\mathbb{C})$ is a vector $A=(x^0,\ldots ,x^n)$. Since the $\alpha$--th minor $A_\alpha$ of $A$ is a number $x^\alpha$, the condition $det\mbox{ }A_\alpha\neq 0$ becomes $x^\alpha\neq 0$. The representation (\ref{eq:matris}) is just the inhomogeneous coordinates
\[ 
{(x^\alpha )}^{-1}\cdot (x^0,\ldots ,x^\alpha ,\ldots ,x^n)=(x^0 / x^{\alpha} , \ldots ,x^{\alpha -1}/x^{\alpha} ,x^{\alpha +1}/x^{\alpha} ,\ldots ,x^n/x^{\alpha} ). 
\]

\begin{corollary}
$G_{1,n+1}(\mathbb{C})=\mathbb{CP}^{n}$.
\end{corollary}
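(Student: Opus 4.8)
The plan is simply to specialize the general Grassmannian construction of the preceding theorem to the case $k=1$ (with $n$ replaced by $n+1$) and to verify that every ingredient collapses exactly onto the definition of $\mathbb{CP}^n$. First I would identify the matrix space: a $1\times(n+1)$ matrix of rank $1$ is precisely a nonzero row vector $A=(x^0,\ldots,x^n)\in\mathbb{C}^{n+1}\setminus\{0\}$, so $M_{1,n+1}(\mathbb{C})=\mathbb{C}^{n+1}\setminus\{0\}$.

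Next I would examine the acting group. Here $GL(1,\mathbb{C})$ is the group of nonsingular linear transformations of the $1$-dimensional complex vector space, i.e. $GL(1,\mathbb{C})=\mathbb{C}\setminus\{0\}$, acting on $A$ by $A\mapsto gA=\lambda(x^0,\ldots,x^n)$ for $\lambda\in\mathbb{C}\setminus\{0\}$. Consequently the equivalence relation $\bar A\sim A\Leftrightarrow\bar A=gA$ reads $(x^0,\ldots,x^n)\sim\lambda(x^0,\ldots,x^n)$, which is verbatim the relation $\sim$ used earlier to define $\mathbb{CP}^n=(\mathbb{C}^{n+1}\setminus\{0\})/\sim$.

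Combining these, the identification $G_{k,n}(\mathbb{C})=M_{k,n}(\mathbb{C})/GL(k,\mathbb{C})$ established in the theorem yields in this case
\[
G_{1,n+1}(\mathbb{C})=(\mathbb{C}^{n+1}\setminus\{0\})/\sim=\mathbb{CP}^n,
\]
which is the assertion. To make the match complete at the level of complex manifolds rather than merely as sets, I would finally note that the atlases agree: the $1\times1$ minors of $A$ are the single entries $x^\alpha$, so the chart condition $\det A_\alpha\neq 0$ becomes $x^\alpha\neq 0$, and the representative $A_\alpha^{-1}A$ produced by the theorem's construction is exactly the tuple of inhomogeneous coordinates $x^\nu/x^\alpha$. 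This is precisely the content of the paragraph immediately preceding the corollary.

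I do not anticipate any genuine obstacle, since the statement is essentially an unwinding of definitions. The only point requiring a little care is to confirm that the two equivalence relations and the two coordinate systems literally coincide rather than being merely abstractly isomorphic, but both checks are immediate once $GL(1,\mathbb{C})$ is recognized as scalar multiplication by $\mathbb{C}\setminus\{0\}$.
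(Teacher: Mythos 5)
Your proposal is correct and follows the same route as the paper, which proves the corollary in the paragraph immediately preceding it by specializing the Grassmannian construction to $k=1$: the minors become the entries $x^\alpha$, the chart condition becomes $x^\alpha\neq 0$, and $A_\alpha^{-1}A$ becomes the inhomogeneous coordinates. Your additional explicit identification of $GL(1,\mathbb{C})$ with $\mathbb{C}\setminus\{0\}$ and of the two equivalence relations is a small but welcome clarification of the same argument.
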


\clearpage{\pagestyle{empty}\cleardoublepage}

\section{The Schwarzian Curvature}
In this section we start with constructing the moving frame on curves in the complex projective space, in terms of their liftings. Then we define the Schwarzian curvatures denoted by $\kappa_i$ and give the formulas for the $\kappa$'s for curves in $\mathbb{CP}^n$. We finish with some ``Low-dimension'' examples and transformation rules for the change of coordinates.\\

Let $\Phi : \mathcal{D} \longrightarrow\mathbb{CP}^{n} $ be an analytic curve in the $n$--dimensional complex projective space, where \man{D} is the unit disc in $\mathbb{C}$. We will assume that $\Phi$ can be lifted to a holomorphic curve $f : \mathcal{D} \longrightarrow \mathbb{C}^{n+1} \setminus \{ 0 \}$ such that $f, f', \ldots ,f^{(n)}$ form a linearly independent set of vector valued functions. Two such liftings $f_1$ and $f_2$ are equivalent if and only if $f_1 = \lambda f_2$, where $\lambda$ is an analytic nonzero function. \\
Now we construct the moving frame on the curve $f$ as follows.\\
As the first vector we take the vector valued function $\nu = \lambda f$, $\lambda\neq 0$ and the remaining vectors can be obtained from $\nu$. We denote them by $e_1, \ldots ,e_n$.
\[
\left\{ \begin{array}{l}
e_1={\nu}',\\ e_2=e'_{1}, \\ \ldots  \\ e_n=e'_{n-1}.
\end{array} \right.
\]
Then
\begin{align}
\label{eq:det1}
det \left[ \begin{array}{cccc}
\uparrow     & \uparrow    &        & \uparrow \\
\nu          & e_1         & \ldots & e_n      \\
\downarrow   &  \downarrow &        &\downarrow 
\end{array} \right]        &= det \left[ \begin{array}{ccc}
\uparrow     & \uparrow    &  \\
\lambda f    & (\lambda' f +\lambda f')        & \ldots\\
\downarrow   &  \downarrow &          
\end{array} \right].
\end{align}
The general formula for the derivatives is a suitable formula to look at. The reason is that
\[
(\lambda f)^{(m)}=\sum_{j=0}^{m}{\binom{m}{j}\lambda^{(j)}f^{(m-j)}}
\]
tells us how the derivatives of $\lambda f$ look like. We see that in the expression of the $m$--th derivative of $\lambda f$, there is only one ``element'' that is not repeated in the previous columns, namely $\lambda f^{(m)}$. Thus the $(m+1)$st column of (\ref{eq:det1}) is 

\[
\left[ \begin{array}{c}
\uparrow \\
(\lambda f)^{(m)}=\sum_{j=0}^{m}{\binom{m}{j}\lambda^{(j)}f^{(m-j)}} \\
\downarrow 
\end{array} \right] = \left[ \begin{array}{c}
\uparrow \\
\lambda f^{(m)} \\
\downarrow
\end{array} \right] + \ldots + \left[ \begin{array}{c}
\uparrow \\
\lambda^{(m)} f \\
\downarrow
\end{array} \right],
\]
and since two linearly dependent columns, or rows, make the determinant to vanish, we get
\[ 
det \, \left[ \begin{array}{ccc}
        &\uparrow             &              \\
\ldots  & (\lambda f)^{(m)}   & \ldots       \\
        &\downarrow           &   
\end{array} \right]= det \, \left[ \begin{array}{ccc}
       & \uparrow             &         \\
\ldots & \lambda f^{(m)}      & \ldots  \\
       & \downarrow           &       
\end{array} \right].
\]
Note that we only have written what happens in the $(m+1)$st column of (\ref{eq:det1}). Repeating this for the other columns in (\ref{eq:det1}) we obtain

\begin{align}
\label{eq:det3}
det \left[ \begin{array}{cccc}
\uparrow     & \uparrow    &        & \uparrow \\
\nu          & e_1         & \ldots & e_n      \\
\downarrow   &  \downarrow &        &\downarrow 
\end{array} \right] &= det \left[ \begin{array}{cccc}
\uparrow     & \uparrow    & & \uparrow \\
\lambda f          & \lambda f'        & \ldots  & \lambda f^{(n)}     \\
\downarrow   &  \downarrow & & \downarrow          
\end{array} \right] \nonumber\\
&= \lambda^{n+1} det \left[ \begin{array}{cccc}
\uparrow     & \uparrow    & & \uparrow \\
f          & f'        & \ldots  & f^{(n)}     \\
\downarrow   &  \downarrow & & \downarrow          
\end{array} \right].
\end{align}
We also know that we can choose $\lambda$ so that

\begin{equation}
\label{eq:lambdan+1}
\lambda^{n+1} det \left[ \begin{array}{cccc}
\uparrow     & \uparrow    & & \uparrow \\
f          & f'        & \ldots  & f^{(n)}     \\
\downarrow   &  \downarrow & & \downarrow          
\end{array} \right] =1, 
\end{equation}
since $\Phi$ is an analytic on \man{D}, then the components of $f$ must be analytic in the one--dimensional complex sense. According to this, the elements of the derivatives of $f$ also are analytic and since the determinant is nothing but a sum of products of the elements of $f$ and its derivatives up to order $n$, the determinant is analytic on \man{D} and  $\frac{1}{det [f, \ldots ,f^{(n)}]}$ is analytic as well since the determinant is never equal to zero. We want to show the existence of an analytic function $\lambda : \mathcal{D} \longrightarrow \mathbb{C}\setminus \{ 0 \}$ such that 

\[
\lambda^{n+1} = \frac{1}{det \left[ \begin{array}{cccc}
\uparrow     & \uparrow    &         & \uparrow \\
f            & f'          & \ldots  & f^{(n)}     \\
\downarrow   &  \downarrow &         & \downarrow          
\end{array} \right]}.
\]
Let $\varphi (z) = \frac{1}{det [f, \ldots ,f^{(n)}]}$, $z \in \mathcal{D}$. Then $\varphi : \mathcal{D} \longrightarrow \mathbb{C}\setminus \{ 0 \}$ is analytic and hence so is $\frac{\varphi'}{\varphi}$. Define $\psi : \mathcal{D} \longrightarrow \mathbb{C}$ by 

\[
\psi (z) = \int_{[0,z]} \frac{\varphi' (w)}{\varphi (w)}\, dw \; , \; z \in \mathcal{D}.
\]
Then $\psi$ is analytic and $\psi' = \frac{\varphi'}{\varphi}$. By adding a constant to $\psi$ ( if necessary ) we may suppose that
\begin{align}
\label{eq:exppsi}
e^{\psi (0)}          & = \varphi (0).
\intertext{Now}
(\varphi e^{-\psi})'  & = \varphi' e^{-\psi} - \psi' \varphi e^{-\psi} = 0 \nonumber
\end{align}
in \man{D}. Hence $\varphi e^{-\psi}$ is constant and thus $\varphi = e^{\psi}$ in \man{D} because of (\ref{eq:exppsi}). Now we put

\[
\lambda (z) = e^{(\frac{1}{n+1} \psi (z))}\; , \; z \in \mathcal{D}.
\] 
So with this choice of $\lambda$ we have shown that 

\begin{equation}
\label{eq:det4}
det \left[ \begin{array}{cccc}
\uparrow     & \uparrow    &        & \uparrow \\
\nu          & e_1         & \ldots & e_n      \\
\downarrow   &  \downarrow &        &\downarrow 
\end{array} \right] =1.
\end{equation}
Since the determinant in (\ref{eq:det4}) is different from zero, we conclude that the vectors $\nu, e_1,\ldots ,e_n$ are linearly independent, and thus we have constructed a frame for $f(z)$ which we call the canonical frame of $f(z)$.\\
Differentiating (\ref{eq:det4}), we get

\begin{align*}
det \left[ \begin{array}{cccc}
\uparrow     & \uparrow    &        & \uparrow \\
\nu'          & e_1         & \ldots & e_n      \\
\downarrow   &  \downarrow &        &\downarrow 
\end{array} \right] & + det \left[ \begin{array}{cccc}
\uparrow     & \uparrow    &        & \uparrow \\
\nu          & e'_1         & \ldots & e_n      \\
\downarrow   &  \downarrow &        &\downarrow 
\end{array} \right] \\
+ \ldots &+ det \left[ \begin{array}{ccccc}
\uparrow     & \uparrow    &        & \uparrow  & \uparrow \\
\nu          & e_1         & \ldots & e_{n-1}   &e'_n      \\
\downarrow   &  \downarrow &        &\downarrow &\downarrow 
\end{array} \right]=0,
\end{align*}
and since in all the terms of the left hand side, except the last one, we have linearly dependent columns ( since $\nu' = e_1$ etc. ) we get

\begin{equation}
\label{eq:det5}
 det \left[ \begin{array}{ccccc}
\uparrow     & \uparrow    &        & \uparrow  & \uparrow \\
\nu          & e_1         & \ldots & e_{n-1}   &e'_n      \\
\downarrow   &  \downarrow &        &\downarrow &\downarrow 
\end{array} \right]=0.
\end{equation}
Now we are ready to define what we call the Schwarzian curvatures of $\Phi$.

\begin{defn}
\label{def:schwarz}
Solving (\ref{eq:det5}), we obtain 
\begin{equation*}
e'_{n}=\kappa_{0}\nu + \kappa_{1} e_{1} +\ldots + \kappa_{n-1} e_{n-1}.
\end{equation*}
We call the $\kappa 's$ the Schwarzian curvatures of $\Phi$.
\end{defn}

We are even able to look at what we have done from another point of view. We can see our tangent vectors as the vectors in the Frenet Frame, and this gives us the Frenet equations
\[
\left\{ \begin{array}{l}
\nu'=e_1,\\ e'_1=e_2, \\ \ldots  \\ e'_{n-1}=e_n \\ e'_n=\kappa_{0}\nu + \kappa_{1} e_{1} +\ldots + \kappa_{n-1} e_{n-1}.
\end{array} \right.
\]
This we can write as
\begin{align*}
\left[ \begin{array}{ccc}
\uparrow     &         & \uparrow\\
\nu          &\ldots   &e_n\\
\downarrow   &         & \downarrow
\end{array} \right]'   &= \left[ \begin{array}{cccc}
\uparrow      &         & \uparrow      &\uparrow\\
e_1           &\ldots   &e_n            &( \kappa_{0}\nu + \ldots + \kappa_{n-1} e_n ) \\
\downarrow    &         & \downarrow    &\downarrow
\end{array} \right] \\
& =\left[ \begin{array}{ccc}
\uparrow     &         & \uparrow\\
\nu          &\ldots   &e_n\\
\downarrow   &         & \downarrow
\end{array} \right] \mbox{K},
\intertext{where}
\mbox{K} &=\left[ \begin{array}{cccccc}
0        & 0      & \ldots & 0      & 0      & \kappa_{0}\\
1        & 0      & \ldots & 0      & 0      & \kappa_{1}\\
0        & 1      & \ldots & 0      & 0      & \kappa_{2}\\
\vdots   & \vdots & \ddots & \vdots & \vdots & \vdots\\
0        & 0      & \ldots & 1      & 0      & \kappa_{n-1}\\
0        & 0      & \ldots & 0      & 1      & 0
\end{array} \right].
\end{align*}

\begin{theorem}
The quantities $\kappa_{0},\ldots ,\kappa_{n-1}$ are invariant under projective transformations in $\mathbb{CP}^{n}$, or equivalently, under affine non--singular transformations of $\mathbb{C}^{n+1}$.
\end{theorem}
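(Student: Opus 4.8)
The plan is to recognize that, by Definition \ref{def:schwarz} together with the relations $e_i = \nu^{(i)}$, the Schwarzian curvatures are precisely the coefficients of the linear ODE
\[
\nu^{(n+1)} = \kappa_{0}\nu + \kappa_{1}\nu' + \cdots + \kappa_{n-1}\nu^{(n-1)}
\]
satisfied by the normalized lifting $\nu$, and that these coefficients are \emph{uniquely} determined: the vectors $\nu, e_1, \ldots, e_n$ are linearly independent by (\ref{eq:det4}), so in particular $\nu, e_1, \ldots, e_{n-1}$ are, and the expression of $e_n'$ in terms of them forced by (\ref{eq:det5}) is unique. Hence to prove invariance it suffices to show that the normalized lifting of the transformed curve satisfies this \emph{same} equation.

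Next I would carry out the main computation. A projective transformation of $\mathbb{CP}^n$ is induced by a constant nonsingular matrix $A \in GL(n+1,\mathbb{C})$ acting linearly on $\mathbb{C}^{n+1}$, and it sends the lifting $f$ to $\tilde f = Af$. Since $A$ is constant, $\tilde f^{(k)} = A f^{(k)}$, so by multilinearity of the determinant (the matrix analogue of (\ref{eq:det3}))
\[
\det\left[ \tilde f, \tilde f', \ldots, \tilde f^{(n)} \right] = \det(A)\,\det\left[ f, f', \ldots, f^{(n)} \right].
\]
Consequently the normalizing factor $\tilde\lambda$ built from $\tilde f$ as in (\ref{eq:lambdan+1}) satisfies $\tilde\lambda^{\,n+1} = \lambda^{n+1}/\det(A)$, so $\tilde\lambda = c\,\lambda$ for a \emph{constant} scalar $c$ with $c^{n+1} = 1/\det(A)$. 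Therefore the normalized transformed lifting is
\[
\tilde\nu = \tilde\lambda\, \tilde f = c\lambda\,Af = cA(\lambda f) = cA\,\nu,
\]
and since $c$ and $A$ are constant, $\tilde\nu^{(k)} = cA\,\nu^{(k)}$ for every $k$. Applying the constant operator $cA$ to the ODE above then yields $\tilde\nu^{(n+1)} = \sum_{i=0}^{n-1}\kappa_i\,\tilde\nu^{(i)}$, i.e. $\tilde\nu$ obeys the same equation with the same coefficients. Because $cA$ is invertible, the transformed frame $\tilde\nu, \tilde\nu', \ldots, \tilde\nu^{(n)}$ is again linearly independent, so its curvatures are unique and must equal the $\kappa_i$.

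The computation itself is short; the points requiring care, which I regard as the main obstacle, are the bookkeeping around the normalization constant and the scope of the transformation group. One must check that passing from $\tilde\lambda^{\,n+1}$ to $\tilde\lambda$ only introduces a \emph{constant} factor $c$ (any choice of $(n+1)$st root works, and different choices differ by a root of unity, leaving the $\kappa_i$ untouched), and one must interpret ``affine non--singular transformation of $\mathbb{C}^{n+1}$'' as a constant linear isomorphism, which is exactly what descends to a projective transformation of $\mathbb{CP}^n$ and what allows $A$ to commute with differentiation. The same argument, applied with $A$ the identity and $c$ arbitrary, simultaneously shows that the $\kappa_i$ are independent of the choice of lifting $f$ within its equivalence class $f \mapsto \mu f$, confirming that they are genuine invariants of $\Phi$.
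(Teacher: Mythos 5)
Your proof is correct and follows essentially the same route as the paper's: both compute that the normalizing factor transforms as $\tilde\lambda = \lambda/\sqrt[n+1]{\det A}$, so the new canonical frame is a constant invertible linear image of the old one and therefore satisfies the identical Frenet/ODE relation with the same coefficients. Your version is in fact slightly more careful than the paper's, since you make explicit the uniqueness of the coefficients (via linear independence of $\nu, e_1,\ldots,e_n$) and the point that the transformation must be read as a constant \emph{linear} isomorphism of $\mathbb{C}^{n+1}$ for it to commute with differentiation and descend to $\mathbb{CP}^n$.
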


\begin{proof}
Let $A :\mathbb{C}^{n+1} \longrightarrow \mathbb{C}^{n+1}$ be an affine nonsingular transformation. Let the transformed $f$ be denoted by $\tilde{f}$, i.e. $\tilde{f}= A\circ f$. Then, just in the same way as earlier, we get

\begin{align*}
1 = det \left[ \begin{array}{ccc}
\uparrow               &         & \uparrow\\
\tilde{\nu}            &\ldots   &\tilde{e}_{n}\\
\downarrow             &         & \downarrow
\end{array} \right]    & = \tilde{\lambda}^{n+1} det \left[ \begin{array}{ccc}
\uparrow               &         & \uparrow\\
(A\circ f)             &\ldots   &(A\circ f)^{(n)}\\
\downarrow             &         & \downarrow
\end{array} \right]\\
&=\tilde{\lambda}^{n+1} \; det \mathcal{J}_{A} \; det \left[ \begin{array}{ccc}
\uparrow        &         & \uparrow\\
f               &\ldots   &f^{(n)}\\
\downarrow      &         & \downarrow
\end{array} \right],
\end{align*}
where $\mathcal{J}_{A}$ is nothing but the the Jacobian matrix of A.\\
Comparing with (\ref{eq:det3}), we see that

\[
\lambda^{n+1} det \left[ \begin{array}{ccc}
\uparrow      &         & \uparrow\\
f             &\ldots   & f^{(n)}\\
\downarrow    &         & \downarrow
\end{array} \right] = \tilde{\lambda}^{n+1} \; det \mathcal{J}_{A} \; det \left[ 
\begin{array}{ccc}
\uparrow        &         & \uparrow\\
f               &\ldots   &f^{(n)}\\
\downarrow      &         & \downarrow
\end{array} \right],
\]
which gives us

\[
\lambda = \tilde{\lambda}\sqrt[n+1]{det \mathcal{J}_{A}} \quad \Longrightarrow \quad 
\tilde{\lambda}= \frac{\lambda}{\sqrt[n+1]{det \mathcal{J}_{A}}}.
\]
The affine transformation $A$ is a composition of a linear part and a constant part, thus $det \mathcal{J}_{A}$ is just the determinant of the linear part of $A$. Hence $det \mathcal{J}_{A} \neq 0$ because $A$ is non--singular. We have 

\[
\tilde{\nu}=\tilde{\lambda} \; (A\circ f) = \frac{\lambda}{\sqrt[n+1]{det \mathcal{J}_{A}}} A\circ f = \frac{A\circ {\lambda f}}{\sqrt[n+1]{det \mathcal{J}_{A}}} = \frac{A\circ \nu}{\sqrt[n+1]{det \mathcal{J}_{A}}},
\]
and

\[
\tilde{e}_{i} =\frac{A\circ {e_i}}{\sqrt[n+1]{det \mathcal{J}_{A}}}.
\]
Therefore

\begin{align*}
\left[ \begin{array}{ccc}
\uparrow             &         & \uparrow\\
\tilde{\nu}          &\ldots   &\tilde{e}_{n}\\
\downarrow           &         & \downarrow
\end{array} \right]' &= \frac{A}{\sqrt[n+1]{det \mathcal{J}_{A}}}
\left[ \begin{array}{ccc}
\uparrow     &         & \uparrow\\
\nu          &\ldots   & e_n\\
\downarrow   &         & \downarrow
\end{array} \right]'\\
&= \frac{A}{\sqrt[n+1]{det \mathcal{J}_{A}}}
\left[ \begin{array}{ccc}
\uparrow     &         & \uparrow\\
\nu          &\ldots   & e_n\\
\downarrow   &         & \downarrow
\end{array} \right]\mbox{K}\\
&=\left[ \begin{array}{ccc}
\uparrow             &         & \uparrow\\
\tilde{\nu}          &\ldots   &\tilde{e}_{n}\\
\downarrow           &         & \downarrow
\end{array} \right] \mbox{K}. \quad \blacksquare
\end{align*}
\end{proof}

In some way one can see the structure of $\Phi$ by looking at the Schwarzian curvatures. If all of the $\kappa$'s of a curve $\Phi$ vanish, then we have $e'_n = 0$. Consequently all components of $\lambda f$ are polynomials of degree less than or equal to $n$, since $e'_n$ is just the $(n+1)$st derivative of $\lambda f$.\\
Furthermore we know that $\Phi$ is in the projective space $\mathbb{CP}^n$ and thus we can give the inhomogeneous coordinates of $\Phi$ by polynomials of degree less than or equal to $n$. $\Phi$ is seen to be an $n$--th degree polynomial fractional map into $\mathbb{CP}^{n}$, i.e.
\[
\Phi = \left( \frac{P_{1}(z)}{P_{0}(z)}, \ldots ,\frac{P_{n}(z)}{P_{0}(z)} \right).
\]
If all the Schwarzian curvatures of $\Phi$ are constants, then $\nu = \lambda f$ satisfies

\begin{equation}
\label{eq:vectoreqn}
\nu^{n+1} = \kappa_{0}\nu + \ldots + \kappa_{n-1}\nu^{(n-1)},
\end{equation}
which is an $(n+1)$st order differential equation with constant coefficients. It has $n+1$ linearly independent solutions $\nu_{0}, \ldots , \nu_{n}$, consisted of exponential functions and polynomials. Each component of $\lambda f$ is a linear combination of the $\nu$'s, and the inhomogeneous coordinates of $\Phi$ consists of these linear combinations. Thus
\[
\Phi =  \left( \frac{L_{1}}{L_{0}}, \ldots ,\frac{L_{n}}{L_{0}} \right),
\]
where the $L$'s are the linear combinations of the $\nu$'s.\\
Of course, these are not the only values that can be obtained by the $\kappa$'s. The $\kappa$'s might as well be functions of $z$. This case we will deal with now.

\begin{theorem}
Let $ k_{0}(z), \ldots , k_{n-1}(z)$ be $n$ analytic functions on the unit disc $\mathcal{D}$. Then there are curves with $\kappa_{0} = k_{0},\ldots ,\kappa_{n-1} = k_{n-1}$.
\end{theorem}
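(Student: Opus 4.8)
The plan is to realize the prescribed functions as the coefficients of a linear ordinary differential equation and to build the curve directly from its solutions. Concretely, I would consider the scalar linear ODE of order $n+1$
\[
w^{(n+1)} = k_{n-1}(z)\,w^{(n-1)} + \cdots + k_1(z)\,w' + k_0(z)\,w,
\]
whose coefficients are exactly the given analytic functions and whose coefficient of $w^{(n)}$ is deliberately zero. Because the coefficients are holomorphic on the disc $\mathcal{D}$, which is simply connected, the classical existence theorem for linear equations guarantees an $(n+1)$--dimensional complex vector space of holomorphic solutions on all of $\mathcal{D}$; I would fix a basis $w_0, \ldots, w_n$ of this space and assemble the vector--valued function $f = (w_0, \ldots, w_n) : \mathcal{D} \longrightarrow \mathbb{C}^{n+1}$. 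Since each component solves the ODE, $f$ satisfies the same vector identity $f^{(n+1)} = k_{n-1} f^{(n-1)} + \cdots + k_0 f$.

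Next I would check that $f$ is an admissible lifting, i.e. that $f, f', \ldots, f^{(n)}$ are linearly independent as vector--valued functions. The determinant $\det[f, f', \ldots, f^{(n)}]$ is precisely the Wronskian of $w_0, \ldots, w_n$, and by Abel's (Liouville's) formula its logarithmic derivative equals the coefficient of $w^{(n)}$ in the equation, which we arranged to be $0$; hence the Wronskian is a constant $W$, and $W \neq 0$ because the $w_j$ are linearly independent solutions. In particular $f(z) \neq 0$ for every $z$ (otherwise the first column would vanish and force $W=0$), so $\Phi = \pi \circ f$ is a well--defined analytic curve in $\mathbb{CP}^n$, nondegenerate because $f, \ldots, f^{(n)}$ span $\mathbb{C}^{n+1}$ at each point.

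It then remains to run the normalization from the start of the section and read off the curvatures. Choosing the constant $\lambda$ with $\lambda^{n+1} W = 1$ produces the canonical frame $\nu = \lambda f$ with $\det[\nu, e_1, \ldots, e_n] = 1$, as in (\ref{eq:det4}); since $\lambda$ is constant and the equation is linear, $\nu$ inherits $\nu^{(n+1)} = k_{n-1}\nu^{(n-1)} + \cdots + k_0 \nu$. Because $\nu, e_1, \ldots, e_n$ form a basis of $\mathbb{C}^{n+1}$, the expansion of $e_n' = \nu^{(n+1)}$ in this basis is unique, and comparing with Definition \ref{def:schwarz} gives $\kappa_i = k_i$ for all $i$; note that the absence of an $e_n = \nu^{(n)}$ term is automatic and matches (\ref{eq:det5}).

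The only genuinely substantive point, and the one I would treat most carefully, is the passage from the solutions of a scalar ODE to a nondegenerate projective curve: one must confirm both that the Wronskian is nonvanishing, so that the $n+1$ derivatives really form a moving frame and the projection to $\mathbb{CP}^n$ makes sense, and that the normalized lifting produced by the section's construction is genuinely this $\nu$. Everything else, namely the existence of the holomorphic solutions and the constancy of the Wronskian, is standard linear ODE theory, and the uniqueness of the curvature coefficients is immediate from the linear independence of the frame.
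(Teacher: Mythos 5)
Your proposal is correct and follows essentially the same route as the paper: both build the lifting $f=(y_0,\ldots,y_n)$ from a basis of solutions of the ODE $y^{(n+1)}=k_0y+\cdots+k_{n-1}y^{(n-1)}$, observe that the Wronskian $\det[f,\ldots,f^{(n)}]$ is constant because the coefficient of $y^{(n)}$ vanishes (you cite Abel's formula, the paper differentiates the determinant directly — the same computation), and then take $\lambda$ constant so that the normalized frame reproduces $\kappa_i=k_i$. Your added remarks on the nonvanishing of the Wronskian and the well--definedness of the projective curve only make explicit what the paper leaves implicit.
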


\begin{proof}
Consider the following differential equation
\begin{equation}
\label{eq:diffeq}
y^{(n+1)} = k_{0}y + \ldots + k_{n-1}y^{n-1}.
\end{equation}
It has $n+1$ linearly independent solutions $y_{0}, \ldots ,y_{n}$.\\
Let the curve $\Phi$ have the following inhomogeneous coordinates
\[
\Phi =  \left( \frac{y_{1}}{y_{0}}, \ldots ,\frac{y_{n}}{y_{0}} \right).
\]
Then we can write the lifting of $\Phi$ as $f=(y_{0}, \ldots ,y_{n})$. This gives us the opportunity to rewrite (\ref{eq:diffeq}) as $ f^{(n+1)}=k_{0}f + \ldots + k_{n-1}f^{(n-1)}$.\\
Now recall the formula by which we defined the Schwarzian curvatures in definition \ref{def:schwarz}, which was obtained from solving (\ref{eq:det5}). Similarly we obtain the following equation:
\begin{align}
\label{eq:detprim}
det \left[ \begin{array}{ccc}
\uparrow        &         & \uparrow\\
f               &\ldots   &f^{(n)}\\
\downarrow      &         & \downarrow
\end{array} \right] '    &= det \left[ \begin{array}{ccccc}
\uparrow         &\uparrow      &           &\uparrow      & \uparrow\\
f                &f'            &\ldots     &f^{(n-1)}     &f^{(n+1)}\\
\downarrow       & \downarrow   &           & \downarrow   & \downarrow
\end{array} \right]=0
\intertext{which implies}
det \left[ \begin{array}{ccc}
\uparrow        &         & \uparrow\\
f               &\ldots   &f^{(n)}\\
\downarrow      &         & \downarrow
\end{array} \right]       &= \c{C} \nonumber
\end{align}
where $\c{C}$ is a constant. Thus in the canonical frame we can choose $\lambda$ to be the $(n+1)$st root of $\c{C}$. Under this choice (\ref{eq:detprim}) is equivalent to $e'_{n}= k_{0} \nu + \ldots + k_{n-1} e_{n-1}$, and we have the desired result.\quad $\blacksquare$
\end{proof}

\begin{remark}
Remark that the solutions to the system must be of the form $f = (y_{0}, \ldots, y_{n})$ where the $y$'s are linearly independent solutions of (\ref{eq:diffeq}). By the $y$'s we intend to give an understanding of how the $\nu$'s would behave.
\end{remark}

\subsection{Formulas for Schwarzian Curvatures}
One goal of these calculations is of course to give the formulas for the $\kappa$'s. For this purpose we start by recalling the vector equation (\ref{eq:vectoreqn}) and that we defined $e_{i}$ as the $i$--th derivative of $\nu$, $\nu^{(i)}$. Then we can write $e'_{n}= \kappa_{0} \nu + \ldots + \kappa_{n-1} e_{n-1}$, which can be written in matrix form.
\[
\left[ \begin{array}{ccc}
\uparrow          &         & \uparrow\\
\nu               &\ldots   &e_{n}\\
\downarrow        &         & \downarrow
\end{array} \right] \left[ 
\begin{array}{c} \kappa_{0} \\ \vdots \\ \kappa_{n-1} \\ 0 \end{array} \right] =
\left[ \begin{array}{c} 
\uparrow \\ 
e'_{n} \\ 
\downarrow  
\end{array} \right]
\]
We can apply Cramer's Rule for giving the expression for the $\kappa$'s. We get
\[
\kappa_{i} = \frac{det \left[ \begin{array}{ccccccc}
\uparrow     &        & \uparrow   & \uparrow   & \uparrow   &        & \uparrow\\
\nu          &\ldots  &e_{i-1}     &e'_{n}      &e_{i+1}     &\ldots  &e_{n}\\
\downarrow   &        & \downarrow & \downarrow & \downarrow &        & \downarrow
\end{array} \right]}{det \left[ \begin{array}{ccc}
\uparrow          &           & \uparrow\\
\nu               &\ldots     &e_{n}\\
\downarrow        &           & \downarrow
\end{array} \right]}.
\]
The determinant in the denominator is different from zero, from which we conclude that the matrix is nonsingular which furthermore is a requirement for the application of Cramer's Rule. In fact the determinant in the denominator is equal to one, hence

\[
\left\{ \begin{array}{l}
\kappa_{0} = det \left[ \begin{array}{cccc}
\uparrow    &\uparrow          &         & \uparrow\\
e'_{n}      &e_{1}             &\ldots   &e_{n}\\
\downarrow  &\downarrow        &         & \downarrow
\end{array} \right]            = (-1)^{n} \; det \left[ \begin{array}{cccc}
\uparrow    &                  & \uparrow          & \uparrow\\
e_{1}       &\ldots            & e_{n}             &e'_{n}\\
\downarrow  &                  & \downarrow        & \downarrow
\end{array} \right], \\
 \\
\kappa_{1} = det \left[ \begin{array}{ccccc}
\uparrow    &\uparrow          &\uparrow   &         & \uparrow\\
\nu         &e'_{n}            &e_{2}      &\ldots   &e_{n}\\
\downarrow  &\downarrow        &\downarrow &         & \downarrow
\end{array} \right]            = (-1)^{n-1} \; det \left[ \begin{array}{ccccc}
\uparrow    &\uparrow       &          & \uparrow          & \uparrow\\
\nu         &e_{2}          &\ldots    & e_{n}             &e'_{n}\\
\downarrow  &\downarrow     &          & \downarrow        & \downarrow
\end{array} \right], \\
 \\
\ldots \\
 \\
\kappa_{n-1} = det \left[ \begin{array}{ccccccc}
\uparrow    &\uparrow          &        &\uparrow           & \uparrow    & \uparrow\\
\nu         &e_{1}             &\ldots  &e_{n-2}            &e'_{n}       &e_{n}\\
\downarrow  &\downarrow        &        & \downarrow        & \downarrow  & \downarrow
\end{array} \right]            = - \; det \left[ \begin{array}{ccccc}
\uparrow    &          & \uparrow   & \uparrow          & \uparrow\\
\nu         &\ldots    & e_{n-2}    & e_{n}             &e'_{n}\\
\downarrow  &          & \downarrow & \downarrow        & \downarrow
\end{array} \right]. \\
\end{array} \right.
\]
Thus up to sign, the Schwarzian curvatures are determinants of $(n+1)\times (n+1)$ submatrices of the $(n+1)\times (n+2)$ matrix

\begin{align*}
\left[ \begin{array}{ccccc}
\uparrow    & \uparrow    &         & \uparrow  & \uparrow\\
\nu         & e_1         & \ldots  & e_n       & e'_n    \\
\downarrow  & \downarrow  &         &\downarrow & \downarrow
\end{array} \right]       &= \left[  \begin{array}{ccccc}
\uparrow    & \uparrow     &         & \uparrow        & \uparrow     \\
\nu         & \nu'         & \ldots  & \nu^{(n)}       & \nu^{(n+1)}  \\
\downarrow  & \downarrow   &         &\downarrow       & \downarrow
\end{array} \right].
\intertext{since $\nu = \lambda f$, we have}
\left[  \begin{array}{ccc}
\uparrow    &         & \uparrow     \\
\nu         & \ldots  & \nu^{(n+1)}  \\
\downarrow  &         & \downarrow
\end{array} \right]   &= \left[  \begin{array}{ccc}
\uparrow    &         & \uparrow   \\
f           & \ldots  & f^{(n+1)}  \\
\downarrow  &         & \downarrow
\end{array} \right] \Lambda ,
\end{align*}
where
\[
\Lambda   = \left[ \begin{array}{lllllll}
\lambda   & \lambda'  & \lambda''  & \lambda'''  & \ldots   
& \binom{n}{0}\lambda^{(n)}        & \binom{n+1}{0}\lambda^{(n+1)} \\
0         & \lambda   & 2\lambda'  & 3\lambda''  & \ldots   
& \binom{n}{1}\lambda^{(n-1)}      & \binom{n+1}{1}\lambda^{(n)} \\
0         & 0         & \lambda    & 3\lambda'   & \ldots   
& \binom{n}{2}\lambda^{(n-2)}      & \binom{n+1}{2}\lambda^{(n-1)} \\
0         & 0         & 0          & \lambda     & \ldots
& \binom{n}{3}\lambda^{(n-3)}      & \binom{n+1}{3}\lambda^{(n-2)} \\
\vdots    & \vdots    & \vdots     & \vdots      & \ddots
& \vdots                           & \vdots \\
0         & 0         & 0          & 0           & \ldots
& \binom{n}{n}\lambda              & \binom{n+1}{n}\lambda' \\
0         & 0         & 0          & 0           & \ldots
& 0                                & \binom{n+1}{n+1}\lambda
\end{array} \right]
\]
is an $(n+2)\times (n+2)$ matrix. The fact that the vector valued functions $ f, \ldots ,f^{(n)}$ form a linearly independent set tells us that the Wronskian is different from zero. Then we know that $ f, \ldots ,f^{(n)} $ are solutions of an $(n+1)$st-order differential equation of the form
\[
h_{n+1}f^{(n+1)}(z)+\ldots +h_{0}f(z)=0.
\]
This shows that there is a choice of functions $ g_0, \ldots ,g_n$ such that
\begin{align}
\label{eq:glambda}
f^{(n+1)}  &= g_0 f + \ldots + g_n f^{(n)}.\nonumber
\intertext{Thus writing $f^{(n+1)}$ in terms of the lower derivatives we have}
\left[  \begin{array}{ccc}
\uparrow    &         & \uparrow     \\
\nu         & \ldots  & \nu^{(n+1)}  \\
\downarrow  &         & \downarrow
\end{array} \right]   &= \left[  \begin{array}{ccc}
\uparrow    &         & \uparrow   \\
f           & \ldots  & f^{(n)}  \\
\downarrow  &         & \downarrow
\end{array} \right] \mbox{G} \;\Lambda
\intertext{where}
\mbox{G}         & = \left[ \begin{array}{ccccc}
1      & 0       & \ldots   & 0        & g_0 \\
0      & 1       & \ldots   & 0        & g_1 \\
\vdots & \vdots  & \ddots   & \vdots   & \vdots\\
0      & 0       & \ldots   & 1        & g_n 
\end{array} \right] \nonumber
\end{align}
is an $(n+1)\times (n+2)$ matrix.The product of G and $\Lambda$ is also an $ (n+1)\times (n+2)$ matrix which we call H.

\begin{equation}
H = G \Lambda = \left[ \begin{array}{lllllll}
\lambda   & \lambda'  & \lambda''  & \lambda'''  & \ldots   
& \binom{n}{0}\lambda^{(n)}        & \binom{n+1}{0}\lambda^{(n+1)}+\lambda g_0 \\
0         & \lambda   & 2\lambda'  & 3\lambda''  & \ldots   
& \binom{n}{1}\lambda^{(n-1)}      & \binom{n+1}{1}\lambda^{(n)}+\lambda g_1 \\
0         & 0         & \lambda    & 3\lambda'   & \ldots   
& \binom{n}{2}\lambda^{(n-2)}      & \binom{n+1}{2}\lambda^{(n-1)}+\lambda g_2 \\
0         & 0         & 0          & \lambda     & \ldots
& \binom{n}{3}\lambda^{(n-3)}      & \binom{n+1}{3}\lambda^{(n-2)} +\lambda g_3\\
\vdots    & \vdots    & \vdots     & \vdots      & \ddots
& \vdots                           & \vdots \\
0         & 0         & 0          & 0           & \ldots
& \binom{n}{n}\lambda              & \binom{n+1}{n}\lambda' +\lambda g_n
\end{array} \right].
\end{equation}
Now we have shown that

\begin{equation*}
\left[ \begin{array}{ccc}
\uparrow    &         & \uparrow     \\
\nu         & \ldots  & \nu^{(n+1)}  \\
\downarrow  &         & \downarrow
\end{array} \right]   = \left[  \begin{array}{ccc}
\uparrow    &         & \uparrow   \\
f           & \ldots  & f^{(n)}  \\
\downarrow  &         & \downarrow
\end{array} \right] \mbox{H}\, ,
\end{equation*}
from which we can calculate the formulas for the $\kappa$'s. All we need to obtain is the signed matrix of the $\nu$'s with the $(j+1)$st column deleted, for $\kappa_j$. For this reason we construct the $(n+1)\times (n+1)$ matrix $\mbox{H}_j$ obtained by deleting the $(j+1)$st column from H, then

\begin{align}
\label{eq:kappaj}
\kappa_j    &= (-1)^{n-j} \, det \, 
\left[ \begin{array}{cccccc}
\uparrow    &         &\uparrow   & \uparrow      &        & \uparrow     \\
\nu         & \ldots  &\nu^{(j)}  & \nu^{(j+2)}   &\ldots  & \nu^{(n+1)}  \\
\downarrow  &         &\downarrow & \downarrow    &        &\downarrow
\end{array} \right]  \nonumber \\
            &= (-1)^{n-j} \, det \, 
\left( \left[  \begin{array}{ccc}
\uparrow    &         & \uparrow   \\
f           & \ldots  & f^{(n)}  \\
\downarrow  &         & \downarrow
\end{array} \right] \mbox{H}_j \right) \nonumber \\
            &= (-1)^{n-j} \, det \, \mbox{H}_j \, det \, 
\left[  \begin{array}{ccc}
\uparrow    &         & \uparrow   \\
f           & \ldots  & f^{(n)}  \\
\downarrow  &         & \downarrow
\end{array} \right].
\end{align}

\subsection{The $\kappa$'s for Curves in Low Dimensions}
\subsubsection{ $n = 1$ }
Curves in the one--dimensional projective space $\mathbb{CP}$, and liftings to the two--dimensional complex space $\mathbb{C}^{2} \setminus \{ 0 \}$ can be expressed by $\Phi = x(z)$, and $f=(1,x)$. Then
\begin{equation}
\label{eq:ff'f''}
\left\{ \begin{array}{l}
f= (1,x),\\
 \\
f'=(0,x'),\\
 \\
f''=(0,x'')=\left( \frac{x''}{x'} \right) (0,x') = \left( \frac{x''}{x'} \right) f'. \\
\end{array} \right.
\end{equation}
Using these in the equation (\ref{eq:lambdan+1}), we get

\begin{align*}
\lambda ^{2} \; det \left[ \begin{array}{cc}
1  & 0 \\
x  & x' \\
\end{array} \right] = 1 \quad & \Longrightarrow  \quad \left\{ \begin{array}{l}
\lambda = (x')^{-1/2} \\
 \\
\lambda' = \frac{-1}{2}{(x')}^{-3/2}x'' \\
 \\
\lambda'' = \frac{3}{4} {(x')}^{-5/2}{x''}^{2} - \frac{1}{2}{(x')}^{-3/2}x'''  \\
\end{array} \right. 
\intertext{and using the fact that we can write $f^{(n+1)}  = g_0 f + \ldots + g_n f^{(n)}$, we see from the equations (\ref{eq:ff'f''}) that $g_0$ must be equal to zero and $g_1$ equal to $(x''/x')$ which furthermore is equal to $ (-2 \lambda' / \lambda)$, since $f'' = 0 f + (x''/x') f'$. Then from (\ref{eq:glambda}) we get}
\left[ \begin{array}{ccc}
\uparrow   & \uparrow   & \uparrow   \\
\nu        & \nu'       & \nu''      \\
\downarrow & \downarrow & \downarrow 
\end{array} \right]     
&= \left[ \begin{array}{cc}
1  & 0 \\
x  & x' 
\end{array} \right] 
\left[ \begin{array}{ccc}
1  & 0  & g_0  \\
0  & 1  & g_1
\end{array} \right] 
\left[ \begin{array}{ccc}
\lambda   & \lambda'   & \lambda''  \\
0         & \lambda    & 2\lambda'  \\
0         & 0          & \lambda
\end{array} \right],
\end{align*}
from which we obtain H and $\mbox{H}_{0}$.
\[
\mbox{H}   = \left[ \begin{array}{ccc}
\lambda    & \lambda'  & (\lambda'' + \lambda g_0)  \\
0          & \lambda   & ( 2 \lambda' + \lambda g_1 )
\end{array} \right]
\]
\[
\mbox{H}_0 = \left[ \begin{array}{cc}
\lambda'   & (\lambda'' + \lambda g_0) \\
\lambda    & (2\lambda' + \lambda g_1)
\end{array} \right]
\]
Then from (\ref{eq:kappaj}) we get
\begin{align}
\label{eq:kappazero} 
\kappa_{0} & = (-1)^{1}\, det \, \mbox{H}_0 \, det \left[ \begin{array}{cc}
1  &  0 \\
x  &  x' 
\end{array} \right]  \nonumber \\
 \nonumber \\
& \quad = -( 2{\lambda'}^2 + \lambda \lambda' g_1 -\lambda \lambda'' + \lambda^{2} g_0)(x') = -x' \left( 2{\lambda'}^{2} + \lambda \lambda' \frac{-2\lambda'}{\lambda} -\lambda \lambda'' \right)  \nonumber \\ 
 \nonumber  \\
& \quad \quad  = -x' \lambda \lambda'' \, = \, \frac{\lambda''}{\lambda}= \left(\frac{3}{4}{(x')}^{-5/2}{x''}^{2} - \frac{1}{2}{(x')}^{-3/2}x''' \right) x^{1/2}  \nonumber  \\
 \nonumber  \\
& \quad \quad \quad = \frac{3}{4} \left( \frac{x''}{x'} \right)^{2} - \frac{1}{2}\left( \frac{x'''}{x'} \right) \nonumber \\
 \nonumber \\
& \quad \quad \quad \quad = - \frac{1}{2}  S  x(z).
\end{align}
Thus we have shown that the Schwarzian curvature of a curve in the one--dimensional projective space is a constant multiple of the Schwarzian derivative.

\begin{example}
We have calculated the Schwarzian derivative for the one--dimensional complex function $e^{z}$ in example \ref{ex:ez}, we obtained
\[ S ( e^z ) = \frac{-1}{2}. \]
Then the Schwarzian curvature of $e^z$ is nothing but the Schwarzian derivative multiplied by $-1/2$. Thus the Schwarzian curvature of the function $e^{z}$ is constant. $\kappa_0 = 1/4$. 
\end{example}

\subsubsection{ $ n=2 $ }
When $n$ is equal to two, we have $ \kappa_0 $ and $ \kappa_1 $ to compute. In this case we have a curve in the two--dimensional projective space $\mathbb{CP}^{2}$, and liftings to the three--dimensional complex space $\mathbb{C}^3 \setminus \{ 0 \}$. $\Phi = (x(z),y(z))$, and $f=(1,x(z),y(z))$. Then $f'=(0,x'(z),y'(z))$, $f''=(0,x''(z),y''(z))$, $f'''=(0,x'''(z),y'''(z))$. H is then the $(3 \times 4 )$ matrix
\begin{align*}
\mbox{H}  &= \left[ \begin{array}{llll}
\lambda   & \lambda'  & \lambda''  & \lambda''' +\lambda g_0 \\
0         & \lambda   & 2\lambda'  & 3\lambda'' +\lambda g_1 \\
0         & 0         & \lambda    & 3\lambda'  +\lambda g_2
\end{array} \right].
\intertext{The $g$'s can easily be calculated by Gaussian elimination. From}
\left[ \begin{array}{c}
0    \\
x''' \\
y''' 
\end{array} \right] &= g_0 \left[ \begin{array}{c}
1    \\
x'' \\
y'' 
\end{array} \right] + g_1 \left[ \begin{array}{c}
0    \\
x' \\
y' 
\end{array} \right] + g_2 \left[ \begin{array}{c}
0    \\
x'' \\
y''' 
\end{array} \right]
\end{align*}
we directly see that $g_0$ must be equal to zero, and

\begin{align*}
\left[ \begin{array}{cc}
x' & x''\\
y' & y'' \end{array}
\left| \begin{array}{c}
x''' \\
y''' \end{array} \right. \right] 
& \Longrightarrow  \left[ \begin{array}{cc}
1  & \frac{x''}{x'}\\
0  & y'' -\frac{y'x''}{x'} \end{array}
\left| \begin{array}{c}
\frac{x'''}{x'} \\
y''' - \frac{y'x''}{x'} \end{array} \right. \right]  \quad  \Longrightarrow \\
 \\
\left[ \begin{array}{cc}
1  & \frac{x''}{x'}\\
 \\
0  & 1 \end{array}
\left| \begin{array}{c}
\frac{x'''}{x'} \\
 \\
\frac{y''' x' - y' x'''}{x'}\frac{x'}{y'' x' - y' x''} \end{array} \right. \right] 
& \Longrightarrow
\left[ \begin{array}{cc}
1  & 0\\
 \\
0  & 1 \end{array}
\left| \begin{array}{c}
\frac{x''' y'' - x'' y'''}{x' y''- x'' y' } \\
 \\
\frac{y''' x' - y' x'''}{y'' x' - y' x''} \end{array} \right. \right] 
\end{align*}
gives the expressions for the other $g$'s.

\begin{align*}
g_1 &= \frac{x''' y'' - x'' y'''}{x' y''- x'' y' } \\
 \\
g_2 &=\frac{x' y''' - x''' y'}{x' y'' - x'' y' } \\
\intertext{The $\lambda$'s we calculate using equation (\ref{eq:lambdan+1}) except that for simplicity we use the notation $\sigma_{ij}$ for the expression $x^{(i)}y^{(j)}-x^{(j)}y^{(i)}$.}
 &\left\{ \begin{array}{l}
\lambda = \frac{1}{(x'y''-x''y')^{1/3}} = \frac{1}{(\sigma_{12} )^{1/3}} \\
 \\
\lambda' = -\frac{1}{3}\frac{\sigma_{13} }{ (\sigma_{12})^{4/3} } \\
 \\
\lambda'' = \frac{4}{9}\frac{(\sigma_{13})^{2} }{ (\sigma_{12})^{7/3} } - \frac{1}{3}\frac{\sigma_{23}+\sigma_{14} }{ (\sigma_{12})^{4/3} } \\ 
 \\
\lambda''' = \frac{4}{3}\frac{\sigma_{13} (\sigma_{23} + \sigma_{14})}{ (\sigma_{12})^{7/3} } - \frac{1}{3}\frac{\sigma_{15} + 2 \sigma_{24}}{(\sigma_{12})^{4/3} }- \frac{28}{27}\frac{(\sigma_{13})^{3}}{(\sigma_{12})^{10/3}}
\end{array} \right.
\end{align*}
One should also bare in mind that these formulas give arise to other useful expressions as long as we allow ``manipulating'' them, for example $g_1 = (-\sigma_{23}/ \sigma_{12})$, $g_2 = (\sigma_{13}/ \sigma_{12})$ or $g_2 = (-3\lambda' / \lambda)$  etc. \\

The $\kappa$'s we can calculate from (\ref{eq:kappaj}).
\begin{align*}
\kappa_1  &= (-1)^{1} \, det \left[ \begin{array}{ccc}
1   & x   & y \\
0   & x'  & y' \\
0   & x'' & y'' \\
\end{array} \right] \, det \left[ \begin{array}{ccc}
\lambda   & \lambda''   & \lambda''' +\lambda g_0 \\
0         & 2\lambda'   & 3\lambda'' +\lambda g_1 \\
0         & \lambda     & 3\lambda'  +\lambda g_2
\end{array} \right]  \\
 \\
 &= -(x'y''-x''y')( 6 \lambda {\lambda'}^{2} + 2 \lambda^{2} \lambda' g_2 - 3 \lambda^{2} \lambda'' - \lambda^{3} g_1 ),
\intertext{since $(x'y''-x''y') = \lambda^{-3}$ and $g_2 = (-3\lambda' / \lambda)$, we get}
\kappa_{1} &= - \frac{6 {\lambda'}^{2}}{\lambda^{2}} - \frac{2 \lambda' g_2}{\lambda} +\frac{3\lambda''}{\lambda} + g_1 = \frac{3 \lambda''}{\lambda} + g_1 \\
\intertext{And}
\kappa_0  &= (-1)^{0} \, det \left[ \begin{array}{ccc}
1   & x   & y \\
0   & x'  & y' \\
0   & x'' & y'' \\
\end{array} \right] \, det \left[ \begin{array}{ccc}
\lambda'   & \lambda''   & \lambda''' +\lambda g_0 \\
\lambda    & 2\lambda'   & 3\lambda'' +\lambda g_1 \\
0          & \lambda     & 3\lambda'  +\lambda g_2
\end{array} \right]  \\
 \\
 &= (x'y''-x''y')( 6{\lambda'}^{3} + 2 {\lambda'}^{2}\lambda g_2 +\lambda^{2}\lambda''' + \lambda^{3} g_0 - 6\lambda \lambda' \lambda'' - \lambda^{2} \lambda' g_1 - \lambda^{2}\lambda'' g_2 ) \\
 \\
 &=(x'y''-x''y')( 6{\lambda'}^{3} + 2 {\lambda'}^{2}\lambda g_2 - 3\lambda \lambda' \lambda'' - \lambda^{2}\lambda' g_1 ) \\
 \\
 &+ \lambda^{-3}( \lambda^{2}\lambda''' + \lambda^{3} g_0 - 3\lambda \lambda' \lambda'' - \lambda^{2}\lambda'' g_2 )
\intertext{Here we see that the elements in the first part are the same as $\kappa_1$ multiplied with $-\lambda' / \lambda$. Thus}
\kappa_0 &= -\frac{\lambda'}{\lambda}\kappa_1 + \frac{\lambda'''}{\lambda}+\lambda^{-3} ( -3\lambda \lambda' \lambda'' - \lambda \lambda'' g_2 ) = -\frac{\lambda'}{\lambda} \kappa_1 + \frac{\lambda'''}{\lambda}
\end{align*}
Using the $\sigma$'s we obtain the final expressions for the $\kappa$'s in terms of the derivatives of $x(z)$ and $y(z)$. First we express the fractions $\lambda' / \lambda$ and ...

\begin{align}
& \left\{ \begin{array}{l}
\frac{\lambda'}{\lambda} = \frac{-\sigma_{13}}{3\sigma_{12}} \\
 \\
\frac{\lambda''}{\lambda} = \frac{4}{9} \left( \frac{\sigma_{13}}{\sigma_{12}} \right)^{2} - \frac{1}{3} \frac{\sigma_{23} +\sigma_{14}}{\sigma_{12}} \\
 \\
\frac{\lambda'''}{\lambda} = \frac{4}{3}\frac{\sigma_{13} (\sigma_{23} + \sigma_{14})}{(\sigma_{12})^{2}} - \frac{1}{3} \frac{\sigma_{15} + 2 \sigma_{24}}{\sigma_{12}} - \frac{28}{27} \left( \frac{\sigma_{13}}{\sigma_{12}} \right)^{3} 
\end{array} \right. \nonumber \\ 
 \nonumber \\
 \nonumber \\
\label{eq:kappa0}
\kappa_0 &= \frac{4}{9} \left( \frac{\sigma_{13}}{\sigma_{12}} \right)^{3} - \frac{ \sigma_{13}\sigma_{14}}{3(\sigma_{12})^{2}} - \frac{2}{3} \frac{\sigma_{13}\sigma_{23}}{(\sigma_{12})^{2}} + \frac{4}{3}\frac{\sigma_{13} \sigma_{23}}{(\sigma_{12})^{2}} \nonumber \\
 \nonumber \\
         &+ \frac{4}{3}\frac{\sigma_{13} \sigma_{14}}{(\sigma_{12})^{2}} - \frac{1}{3}\frac{\sigma_{15} + 2\sigma_{24}}{\sigma_{12}} - \frac{28}{27}\left( \frac{\sigma_{13}}{\sigma_{12}} \right)^{3} \nonumber \\
 \nonumber \\
         &= -\frac{16}{27} \left( \frac{\sigma_{13}}{\sigma_{12}} \right)^{3} + \frac{1}{3}\frac{\sigma_{13} ( 3\sigma_{14} + 2 \sigma_{23} )}{(\sigma_{12})^{2}} - \frac{1}{3}\frac{\sigma_{15} + 2\sigma_{24}}{\sigma_{12}}. \\
 \nonumber \\
 \nonumber \\
\label{eq:kappa1}
\kappa_1 &= \frac{4}{3} \left( \frac{\sigma_{13}}{\sigma_{12}} \right)^{2} - \frac{\sigma_{23} +\sigma_{14}}{\sigma_{12}} + \frac{\sigma_{23}}{\sigma_{12}} \nonumber \\
 \nonumber \\
         &= \frac{4}{3} \left( \frac{\sigma_{13}}{\sigma_{12}} \right)^{2} - \frac{\sigma_{14} + 2\sigma_{23}}{\sigma_{12}} .
\end{align}
From the equations (\ref{eq:kappa0}) and (\ref{eq:kappa1}) above we also see that one important condition for the existence of $\kappa_0$ and $\kappa_1$, is that $\sigma_{12}$ must be different from zero.

\begin{example}
As an example for a curve in the two--dimensional projective space, and its lifting to $\mathbb{C}^{3} \setminus \{ 0 \}$, we choose $\Phi = (\cos z , \sin z )$. Which when lifted to the three--dimensional complex space, seems to be the complex circle $f = (1, \cos z , \sin z )$. Then we calculate

\[
\begin{array}{lllll}
\left\{ \begin{array}{l}
x(z)=\cos z  \\
x'(z)=-\sin z  \\
x''(z)=-\cos z   \\
x'''(z)= \sin z    \\
x^{(4)}(z)= \cos z   \\
x^{(5)}(z) =-\sin z    
\end{array} \right. & & \left\{ \begin{array}{l}
y(z)=\sin z   \\
y'(z)=\cos z    \\
y''(z)=-\sin z    \\
y'''(z)= -\cos z    \\
y^{(4)}(z)=  \sin z   \\
y^{(5)}(z) =  \cos z    
\end{array} \right. & & \left\{ \begin{array}{l}
\sigma_{12} = 1  \\
\sigma_{13} = 0  \\
\sigma_{14} = -1 \\
\sigma_{15} = 0  \\
\sigma_{23} = 1  \\
\sigma_{24} = 0. 
\end{array} \right. 
\end{array}
\]
Hence
\begin{align*}
\kappa_{0} &= 0,\\
\kappa_{1} &= - 1 .
\end{align*}
\end{example}

\newpage
\subsubsection{ $n = 3$ }
For $n=3$, we have $\kappa_{i} \, , \, i=0,1,2$ to compute. Looking back to the cases $n=1$ and $n=2$, we see that the formulas for the $\kappa$'s differ a lot. Naturally the Schwarzian curvatures for curves in the three--dimensional complex projective space will require more calculations then those for curves in $\mathbb{CP}$ and $\mathbb{CP}^{2}$. This is a good reason to let a computer take care of the calculations. For this purpose we use Mathematica, which is good at symbolic calculations. In fact, the formulas will be so big that even writing them will be difficult.\\

We start with expressing curves in $\mathbb{CP}^{3}$ and their liftings to the four--dimensional complex space by $\Phi =(w(z),x(z),y(z))$ , $f=(1,w,x,y)$. Then $ f' = (0,w',x',y')$ , $f''=(0,w'',x'',y'')$ , $f'''=(0,w''',x''',y''')$ and $f^{(4)}=(0,w^{(4)},x^{(4)},y^{(4)})$. The $g$'s are obtained from
\begin{equation}
\left[ \begin{array}{c}
0 \\ w^{(4)} \\ x^{(4)} \\ y^{(4)}
\end{array} \right]
= g_{0}\left[ \begin{array}{c}
1 \\ w \\ x \\ y
\end{array} \right]
+ g_{1}\left[ \begin{array}{c}
0 \\ w' \\ x' \\ y'
\end{array} \right]
+ g_{2}\left[ \begin{array}{c}
0 \\ w'' \\ x'' \\ y''
\end{array} \right]
+ g_{3} \left[ \begin{array}{c}
0 \\ w''' \\ x''' \\ y'''
\end{array} \right]
\end{equation}
We let Mathematica do the Gaussian elimination, except that when writing down, we use the notation $\sigma_{ijk}= w^{(i)} x^{(j)}  y^{(k)} - w^{(i)} x^{(k)} y^{(j)}$. Then

\begin{equation}
\label{eq:tredimg}
\left\{ \begin{array}{l}
g_{0}= 0, \\
 \\
g_{1}=\frac{\sigma_{423}+\sigma_{342}+\sigma_{234}}{\sigma_{123}+\sigma_{231}+\sigma_{312}}, \\
 \\
g_{2}=-\frac{\sigma_{413}+\sigma_{341}+\sigma_{134}}{\sigma_{123}+\sigma_{231}+\sigma_{312}}, \\
 \\
g_{3}=\frac{\sigma_{412}+\sigma_{241}+\sigma_{124}}{\sigma_{123}+\sigma_{231}+\sigma_{312}}. 
\end{array} \right.
\end{equation}
We also know that H is the $ 4 \times 5 $ matrix
\[
\mbox{H}  = \left[ \begin{array}{lllll}
\lambda   & \lambda'  & \lambda''  & \lambda'''  & \lambda^{(4)} +\lambda g_0 \\
0         & \lambda   & 2\lambda'  & 3\lambda''  & 4\lambda'''+\lambda g_1 \\
0         & 0         & \lambda    & 3\lambda'   & 6\lambda'' +\lambda g_2 \\
0         & 0         & 0          & \lambda     & 4\lambda' +\lambda g_3
\end{array} \right],
\]
which gives us $\mbox{H}_{j} \, , \, j=0,1,2$ by deleting the $(i+1)$st column of H.
\[
\begin{array}{l}
\mbox{H}_{0} = \left[ \begin{array}{llll}
 \lambda'  & \lambda''  & \lambda'''  & \lambda^{(4)} +\lambda g_0 \\
 \lambda   & 2\lambda'  & 3\lambda''  & 4\lambda'''+\lambda g_1 \\
 0         & \lambda    & 3\lambda'   & 6\lambda'' +\lambda g_2 \\
 0         & 0          & \lambda     & 4\lambda' +\lambda g_3
\end{array} \right], \\
 \\
\mbox{H}_{1} = \left[ \begin{array}{llll}
\lambda   & \lambda''  & \lambda'''  & \lambda^{(4)} +\lambda g_0 \\
0         & 2\lambda'  & 3\lambda''  & 4\lambda'''+\lambda g_1 \\
0         & \lambda    & 3\lambda'   & 6\lambda'' +\lambda g_2 \\
0         & 0          & \lambda     & 4\lambda' +\lambda g_3
\end{array} \right], \\
 \\
\mbox{H}_{2} = \left[ \begin{array}{lllll}
\lambda   & \lambda'  & \lambda'''  & \lambda^{(4)} +\lambda g_0 \\
0         & \lambda   & 3\lambda''  & 4\lambda'''+\lambda g_1 \\
0         & 0         & 3\lambda'   & 6\lambda'' +\lambda g_2 \\
0         & 0         & \lambda     & 4\lambda' +\lambda g_3
\end{array} \right].
\end{array} 
\]
The $\lambda$'s we obtain by applying equation (\ref{eq:lambdan+1}).
\begin{align}
\label{eq:tredimlambdawxy}
1  & = \lambda^{(4)} \, det \, \left[ \begin{array}{cccc}
1  & 0   & 0    & 0       \\
w  & w'  & w''  & w'''    \\
x  & x'  & x''  & x'''    \\
y  & y'  & y''  & y'''  
\end{array} \right] \quad  \quad  \Longrightarrow \nonumber \\
 \nonumber \\
 \nonumber \\
\lambda  &= \frac{1}{(w'x''y'''-w'x'''y''+w''x'''y'-w''x'y'''+w'''x'y''-w'''x''y')^{1/4}}. \\
 \nonumber 
\end{align}
Unfortunately, not even Mathematica could derive (\ref{eq:tredimlambdawxy}) four times, so we will not be able to give the complete formulas for the $\kappa$'s. At least it wouldn't give us a better understanding about the behaviour of the $\kappa$'s even if we could do that. Instead we use the notation, that we introduced for the $g$'s in (\ref{eq:tredimg}), for giving the formulas for the $\lambda$'s. It is also useful to use some properties of the $\sigma$'s, for example
\[
\left\{ \begin{array}{l}
\sigma_{ijk}=0  \mbox{    for    }  j=k ,\\
 \\
\sigma_{ijk}+\sigma_{ikj}=0.
\end{array} \right.
\]
Using these and some other substitutions, we get the following expressions:

\begin{align}
\label{eq:tredimlambda}
\lambda     & = \frac{1}{(\sigma_{123}+\sigma_{231}+\sigma_{312})^{1/4}} \\
 \nonumber \\
\label{eq:tredimlambda'}
\lambda'    & = -\frac{(\sigma_{124}+\sigma_{241}+\sigma_{412})}{4(\sigma_{123}+\sigma_{231}+\sigma_{312})^{5/4}} = -\frac{\lambda g_3 }{4} \\
 \nonumber \\
\label{eq:tredimlambda''}
\lambda''   & = \frac{15}{16} \left( \frac{\sigma_{124}+\sigma_{241}+\sigma_{412}}{\sigma_{123}+\sigma_{231}+\sigma_{312}} \right)^{2} -\frac{\sigma_{134}+\sigma_{341}+\sigma_{413}}{4 (\sigma_{123}+\sigma_{231}+\sigma_{312})^{5/4}} \nonumber \\
 \nonumber \\
 & -\frac{\sigma_{125}+\sigma_{251}+\sigma_{512}}{4 (\sigma_{123}+\sigma_{231}+\sigma_{312})^{5/4}} \; = \frac{\lambda g_2}{4} - \frac{5 {g_3}^{2}}{6} + \frac{\lambda}{4} \left( \frac{\sigma_{125}+\sigma_{251}+\sigma_{512}}{\sigma_{123}+\sigma_{231}+\sigma_{312}} \right) \\
 \nonumber \\
\label{eq:tredimlambda'''}
\lambda'''  & = \frac{ 15 g_2 g_3 }{16} - \frac{\lambda g_1}{4} - \frac{ 10 {g_3}^{3}}{16}-\frac{\lambda}{2} \left( \frac{ \sigma_{135} + \sigma_{351} + \sigma_{513} }{\sigma_{123} + \sigma_{231} + \sigma_{312} } \right) \nonumber \\
 \nonumber \\
 & +\frac{15 g_{3}}{16} \left( \frac{\sigma_{125} + \sigma_{251} + \sigma_{512}}{ \sigma_{123} + \sigma_{231} + \sigma_{312} } \right) - \frac{\lambda}{4} \left( \frac{\sigma_{126} + \sigma_{261} + \sigma_{612}}{ \sigma_{123} + \sigma_{231} + \sigma_{312} } \right) \\
\nonumber \\
\label{eq:tredimlambda''''}
 \lambda^{(4)} & = \; \ldots \\
 \nonumber
\end{align}
As we see in the formulas (\ref{eq:tredimlambda})--(\ref{eq:tredimlambda''''}), inserting the precise formulas in those of the $\kappa$'s is not very useful. For that reason, we just write the formulas for the $\kappa$'s in terms of $\lambda$, its derivatives and the $g$'s. Equation (\ref{eq:kappaj}) gives

\begin{align}
\label{eq:tredimk0}
\kappa_{0}      &= - det \, \mbox{H}_{0} \, det \, \left[ \begin{array}{cccc}
1  & 0   & 0    & 0       \\
w  & w'  & w''  & w'''    \\
x  & x'  & x''  & x'''    \\
y  & y'  & y''  & y'''  
\end{array} \right] \nonumber \\
  \nonumber \\
& \quad = - (\sigma_{123}+\sigma_{231}+\sigma_{312})(\lambda^{2}{\lambda'}^{2}g_{1}+6\lambda {\lambda'}^{3}g_{3} - 12{\lambda'}^{3}\lambda'' + 2\lambda^{2}{\lambda'}^{2}g_{2} \nonumber\\
  \nonumber\\
& \quad \quad - 3\lambda^{2} \lambda' \lambda'' g_{3}- 24\lambda {\lambda'}^{2} \lambda'' -3\lambda {\lambda'}^{2} \lambda'' g_{3}+24{\lambda'}^{4} + \lambda^{3}\lambda'' g_{2} \nonumber \\
 \nonumber \\
& \quad \quad \quad + 6\lambda^{2} {\lambda''}^{2} +\lambda^{2} \lambda' \lambda''' g_{3} + 8\lambda {\lambda'}^{2} \lambda''' - \lambda^{2} \lambda' \lambda^{(4)}). \\
 \nonumber \\
 \nonumber \\
\label{eq:tredimk1}
\kappa_{1}      &= det \, \mbox{H}_{1} \, det \, \left[ \begin{array}{cccc}
1  & 0   & 0    & 0       \\
w  & w'  & w''  & w'''    \\
x  & x'  & x''  & x'''    \\
y  & y'  & y''  & y'''  
\end{array} \right] \nonumber \\
  \nonumber \\
& \quad = (\sigma_{123}+\sigma_{231}+\sigma_{312})(\lambda^{4} g_{1} - 2\lambda^{3} \lambda' g_{2} + 6\lambda^{2} {\lambda'}^{2}g_{3} \nonumber \\
 \nonumber \\
& \quad \quad + 24 \lambda {\lambda'}^{3} - 3\lambda^{3} {\lambda''} g_{3} - 24 \lambda^{2} \lambda' \lambda'' + 4\lambda^{3} {\lambda'''}). \\
\nonumber \\
\nonumber \\
\label{eq:tredimk2}
\kappa_{2}      &= - det \, \mbox{H}_{2} \, det \, \left[ \begin{array}{cccc}
1  & 0   & 0    & 0       \\
w  & w'  & w''  & w'''    \\
x  & x'  & x''  & x'''    \\
y  & y'  & y''  & y'''  
\end{array} \right] \nonumber \\
 \nonumber \\
& \quad = - (\sigma_{123}+\sigma_{231}+\sigma_{312})(3\lambda^{3} \lambda' g_{3} + 12 \lambda^{2} {\lambda'}^{2} - 6\lambda^{3} \lambda'' - \lambda^{4} g_{2}). \\
 \nonumber 
\end{align}
Perhaps these expressions are slightly confusing because of the lack of knowledge about their direct dependence of $w(z)$, $x(z)$ and $y(z)$. One easy way to show these relations is to calculate the $\kappa$'s for a known curve. We construct an example.

\begin{example}
In this example, we try to choose the curve $\Phi$ such that the $\kappa$'s will be relatively easy to compute. We choose $\Phi = (z^{2} / 2 , \cos z, \sin z)$. Then the lifting to the four--dimensional complex space $\mathbb{C}^{4} \setminus \{ 0 \}$ is $f(z)=(1,z^{2} / 2 , \cos z, \sin z)$. Now we have
\[
\begin{array}{lllll}
\left\{ \begin{array}{l}
w(z) = \frac{z^{2}}{2}  \\
w'(z) = z  \\
w''(z) = 1 \\
w'''(z) = 0  \\
w^{(4)}(z) = 0  \\
\vdots
\end{array} \right. & & \left\{ \begin{array}{l}
x(z)=\cos z  \\
x'(z)=-\sin z  \\
x''(z)=-\cos z   \\
x'''(z)= \sin z    \\
x^{(4)}(z)= \cos z   \\
\vdots    
\end{array} \right. & &  \left\{ \begin{array}{l}
y(z)=\sin z   \\
y'(z)=\cos z    \\
y''(z)=-\sin z    \\
y'''(z)= -\cos z    \\
y^{(4)}(z)=  \sin z   \\
\vdots 
\end{array} \right.
\end{array}
\]
We calculate the various $\sigma$'s. Then we calculate the derivatives of $\lambda$ from the obtained $\lambda$ and directly from (\ref{eq:tredimg}) we calculate the $g$'s.
\begin{align*}
& \left\{ \begin{array}{l}
\sigma_{123}=w' x'' y''' - w' x''' y'' = z({\cos }^{2} z+ {\sin }^{2} z) = z   \\
\sigma_{134}= z \\
\sigma_{241}=\sigma_{234}= 1 \\
\sigma_{231}=\sigma_{312}= \sigma_{341}=\sigma_{413}= \sigma_{124}=\sigma_{412}= \sigma_{342}=\sigma_{423}= 0 \\
\end{array} \right.  \\
 \\
& \left\{ \begin{array}{l}
\lambda = z^{-1/4} \\
 \\
\lambda' = -\frac{1}{4} z^{-5/4} \\
 \\
\lambda'' = \frac{5}{16} z^{-9/4} \\
 \\
\lambda''' = -\frac{45}{64} z^{-13/4} \\
 \\
\lambda^{(4)} = \frac{585}{256} z^{-17/4} 
\end{array} \right. \\
 \\
& \left\{ \begin{array}{l}
g_{0} = 0 \\
 \\
g_{2} = -1 \\
 \\
g_{1} = g_{3} = \frac{1}{z}
\end{array} \right.
\end{align*}
These can be directly inserted into the given formulas for the $\kappa$'s, (\ref{eq:tredimk0})--(\ref{eq:tredimk2}). We get:

\begin{align*}
\kappa_{0} &= z\left( -\frac{1}{16z^{4}} - \frac{2}{16z^{3}} + \frac{6}{64z^{5}} - \frac{24}{256z^{5}} + \frac{5}{16z^{3}} - \frac{45}{256z^{5}} + \frac{120}{256z^{5}} \right) \\
 \\
           &+ z\left( \frac{15}{256z^{6}} - \frac{60}{1024z^{6}} - \frac{150}{256z^{5}} - \frac{45}{256z^{6}} + \frac{360}{1024z^{6}} - \frac{585}{1024z^{6}} \right) \\
 \\
           &= \frac{7}{(4z)^{2}} - \frac{4}{(4z)^{3}} - \frac{75}{(4z)^{4}} - \frac{285}{(4z)^{5}}, \\
 \\
\kappa_{1} &= z \left( \frac{1}{z^{2}} - \frac{2}{4z^{2}} + \frac{6}{16z^{4}} - \frac{24}{64z^{4}} - \frac{15}{16z^{4}} + \frac{120}{64z^{4}} - \frac{180}{64z^{4}} \right) \\
 \\
           &= \frac{1}{2z} - \frac{15}{(2z)^{3}}, \\
 \\
\kappa_{2} &= z \left( -\frac{1}{z} + \frac{3}{4z^{3}} - \frac{12}{16z^{3}} + \frac{30}{16z^{3}} \right) \\
 \\
           &= - 1 + \frac{15}{8z^{2}}.
\end{align*}
\end{example}

\subsection{Transformation Formulas}
The Schwarzian curvatures are not invariant under change of coordinates, instead they obey a set of transformation rules. In this part we try to give the formulas for the transformed $\kappa$'s in the lower dimension cases. We let $z=z(w)$ be a change of coordinates in the disc $\mathcal{D}$.  

\begin{theorem}
The transformation formula for $\kappa_0$ in the one--dimensional projective space, $\mathbb{CP}$, is the following multiple of the Schwarzian derivative:
\begin{equation}
\label{eq:kappa0trans}
\tilde{\kappa}_0 = {z'}^{2}\kappa_0 - \frac{1}{2} S z.
\end{equation}
\end{theorem}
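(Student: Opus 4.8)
The plan is to reduce everything to the cocycle (chain-rule) identity for the Schwarzian derivative. Recall from (\ref{eq:kappazero}) that in the one--dimensional case the Schwarzian curvature is simply
\[
\kappa_0 = \frac{3}{4}\left(\frac{x''}{x'}\right)^2 - \frac{1}{2}\frac{x'''}{x'} = -\frac{1}{2} S x,
\]
where all derivatives are taken with respect to $z$. A change of coordinates $z = z(w)$ re-parameterises the curve $\Phi = x(z)$ as $\tilde{\Phi}(w) = x(z(w))$, so the new inhomogeneous coordinate is $\tilde{x}(w) = (x\circ z)(w)$ and, by the very same formula applied in the variable $w$, the transformed curvature is $\tilde{\kappa}_0 = -\tfrac12 S\tilde{x}$, with derivatives now taken with respect to $w$.

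The heart of the argument is therefore the composition law
\[
S(x \circ z) = \bigl((S x)\circ z\bigr)\,(z')^2 + S z.
\]
I would establish this directly: writing $\tilde{x}' = (x'\circ z)\,z'$ and differentiating twice more by the chain rule and the product rule, one expresses $\tilde{x}''$ and $\tilde{x}'''$ in terms of $x'\circ z,\ x''\circ z,\ x'''\circ z$ and $z', z'', z'''$. Substituting these into $\tilde{x}'''/\tilde{x}' - \tfrac32(\tilde{x}''/\tilde{x}')^2$ and simplifying, the factors $x'\circ z$ cancel and the cross terms combine so that precisely $\bigl((S x)\circ z\bigr)(z')^2$ and $S z$ survive.

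This is the one genuinely computational step, and the main obstacle is bookkeeping: one must keep the two meanings of the prime (differentiation with respect to $z$ versus $w$) distinct and verify the cancellation of the $z''$ cross terms that produces the $-\tfrac32(z''/z')^2$ piece of $S z$. Granting the composition law, the conclusion is immediate. Multiplying the identity by $-\tfrac12$ and using that $-\tfrac12 (Sx)\circ z = \kappa_0$ along $z = z(w)$ gives
\[
\tilde{\kappa}_0 = -\frac12 S\tilde{x} = (z')^2\Bigl(-\frac12 (Sx)\circ z\Bigr) - \frac12 S z = {z'}^2 \kappa_0 - \frac12 S z,
\]
which is exactly (\ref{eq:kappa0trans}).

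An alternative, entirely self-contained route avoids naming the cocycle identity and instead substitutes the chain-rule expressions for $\tilde{x}', \tilde{x}'', \tilde{x}'''$ straight into $\tilde{\kappa}_0 = \tfrac34(\tilde{x}''/\tilde{x}')^2 - \tfrac12(\tilde{x}'''/\tilde{x}')$; this yields the same result but hides the structural reason, namely that $S$ transforms by multiplication by $(z')^2$ together with an additive Schwarzian correction under reparameterisation. I would favour the cocycle formulation, since it makes the two terms of (\ref{eq:kappa0trans}) transparent and generalises cleanly to the higher-dimensional transformation rules considered afterwards.
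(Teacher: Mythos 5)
Your proposal is correct and its computational core is the same as the paper's: the paper's proof substitutes the chain-rule expressions for $\tilde{x}'$, $\tilde{x}''$, $\tilde{x}'''$ directly into $\frac{3}{4}(\tilde{x}''/\tilde{x}')^2 - \frac{1}{2}(\tilde{x}'''/\tilde{x}')$ and simplifies, which is exactly your ``alternative, self-contained route.'' Your preferred packaging via the cocycle identity $S(x\circ z) = ((Sx)\circ z)\,(z')^2 + Sz$ is a clean reorganization rather than a new argument, and in fact that identity is already established in the paper's appendix (in the computation proving Lemma \ref{sec:Scomposition}), so your route would let one cite that lemma and avoid repeating the chain-rule bookkeeping altogether.
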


\begin{proof}
Recall the formula for $\kappa_0$ in equation (\ref{eq:kappazero})

\[
\kappa_0 = \frac{3}{4}\left( \frac{x''(z)}{x'(z)}\right)^2 - \frac{1}{2}\left( \frac{x'''(z)}{x'(z)}\right).
\]
Instead of $\Phi = x(z)$ that we started with, we use $\tilde{\Phi} = \tilde{x}(w) = x(z(w))$ which simply is the same curve after change of coordinates. Then we calculate
\[
\left\{ \begin{array}{l}
\tilde{x}'(w) = x'z' , \\
 \\
\tilde{x}''(w) = x''{z'}^{2} + z'' x' , \\
 \\
\tilde{x}'''(w) = x'''{z'}^{3} + 3x''z'z'' + x'z''' . 
\end{array} \right.
\]

\begin{align*}
\intertext{and}
\tilde{\kappa}_0 &= - \frac{1}{2} S  \tilde{x}(w) = \frac{3}{4}\left( \frac{\tilde{x}''(z)}{\tilde{x}'(z)}\right)^{2} - \frac{1}{2}\left( \frac{\tilde{x}'''(z)}{\tilde{x}'(z)}\right) \\
 \\
&= \frac{3}{4}\frac{(x'' {z'}^{2} +z'' x' )^{2} }{ {x'}^{2} {z'}^{2} }-\frac{2}{4}\frac{(x''' {z'}^{3} + 3 x'' z' z'' +  x' z''' ) x' z' }{ {x'}^{2} {z'}^{2} } \\
 \\
&= \frac{3 {x''}^{2} {z'}^{4} - 2 x' x''' {z'}^{4} }{ 4 {x'}^{2} {z'}^{2} } +\frac{ 6 x' x'' {z'}^{2} z'' - 6 x' x'' {z'}^{2} z''}{4 {x'}^{2} {z'}^{2} }+\frac{ 3 {z''}^{2} {x'}^{2} - 2 {x'}^{2} z' z''' }{ 4 {x'}^{2} {z'}^{2} } \\
 \\
&= {z'}^{2} \kappa_0 - \frac{1}{2} S z. \quad \blacksquare
\end{align*}
\end{proof}

In the same way we calculate $\tilde{\kappa}_0$ and $\tilde{\kappa}_1$ for curves in $\mathbb{CP}^2$. We use the formulas obtained in equations (\ref{eq:kappa0}) and (\ref{eq:kappa1}) and since the coordinates are changed, we replace the $\sigma$'s by $\tilde{\sigma}$'s, where the components of $\tilde{\Phi}$ are derived with respect to the $w$'s ( Note that $\tilde{\Phi} = (x(z(w)), y(z(w)))$ ). Then we calculate the various $\sigma$'s for the curve after change of coordinates, denoted by $\tilde{\sigma}$'s.
\begin{align*}
\tilde{\sigma}_{12} & = {z'}^{3} (\sigma_{12}), \\
 \\
\tilde{\sigma}_{13} & = {z'}^{4} (\sigma_{13}) + 3 {z'}^{2} z'' (\sigma_{12}), \\
 \\
\tilde{\sigma}_{23} & = {z'}^{5} (\sigma_{23}) + {z'}^{3} z'' (\sigma_{13}) - {z'}^{2} z''' (\sigma_{12}) + 3 z' {z''}^{2} (\sigma_{12}), \\ 
 \\
\tilde{\sigma}_{14} & = {z'}^{5} (\sigma_{14}) + 6 {z'}^{3} z'' (\sigma_{13}) + 4 {z'}^{2} z''' (\sigma_{12}) + 3 z' {z''}^{2} (\sigma_{12}), \\
 \\
\tilde{\sigma}_{24} & = {z'}^{6} (\sigma_{24}) + 6 {z'}^{4} z'' (\sigma_{23}) + {z'}^{4} z'' (\sigma_{14}) + 6 {z'}^{2} {z''}^{2} (\sigma_{13}) \\
                    & - {z'}^{2} z^{(4)} (\sigma_{12}) + 4 z' z'' z''' (\sigma_{12}) + 3 {z''}^{2} (\sigma_{12}), \\
 \\
\tilde{\sigma}_{15} & = {z'}^{6} (\sigma_{15}) + 10 {z'}^{4} z'' (\sigma_{14}) + 10 {z'}^{3} z''' (\sigma_{13}) + 15 {z'}^{2} {z''}^{2} (\sigma_{13}) \\
                    & + 5 {z'}^{2} {z}^{(4)} (\sigma_{12}) + 10 z' z'' z''' (\sigma_{12}). \\
\end{align*}
The $\tilde{\kappa}$'s can now be written in terms of $\tilde{\sigma}$'s and the simplified expressions are the following:

\begin{align*}
\tilde{\kappa}_{0} & = {z'}^{3}\kappa_{0} + z' z'' \kappa_{1} - S z , \\
\intertext{and}
\tilde{\kappa}_{1} & = {z'}^{2} \kappa_{1} - 2 S z .
\end{align*} 
Where $S z$ is the Schwarzian derivative of the change of coordinates $z=z(w)$.

\clearpage{\pagestyle{empty}\cleardoublepage}

\begin{appendix}
\section{Elementary Definitions and Properties}
We present some elementary notions that we use for describing behaviours and properties of functions and mappings. We will start with introducing the notion of a topological space, since the study of manifolds involves topology. The metric properties and the notion of distance are not included.

\begin{defn}
Let $X$ be an set and let $\mathcal{T} =\{ U_i : i\in I \}$ denote a certain collection of subsets of $X$. The pair $(X,\mathcal{T} )$is called a topological space if $\mathcal{T}$ satisfies the following requirements:
\begin{enumerate}
\item $\emptyset , X \in\mathcal{T}$.
\item If $J$ is any sub-collection of $I$, the family $\{ U_j : j\in J \}$ satisfies 
\[ \bigcup_{j\in J} {U_j}\subset\mathcal{T}.\]
\item If $K$ is any finite sub-collection of $I$, the family $\{ U_k : k\in K\}$ satisfies 
\[ \bigcap_{k\in K} {U_k}\subset\mathcal{T}.\] 
\end{enumerate}
The elements of $\mathcal{T}$ are called open sets.
\end{defn}

\begin{remark}
$X$ alone is often called a topological space and $\mathcal{T}$ is said to give a topology to $X$.
\end{remark}

\begin{defn}
Let $X_1$ and $X_2$ be topological spaces and let $f$ be a function from $X_1$ to $X_2$. Then $f$ is continuous at the point $a$ of its domain if for every open set $O_{b} \in X_2$ which contains $b = f(a)$ there is an open set $O_{a} \in X_1$ which contains $a$ and is such that $f(x) \in O_{b}$ for every $x \in O_{a}$, or in other words $f(O_{a}) \subset O_{b}$.
\end{defn}

\begin{defn}
A set $N_x$ such that a given point $x$ is contained in the interior of $N_x$ is called a neighborhood of the point $x$. 
\end{defn}

\begin{defn}
Let $X_1 $ and $X_2$ be topological spaces. A map $f:X_1 \longrightarrow X_2$ is a homeomorphism if it is continuous and has an inverse $f^{-1}:X_2 \longrightarrow  X_1$ which is also continuous. If there exists a homeomorphism between two topological spaces, we say that they are homeomorphic.
\end{defn}

\begin{defn}
A topological space $X$ is connected if it cannot be written as $X = X_1  \cup X_2$, where $X_1$ and $X_2$ are both open and non--empty and $ X_1 \cap X_2 \neq \emptyset $. Otherwise $X$ is called disconnected.
\end{defn}

\begin{defn}
A loop in a topological space $X$ is a continuous map $f: [0,1] \longrightarrow X$, such that $f(0)=f(1)$. If any loop in $X$ can be continuously shrunk to a point, $X$ is called simply connected.
\end{defn}

\begin{defn}
Let $X_1 $ and $X_2$ be topological spaces. A bijective map $f:X_1 \longrightarrow  X_2$ is a $C^k$--diffeomorphism if both $f:X_1 \longrightarrow  X_2$ and its inverse $f^{-1}:X_2 \longrightarrow  X_1$ are $C^k$--functions.
\end{defn}

\begin{defn}
Let $X$ be a topological space. Given any pair of distinct points $a,b \in X$ there exist disjoint open sets $O_a$ and $O_b$ in $X$ such that $a\in O_{a}$ and $b \in O_{b}$. A topological space satisfying this axiom is called a Hausdorff space.
\end{defn}

By \real{n} we will denote the set of all ordered $n$--tuples with the usual vector operation. If $x,y \in \mathbb{R}^{n}$, then $x\cdot y$ will denote the usual scalar product and $|x|$ the Euclidean norm of $x$. \\

\begin{defn}
Let $x_1 \in \mathbb{R}^n$. An affine transformation of \real{n} is a map $ F : \mathbb{R}^n \longrightarrow \mathbb{R}^n $ of the form
\begin{align*}
F(x)=Ax + x_1
\end{align*}
for all $ x \in \mathbb{R}^n $, where $A$ is a linear transformation of \real{n}.
\end{defn}

\begin{defn}
The Jacobian matrix of a function $f(x)$ will be denoted by $\mathcal{J}_{f(x)}$, and defined by
\[
\mathcal{J}_{f(x)}=\frac{\partial f(x_1 , \ldots , x_n)}{\partial (x_1 , \ldots , x_n)}.
\]
We use the notation $d_{a}f$ for the value of the Jacobi matrix at a certain point $a$.
\end{defn}

\begin{defn}
The Wronskian with respect to the functions $f_{0}(x) ,\ldots ,f_{n}(x)$ is denoted by $W(x)$ and defined by the following determinant.
\[
W(x)=det \, \left[ \begin{array}{ccc}
f_{0}(x)         & \ldots  &f_{n}(x)         \\
\vdots           &         &\vdots           \\
f_{0}^{(n)}(x)   & \ldots  &f_{n}^{(n)}(x)  
\end{array} \right].
\]
\end{defn}
One, for us, useful property of the Wronskian is that having the system of $(n+1)$st-order linear differential equation
\[
h_{n+1}f^{(n+1)}(z)+\ldots +h_{0}f(z)=0,
\]
then $f_{0}(x),\ldots ,f_{n}(x)$ is a fundamental system of solutions if and only if the Wronskian is different from zero. This is attained when $f_{0}(x),\ldots ,f_{n}(x)$ are linearly independent.

Some properties of differentiable functions, that are very useful in developing function theory and are used in various places in this thesis, are that $f+g$, $fg$ and $f/g$ are differentiable at a point $x\in [a,b]$, when $f$ and $g$ are defined on $[a,b]$ and are differentiable at $x$. These properties are valid for analyticity of functions as well.\\
Furthermore we define a $C^k$--function by a function that has continuous partial derivatives up to order $k$, and a $C^{\infty}$--function by a function that have continuous partial derivatives up to order $ \infty $, i.e. ${f_1^{(\infty )}}, \ldots , {f_n^{(\infty )}}$ are continuous.\\

\clearpage{\pagestyle{empty}\cleardoublepage}

\section{The Schwarzian Derivative}
We also describe the Schwarzian derivative that is a tool first introduced into the study of one-dimensional dynamical systems. The Schwarzian derivative plays a very important role in complex analysis and it is a valuable tool in one-dimensional dynamics.\\

\begin{defn}
The Schwarzian derivative of a function $f$ at a point $z$ is
\begin{equation}
Sf(z)=\frac{f'''(z)}{f'(z)}-\frac{3}{2}\left( \frac{f''(z)}{f'(z)} \right)^2
=\left( \frac{f''(z)}{f'(z)} \right)'-\frac{1}{2}\left( \frac{f''(z)}{f'(z)} \right)^2.
\end{equation}
\end{defn}

\begin{example}
\label{ex:ez}
Let $f(z)=e^z$, then the Schwarzian derivative of $f(z)$ can be calculated.
\begin{align*}
f'(z) &=e^z,\\
f''(z) &=e^z,\\
f'''(z) &=e^z.\\
\intertext{Thus the Schwarzian derivative is:}
Sf(z) &=-\frac{1}{2}.
\end{align*}
\end{example}

\begin{lemma}
Let $P(x)$ be a polynomial. If all of the roots $P'(x)$ are real and distinct, 
then \mbox{$SP(x)<0.$}\\
\end{lemma}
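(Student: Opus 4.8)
The plan is to use the second, factored form of the Schwarzian derivative recorded in its definition, namely $SP = \left(\frac{P''}{P'}\right)' - \frac{1}{2}\left(\frac{P''}{P'}\right)^2$, combined with the logarithmic-derivative (partial-fraction) expansion of $P''/P'$ that the distinct-real-roots hypothesis makes available. The whole argument reduces to reading off signs once this expansion is in hand.

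First I would observe that the statement is to be understood at the points where $SP$ is actually defined, i.e.\ where $P'(x)\neq 0$, and that we may take $\deg P \ge 2$ (if $P'$ is constant it has no roots, $P''=0$, and $SP$ vanishes identically). Writing the distinct real roots of $P'$ as $a_1,\dots,a_n$ with $n=\deg P-1\ge 1$, I would factor $P'(x)=c\prod_{i=1}^{n}(x-a_i)$ and take the logarithmic derivative. Because the roots are simple, this yields the clean expansion
\[
\frac{P''(x)}{P'(x)} = \sum_{i=1}^{n}\frac{1}{x-a_i}.
\]

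Next I would differentiate this identity to get $\left(\frac{P''}{P'}\right)' = -\sum_{i=1}^{n}\frac{1}{(x-a_i)^2}$, and substitute both expressions into the factored form of the Schwarzian, obtaining
\[
SP(x) = -\sum_{i=1}^{n}\frac{1}{(x-a_i)^2} \;-\; \frac{1}{2}\left(\sum_{i=1}^{n}\frac{1}{x-a_i}\right)^2.
\]
The conclusion is then immediate from the signs: at any real $x$ with $P'(x)\neq 0$ we have $x\neq a_i$ for every $i$, so the second term is $\le 0$, while the first term is strictly negative since $n\ge 1$ forces $\sum_i 1/(x-a_i)^2>0$. Hence $SP(x)<0$.

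I do not anticipate a serious obstacle; the content is essentially the logarithmic-derivative computation above. The only points deserving care are recording that $SP$ is being evaluated away from the zeros of $P'$, and pinpointing where the hypotheses enter: distinctness guarantees \emph{simple} poles so that $P''/P'$ has the clean form displayed, and reality of the roots guarantees that each $1/(x-a_i)^2$ is a genuine positive real contribution. It is worth remarking that multiplicities would not spoil the sign (one would instead get $\sum_i m_i/(x-a_i)$, still with negative derivative), so the distinctness hypothesis is more a matter of convenience than a logical necessity for the inequality.
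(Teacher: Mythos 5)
Your proof is correct and follows essentially the same route as the paper: both arguments rest on the partial-fraction expansion $P''/P' = \sum_i 1/(x-a_i)$ furnished by the distinct real roots, and then read off that $SP$ is a sum of nonpositive terms with at least one strictly negative. The only difference is cosmetic --- you use the form $SP=(P''/P')'-\tfrac12(P''/P')^2$ and differentiate once, whereas the paper computes $P'''/P'$ as a double sum and uses $SP=P'''/P'-\tfrac32(P''/P')^2$; your final expression $-\sum_i (x-a_i)^{-2}-\tfrac12\bigl(\sum_i (x-a_i)^{-1}\bigr)^2$ is in fact the one with the correct coefficients, and your remarks on the degenerate linear case and on multiplicities are sound.
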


\begin{proof}
\begin{align*}
P'(x) &=\prod_{i=1}^{N}(x-a_i),\\
 \\
P''(x) &=\sum_{j=1}^{N}\frac{P'(x)}{x-a_j}=\sum_{j=1}^{N}\frac{\prod_{i=1}^{N}(x-a_i)}{x-a_j},\\
 \\
P'''(x) &=\sum_{k=1}^{N}\frac{P''(x)}{x-a_k}=\sum_{j=1}^{N}\,\,\sum_{k=1 \atop k \neq j}^{N}\frac{\prod_{i=1}^{N}(x-a_i)}{(x-a_j)(x-a_k)}.\\
\intertext{Hence we have:}
SP(x) &=\sum_{k\neq j}\left( \frac{1}{(x-a_j)(x-a_k)}\right)-\frac{3}{2}\left( \sum_{j=1}^{N}\frac{ 1}{x-a_j} \right)^2=\sum_{k\neq j}\left( \frac{1}{(x-a_j)(x-a_k)}\right)\\
 \\
&-\frac{3}{2}\left( \sum_{j=1}^{N}\frac{1} {x-a_j} \right)^2-\frac{1}{2}\left( \sum_{j=1}^{N}\frac{1}{x-a_j} \right)^2+\frac{1}{2}\left( \sum_{j=1}^{N}\frac{1 }{x-a_j} \right)^2\\
 \\
&=-\frac{1}{2}\sum_{j=1}^{N}\left(\frac{1}{x-a_j}\right)^2-\left(\sum_{j=1}^{N}\frac{1}{x-a_j}\right)^2<0.\quad \blacksquare
\end{align*}
\end{proof}
One of the most important properties of functions which have negative Schwarzian derivative is the fact that this property is preserved under composition.

\begin{lemma}
\label{sec:Scomposition}
Suppose $Sf(x)<0$, and $Sg(x)<0$, then the Schwarzian derivative of the composition $S(f\circ g)<0.$\\
\end{lemma}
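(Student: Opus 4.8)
The plan is to establish the composition rule (chain rule) for the Schwarzian derivative and then simply read off the sign. The key identity I would prove is
\[
S(f \circ g)(x) = \big(Sf\big)(g(x)) \, \big(g'(x)\big)^2 + Sg(x).
\]
Once this is available the conclusion is immediate: by hypothesis $Sg(x) < 0$ and $(Sf)(g(x)) < 0$, while $(g'(x))^2 \geq 0$, so the first summand is $\leq 0$ and the second is strictly negative; their sum $S(f\circ g)(x)$ is therefore strictly negative.

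First I would set $h = f \circ g$ and differentiate three times using the ordinary chain and product rules, abbreviating $f',f'',f'''$ evaluated at $g(x)$ by $f'\circ g$, etc. This gives
\begin{align*}
h' &= (f'\circ g)\, g', \\
h'' &= (f''\circ g)\,(g')^2 + (f'\circ g)\, g'', \\
h''' &= (f'''\circ g)\,(g')^3 + 3(f''\circ g)\, g' g'' + (f'\circ g)\, g'''.
\end{align*}

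Next I would form the two ratios appearing in the definition of the Schwarzian, namely $h'''/h'$ and $h''/h'$, dividing through by the common factor $(f'\circ g)\, g'$. The crucial observation is that the mixed term $3\,\dfrac{f''\circ g}{f'\circ g}\, g''$ coming from $h'''/h'$ is cancelled exactly by the corresponding cross term arising in $\frac{3}{2}(h''/h')^2$; after this cancellation the surviving terms regroup precisely as $(Sf\circ g)(g')^2$ together with $Sg$, which yields the displayed identity.

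The hard part is purely the bookkeeping in this cancellation: one must expand $(h''/h')^2$ carefully and sort the terms into those carrying a factor $(g')^2$, those carrying $g''$, and those built only from derivatives of $g$, so that the two groups separate cleanly into the $(Sf\circ g)(g')^2$ piece and the $Sg$ piece. Beyond this there are no analytic subtleties, provided we assume (as is implicit in the statement) that $f'$ and $g'$ are nonvanishing where needed, so that all the ratios are defined.
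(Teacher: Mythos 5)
Your proposal is correct and follows essentially the same route as the paper: both derive the composition formula $S(f\circ g)(x) = (Sf)(g(x))\,(g'(x))^2 + Sg(x)$ by computing the first three derivatives of $f\circ g$ and then read off the sign. (Your version is in fact slightly more careful than the paper's, which writes $Sf(x)$ rather than $(Sf)(g(x))$ in the final line, and you rightly note the implicit nonvanishing of $g'$ needed for $Sg$ to be defined, which also upgrades the first summand from $\leq 0$ to $<0$.)
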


\begin{proof}
\begin{align*}
(f\circ g)' &= g'(x)\cdot f'(g(x))\\
 \\
(f\circ g)'' &=g''(x)\cdot f'(g(x))+(g'(x))^2 \cdot f''(g(x))\\ 
 \\
(f\circ g)''' &= f'''(g(x))\cdot (g'(x))^3 + g'''(x)\cdot f'(g(x)) + 3f''(g(x))\cdot g''(x)\cdot g'(x)\\
\intertext{Thus the Schwarzian derivative is:}
S(f\circ g) &= \left( \frac{f'''{g'}^3 +g'''f' +3f''g''g'}{g'f'}\right)-\frac{3}{2}\left(\frac{g''f' +f''{g'}^2}{g'f'}\right)^2\\
 \\
&=\frac{2f'''f'{g'}^4 +2g'''g'{f'}^2 +6f''g''f'{g'}^2 -3{g''}^2{f'}^2 -3{f''}^2{g'}^4 -6g''f''f'{g'}^2}{2{g'}^2 {f'}^2}\\
 \\
&=\frac{2f'''f'{g'}^2 - 3{f''}^2{g'}^2}{2{f'}^2} +\frac{2g'''g' -3{g''}^2}{2{g'}^2} \\
 \\
&=Sf(x)\cdot (g'(x))^2 +Sg(x) <0.\quad \blacksquare
\end{align*}
\end{proof}
Another very important property of the Schwarzian derivative is its invariance under fractional-linear transformations, i.e. M\"{o}bius transformations. 

\begin{theorem}
The Schwarzian derivative of a function $f(z)$ is invariant under M\"{o}bius transformations $w=\frac{az+b}{cz+d}$, i.e. $S(w\circ f)(z)=Sf(z)$.\\
\end{theorem}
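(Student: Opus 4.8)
The plan is to reduce the statement to the chain rule for the Schwarzian derivative that was already established in the course of proving Lemma \ref{sec:Scomposition}. There, in computing $S(f\circ g)$, the identity
\[
S(f\circ g)(x) = Sf(g(x))\cdot (g'(x))^{2} + Sg(x)
\]
was obtained as a purely algebraic consequence of the first three derivatives of a composition, without ever invoking the sign hypotheses; hence it holds as a general identity. Applying it with the outer function $w$ and inner function $f$ gives
\[
S(w\circ f)(z) = Sw(f(z))\cdot (f'(z))^{2} + Sf(z),
\]
so everything reduces to showing that a M\"{o}bius transformation has vanishing Schwarzian derivative.

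The second step, which is the computational heart, is to verify $Sw \equiv 0$ for $w(z)=\frac{az+b}{cz+d}$ with $ad-bc\neq 0$. Writing $\Delta = ad-bc$, I would differentiate directly to obtain $w' = \Delta (cz+d)^{-2}$, then $w'' = -2c\,\Delta (cz+d)^{-3}$, and $w''' = 6c^{2}\Delta (cz+d)^{-4}$. Forming the two ratios that appear in the definition gives $w''/w' = -2c/(cz+d)$ and $w'''/w' = 6c^{2}/(cz+d)^{2}$, whence
\[
Sw = \frac{6c^{2}}{(cz+d)^{2}} - \frac{3}{2}\cdot\frac{4c^{2}}{(cz+d)^{2}} = 0.
\]
Substituting $Sw = 0$ into the composition identity yields $S(w\circ f)(z) = Sf(z)$, which is the desired claim.

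There is no serious obstacle here; the only point requiring care is bookkeeping. One must record the nonsingularity condition $ad-bc\neq 0$, both because it is what makes $w$ an honest M\"{o}bius transformation and because it guarantees $w'\neq 0$, so that $Sw$ is defined and the divisions by $w'$ above are legitimate. It is also worth noting, as a sanity check, that $\Delta$ cancels completely in the computation, reflecting the fact that the invariance holds for every admissible choice of coefficients $a,b,c,d$. An alternative route would be to expand $S(w\circ f)$ directly from the definition, but this is far messier, and routing the argument through the previously derived composition identity makes it transparent.
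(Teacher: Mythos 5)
Your proposal is correct and follows essentially the same route as the paper: both invoke the composition identity $S(w\circ f)=Sw\cdot (f')^{2}+Sf$ extracted from the proof of Lemma \ref{sec:Scomposition} and then verify by direct differentiation that $Sw\equiv 0$ for a M\"{o}bius transformation. Your version is marginally more careful in evaluating $Sw$ at $f(z)$ and in recording the hypothesis $ad-bc\neq 0$, but the argument is the same.
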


\begin{proof}
Comparing with $S(f\circ g)$ from lemma \ref{sec:Scomposition}, we see that $S(w\circ f)(z)\\
=Sw(z)\cdot(f'(z))^2+Sf(z).$  Moreover we have:

\begin{align*}
w'(z) &=\frac{ad-bc}{(cz+d)^2}\\
 \\
w''(z) &=\frac{-2c(ad-bc)}{(cz+d)^3}\\
 \\
w'''(z) &=\frac{6c^2(ad-bc)}{(cz+d)^4}\\
\intertext{The Schwarzian derivative of the transformation $w(z)$ is:}
Sw(z) &=\frac{w'''(z)}{w'(z)}-\frac{3}{2}\left( \frac{w''(z)}{w'(z)}\right)^2\\
 \\
&=\frac{6c^2(ad-bd)}{(cz+d)^4}\cdot \frac{(cz+d)^2}{(ad-bc)}\\
 \\
&-\frac{3}{2}\left( \frac{-2c(ad-bc)}{(cz+d)^3}\cdot \frac{(cz+d)^2}{(ad-bc)}\right)^2=0\\
\intertext{Thus $Sw(z)=0$ and  we obtain the desired formula, $S(w\circ f)(z)=Sf(z)$.\quad $\blacksquare$}
\end{align*}
\end{proof}

\end{appendix}

\clearpage{\pagestyle{empty}\cleardoublepage}

\nocite{*}
\bibliographystyle{plain}
\bibliography{references}

\end{document}